\documentclass[12pt,a4paper]{amsart}
\usepackage{amssymb,enumerate,amsmath}

\textwidth160mm
\oddsidemargin5mm
\evensidemargin5mm
\usepackage[pagebackref]{hyperref}

\newcommand{\FF}{{\mathbb{F}}}
\newcommand{\QQ}{{\mathbb{Q}}}

\newcommand{\bB}{\mathbf B}
\newcommand{\bF}{\mathbf F}
\newcommand{\bG}{\mathbf G}
\newcommand{\bH}{\mathbf H}
\newcommand{\bK}{\mathbf K}
\newcommand{\bL}{\mathbf L}
\newcommand{\bN}{\mathbf N}
\newcommand{\bO}{{\mathbf O}}
\newcommand{\bT}{\mathbf T}
\newcommand{\bU}{\mathbf U}
\newcommand{\bZ}{\mathbf Z}

\newcommand{\cE} {\mathcal{E}}

\newcommand{\fA} {\mathfrak A}
\newcommand{\fS} {\mathfrak S}

\newcommand{\Aut}{{\operatorname{Aut}}}
\newcommand{\diag}{{\operatorname{diag}}}
\newcommand{\Ind}{{\operatorname{Ind}}}
\newcommand{\Inn}{{\operatorname{Inn}}}
\newcommand{\Irr}{{\operatorname{Irr}}}
\newcommand{\mh}{{\operatorname{mh}}}
\newcommand{\Out}{{\operatorname{Out}}}
\newcommand{\SC}{{\operatorname{sc}}}
\newcommand{\tA}{\operatorname{A}}
\newcommand{\tB}{\operatorname{B}}
\newcommand{\tC}{\operatorname{C}}
\newcommand{\tD}{\operatorname{D}}
\newcommand{\tE}{\operatorname{E}}
\newcommand{\tF}{\operatorname{F}}
\newcommand{\tG}{\operatorname{G}}

\newcommand{\GL}{\operatorname{GL}}
\newcommand{\SL}{\operatorname{SL}}
\newcommand{\PGL}{\operatorname{PGL}}
\newcommand{\PSL}{\operatorname{PSL}}
\newcommand{\SU}{\operatorname{SU}}
\newcommand{\PSU}{\operatorname{PSU}}

\newcommand{\wt}{\widetilde}

\newcommand{\tw}[1]{{}^#1\!}
\newcommand{{\tchi}}{\tilde\chi}
\newcommand{{\tbG}}{{\widetilde{\bG}}}
\newcommand{{\tiG}}{{\widetilde{G}}}
\renewcommand{\pmod}[1]{~({\rm mod}~#1)}

\let\al=\alpha
\let\eps=\epsilon
\let\ga=\gamma
\let\la=\lambda
\let\si=\sigma
\let\vhi=\varphi
\let\ze=\zeta

\newtheorem{thm}{Theorem}[section]
\newtheorem{lem}[thm]{Lemma}
\newtheorem{cor}[thm]{Corollary}
\newtheorem{prop}[thm]{Proposition}

\newtheorem{thmA}{Theorem}
\newtheorem{conjA}[thmA]{Conjecture}

\theoremstyle{definition}
\newtheorem{rem}[thm]{Remark}

\numberwithin{equation}{section}

\raggedbottom

\usepackage{hyperref}

\begin{document}

\title[Minimal heights in blocks]{On minimal positive heights for blocks\\ of almost quasi-simple groups}

\author{Gunter Malle}
\address[G. Malle]{FB Mathematik, RPTU, Postfach 3049,
  67653 Kaisers\-lautern, Germany.}
\email{malle@mathematik.uni-kl.de}

\author{A. A. Schaeffer Fry}
\address[A. A. Schaeffer Fry]{Dept. Mathematics - University of Denver, Denver
 CO 80210, USA}
\email{mandi.schaefferfry@du.edu}

\begin{abstract}
The Eaton--Moret\'o conjecture extends the recently-proven Brauer height zero
conjecture to blocks with non-abelian defect group, positing equality between
the minimal positive heights of a block of a finite group and its defect
group.  Here we provide further evidence for the inequality in this conjecture
that is not implied by Dade's conjecture.
Specifically, we consider minimal counter-examples and show that these cannot
be found among almost quasi-simple groups for $p\ge5$. Along the way, we
observe that most such blocks have minimal positive height equal to~1. 
\end{abstract}

\thanks{The first author gratefully acknowledges financial support by the DFG
-- Project-ID 286237555.
The second author gratefully acknowledges support from the National Science
Foundation, Award No. DMS-2100912, and her former institution, Metropolitan
State University of Denver, which held the award and allowed her to serve as PI}

\keywords{Eaton--Moret\'o conjecture, heights of characters, Brauer blocks, finite groups of Lie type}

\subjclass[2020]{Primary 20C15, 20C20, 20C33}

\date{\today}

\maketitle


\section{Introduction}   \label{sec:intro}
In this paper, we are concerned with a conjecture of Charles Eaton and Alex
Moret\'o \cite{EM14} on the $p$-parts of character degrees of finite groups,
where $p$ is a prime number. This conjecture can be considered as an extension
of Richard Brauer's height zero conjecture (whose proof was recently completed
in \cite{MNST}) to the case of non-abelian defect groups.

In order to formulate the Eaton--Moret{\'o} conjecture, we introduce some
notation. For $D$ a (finite) $p$-group we write
$$\mh(D):=\min\{\infty,\log_p(\chi(1))\mid \chi\in\Irr(D),\,\chi(1)>1\}$$
for the minimal height of a non-linear complex irreducible character of $D$,
if such exists, and $\infty$ otherwise. Note that the latter occurs exactly
when  $D$ is abelian. Analogously, for $B$ a Brauer $p$-block of a finite
group~$G$, we let $\mh(B)$ denote the minimal positive height of an irreducible
complex character in $B$, if such a character exists, and $\infty$ otherwise.
Here recall that the height $h$ of $\chi\in\Irr(B)$ is defined by
$(\chi(1)|D|/|G|)_p=p^h$, where $D$ is any defect group of $B$.

Brauer's height zero conjecture (now a theorem) states that for a
$p$-block $B$ of a finite group with defect group $D$, all characters in $B$
have height zero if and only if $D$ is abelian. In the terminology above, this
can be stated as $\mh(B)=\infty$ if and only if $\mh(D)=\infty$. In 2014,
Eaton and Moret\'o \cite{EM14} conjectured that these invariants agree in
general:

\begin{conjA}   \label{conj}
 Let $B$ be a $p$-block of a finite group with defect group $D$.
 Then $\mh(B)=\mh(D)$.

\end{conjA}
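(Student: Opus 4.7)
The plan is to split the desired equality $\mh(B)=\mh(D)$ of Conjecture~\ref{conj} into the two inequalities $\mh(B)\le\mh(D)$ and $\mh(B)\ge\mh(D)$. The now-proven Brauer height zero conjecture disposes of the case $\mh(D)=\infty$ (in which $D$ is abelian and $B$ has only height-zero characters), so from the outset I may restrict to $D$ non-abelian.

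For the inequality $\mh(B)\le\mh(D)$, I would appeal to the projective form of Dade's conjecture, which relates character counts in $B$ to alternating sums over blocks of normalisers of $p$-chains, and which, where available, produces for each non-linear $\psi\in\Irr(D)$ a corresponding character of $B$ of height at most $\log_p\psi(1)$; roughly, every non-linear character of the defect group lifts through a Brauer correspondent to a character of the block. This direction is thus reducible to hypotheses already in force for many families, and is the one the abstract refers to as being implied by Dade.

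For the opposite inequality $\mh(B)\ge\mh(D)$, the strategy is a minimal counter-example reduction. Using Fong--Reynolds, together with a Clifford-theoretic analysis in the style of the Navarro--Sp\"ath reductions for other local-global conjectures, one aims to force a minimal counter-example $(G,B)$ into an almost quasi-simple shape, where the quasi-simple layer controls the block. Once this is achieved, the remaining task is a case-by-case verification in almost quasi-simple groups using CFSG: inspect character tables for sporadic and small alternating groups, and for groups of Lie type in non-defining characteristic use Jordan decomposition of characters together with the Bonnaf\'e--Dat--Rouquier Morita equivalences to reduce an arbitrary block to a quasi-isolated block of a smaller group of Lie type, whose defect group and character degrees are explicit enough to read off $\mh(B)$ and compare it with $\mh(D)$.

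The main obstacle is the last step for groups of Lie type at small primes $p\in\{2,3\}$ and in the defining characteristic: the defect groups can be Sylow $p$-subgroups of exceptional groups, for which $\mh(D)$ is delicate to compute, and the Morita-type reductions available for $p\ge 5$ degenerate. This is exactly the range in which the present paper operates. The observation flagged in the abstract---that most almost quasi-simple blocks in fact already have $\mh(B)=1$---should help simplify matters: once $\mh(D)=1$ is established, it suffices to exhibit a single character of $B$ of height exactly~$1$, which is generically provided by a Lusztig series of semisimple characters whose degree is divisible by $p$ exactly once.
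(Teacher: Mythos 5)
You have been asked to ``prove'' Conjecture~\ref{conj}, but this is the open Eaton--Moret\'o conjecture; the paper does not prove it and makes no claim to. What the paper actually establishes are Theorems~\ref{thm:quasi}, \ref{thm:aut} and~\ref{thm:qiIntro}: partial evidence for the inequality $\mh(B)\le\mh(D)$, obtained by showing that (almost) quasi-simple groups cannot be minimal counter-examples to that direction (for $p\ge5$ in the almost quasi-simple case). So there is no ``paper's own proof'' to match; your text has to be judged as a proposed programme.

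Two steps of that programme are wrong as stated, and they are the substantive ones. First, you have the two inequalities attached to Dade's conjecture interchanged. As the introduction records (following \cite{EM14}), Dade's projective conjecture would yield $\mh(B)\ge\mh(D)$, not $\mh(B)\le\mh(D)$; your heuristic that each non-linear $\psi\in\Irr(D)$ lifts to a character of $B$ of height at most $\log_p\psi(1)$ is not what the alternating-sum form of Dade's conjecture provides. The direction that genuinely needs new input, and the one this paper works on, is $\mh(B)\le\mh(D)$. Second, for that direction you presuppose a Navarro--Sp\"ath style reduction of the problem to almost quasi-simple groups. No such reduction is known; the paper says explicitly that the work in \cite{MMR,N24} ``indicates how difficult an eventual reduction to simple groups might be for this direction of the conjecture, even for the principal block case.'' That is exactly why the results are phrased as ``not a minimal counter-example'' rather than as a verification of the conjecture for almost quasi-simple groups. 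Treating the reduction as essentially available is the central gap in your plan.

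The final part of your sketch --- Jordan decomposition, Bonnaf\'e--Dat--Rouquier Morita equivalences to pass to quasi-isolated blocks, GAP/Atlas inspection for sporadic and alternating covers, and the observation that one generically finds $\mh(B)=1$ by exhibiting a character of height exactly one in a suitable Lusztig series --- does line up with Sections~\ref{sec:nonlie}--\ref{sec:crosschar} and with Theorem~\ref{thm:qiIntro}. But those ingredients prove the paper's partial results, not Conjecture~\ref{conj} itself, and they only bear on the quasi-simple layer; the global reduction they would have to be fed into is the open problem.
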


This has meanwhile been verified in several cases and for various classes of
groups and types of blocks (see e.g.~\cite{EM14,BM15,FLZ,MMR,N24}) but remains
open in general, even for solvable groups.  In \cite{EM14} it was further shown
that Dade's projective conjecture, widely believed to be true and which has
been reduced to a question on simple groups, would imply one part of
Conjecture~\ref{conj}, namely that $\mh(B)\ge\mh(D)$. For this reason,
providing further evidence for the reverse inequality seems the most pertinent.
The work in \cite{MMR, N24} indicates how difficult an eventual reduction to
simple groups might be for this direction of the conjecture, even for the
principal block case.

In this paper, we focus on a minimal counter-example to the
``$\mh(B)\leq\mh(D)$''-direction of Conjecture \ref{conj}. We obtain the
following results:

\begin{thmA}   \label{thm:quasi}
 Let $B$ be a $p$-block of a finite quasi-simple group. Then $B$ is not a
 minimal counter-example to the direction $\mh(B)\le\mh(D)$ of
 Conjecture~\ref{conj}.
\end{thmA}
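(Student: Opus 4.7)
The plan is to prove the stronger assertion that every $p$-block $B$ of a finite quasi-simple group $G$ with defect group $D$ satisfies $\mh(B)\le\mh(D)$ outright, so that $B$ is automatically not a counter-example. First I would reduce: if $D$ is abelian, then by the (now-proven) Brauer height zero conjecture $\mh(B)=\mh(D)=\infty$, so one may assume $D$ is non-abelian, which forces $\mh(D)\ge 1$. It then suffices to exhibit an irreducible character in $B$ of height exactly~$1$, matching the remark in the abstract that most such blocks have $\mh(B)=1$. I would also pass freely between $G$ and its simple quotient $G/Z(G)$ via Clifford theory for the central extension, so that the real work reduces to blocks of simple groups.

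I would then split according to the classification of finite simple groups. For the alternating groups and their Schur covers, the Nakayama/Morris rule parametrises $p$-blocks by $p$-cores (or $p$-bar-cores), the defect groups are products of wreath products of cyclic $p$-groups, and hook-length formulas readily exhibit a partition in the block whose hook-length product carries exactly one more factor of $p$ than some height-$0$ partition in the same block. For the sporadic simple groups, the Tits group, and small exceptional covering groups, a finite check using the GAP character-table library finishes the job. For groups of Lie type in their defining characteristic, Humphreys' description shows every block has either defect zero or the full Sylow $p$-subgroup as defect group, and in the latter case an explicit ordinary character of height~$1$ is produced from the known generic character-degree formulas.

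The heart of the proof is the case of groups of Lie type in non-defining characteristic $p=\ell$. Here I would invoke the Brou\'e--Michel and Cabanes--Enguehard classification of $\ell$-blocks via $e$-Harish-Chandra series, combined with Lusztig's Jordan decomposition, in order to transfer the question along the regular embedding $\bG^F\hookrightarrow\tbG^F$ and further, via Bonnaf\'e--Dat--Rouquier Morita equivalences, to unipotent or quasi-isolated blocks. Inside such a block one constructs a height-$1$ character by selecting a character in the relevant $e$-Harish-Chandra series whose generic degree polynomial contains exactly one more factor of $\Phi_e$ than the $e$-cuspidal head of the series. The principal difficulties will be the combination of bad primes for exceptional groups, very small-rank groups, and exceptional Schur multipliers, where the generic polynomial argument breaks down and one must argue block by block, for instance by consulting known decomposition matrices or direct character tables. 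A subsidiary technical point is to certify that the chosen character actually lies in the prescribed block $B$ and not in another block sharing the same defect group, requiring careful bookkeeping of $e$-cuspidal pairs under Jordan decomposition.
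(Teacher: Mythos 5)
Your plan sets out to prove the \emph{stronger} assertion that every block $B$ of a quasi-simple group satisfies $\mh(B)\le\mh(D)$ outright, rather than the literal statement that $B$ cannot be a \emph{minimal} counter-example. That distinction is not cosmetic, and it is precisely where your argument has a gap. In the non-defining-characteristic Lie type case, once $B$ is not quasi-isolated, the Bonnaf\'e--Rouquier (resp.\ Bonnaf\'e--Dat--Rouquier) Morita equivalence carries $B$ to a quasi-isolated block of a \emph{proper Levi subgroup} $L=\bL^F$ of $G$. Such an $L$ is reductive but in general neither simple, quasi-simple, nor a product of quasi-simple groups: it has a central torus and its derived group decomposes into several possibly different factors. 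Your CFSG-indexed induction ranges only over quasi-simple groups, so the target of the reduction falls outside the scope of your induction. This is exactly why the paper does not attempt the stronger claim: it invokes the Morita equivalence (which preserves heights and defect groups, \cite[Thm~B]{BR}, \cite[Thm~7.7]{BDR}) and observes that $|L|<|G|$, so $B$ cannot be \emph{minimal} among counter-examples in the class of all finite groups --- which is all the theorem requires and all that a reduction framework would need.

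A secondary point: ``it then suffices to exhibit a character of height exactly~$1$'' is a workable target for the Lie type cases, but it silently presumes $\mh(D)\ge 1$, which is of course automatic for non-abelian $D$; fine. However, for some decorated sporadic covers the paper finds $\mh(B)=2$ (e.g.\ the principal $3$-block of $3.ON$), so a ``find a height-$1$ character'' strategy is not universally available even after passing to central quotients, and one must separately verify $\mh(D)=2$ there. Also, passing ``freely'' between $G$ and $G/\bZ(G)$ needs care precisely when $p\mid|\bZ(G)|$, since then the blocks, their defect groups, and the heights change under deflation; the paper carefully distinguishes $B$, $B_S$ and $B_G$ for this reason. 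Your outline for the quasi-isolated case (Jordan decomposition, $e$-Harish-Chandra series, generic degree polynomials) is in the right spirit and close to Propositions~\ref{prop:unip class}, \ref{prop:unip exc} and~\ref{prop:isolatedclassicalnofield}, including the need to treat small-rank and bad-prime exceptions from Lemma~\ref{lem:unipdef} by hand. But the non-quasi-isolated case needs either the minimal-counter-example framing as in Theorem~\ref{thm:diag+fld}/Corollary~\ref{cor:nondefnoauts}, or a further unwinding of the Levi $L$ into its quasi-simple constituents with the appropriate tensor-product bookkeeping for heights and defect groups --- your proposal mentions neither.
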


We remark that this result for the case $p>5$ was stated in
\cite[Cor.~1.5]{FLZ} (except for proper covering groups of sporadic simple
groups). We also consider extensions by $p$-automorphisms (which will most
likely be relevant even if some kind of reduction of Conjecture~\ref{conj} to
decorated simple groups is available):

\begin{thmA}   \label{thm:aut}
 Let $p\ge5$ and let $B$ be a $p$-block of a finite group $A$ with $\bF^*(A)$
 quasi-simple. Then $B$ is not a minimal counter-example to the direction
 $\mh(B)\le\mh(D)$ of Conjecture~\ref{conj}.
\end{thmA}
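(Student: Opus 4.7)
The plan is to argue by contradiction: assume $B$ is a minimal counter-example, so $A$ has minimal order over all finite groups carrying a block $B'$ with $\mh(B')>\mh(D')$. Then the conjectured direction holds for every block of every proper subgroup of $A$, and by Theorem~\ref{thm:quasi} for every block of every finite quasi-simple group. Brauer's height zero theorem forces $D$ non-abelian (else $\mh(B)=\mh(D)=\infty$), so $\mh(D)\ge1$. Write $L=\bF^*(A)$, let $b$ be a block of $L$ covered by $B$, and let $d\le D$ be a defect group of $b$; the inductive hypothesis (or Theorem~\ref{thm:quasi} when $A=L$) yields $\mh(b)\le\mh(d)$.

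I would split into two cases according to whether $p$ divides $[A:L]$. In the easy case $p\nmid[A:L]$, one has $D=d$ up to $A$-conjugacy, so $\mh(D)=\mh(d)$, and standard Clifford-theoretic bookkeeping produces a character $\chi\in\Irr(B)$ over a minimal positive-height $\psi\in\Irr(b)$ of the same height as $\psi$: indeed $\chi(1)/\psi(1)$ equals $[A:T_A(\psi)]$ times the Clifford-correspondent degree, both $p'$-integers. Hence $\mh(B)\le\mh(b)\le\mh(D)$, contradicting minimality.

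The remaining case is $p\mid[A:L]$. Since $C_A(L)=Z(L)$, the quotient $A/L$ embeds into $\Out(L)$, whose structure mirrors that of $\Out(L/Z(L))$ up to bounded factors. A brief case analysis using $p\ge5$ --- namely $|\Out(L/Z(L))|\le2$ for sporadic type, $|\Out(L/Z(L))|\le4$ for alternating type, and a decomposition $\Delta\rtimes(\Phi\times\Gamma)$ with $|\Gamma|$ dividing $6$ for Lie type --- forces $L/Z(L)$ to be of Lie type, with the $p$-part of $A/L$ coming from diagonal and field automorphisms. The diagonal contribution is absorbed by passing to a regular embedding of $L$ (where diagonal automorphisms become inner), reducing to the quasi-simple case. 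Thus the essential new case is a $p$-power field automorphism. Here I would invoke the principle already announced in the abstract --- that most such blocks have $\mh(B)=1$ --- and produce an explicit height-one character of $B$: start from a height-one $\psi\in\Irr(b)$, whose existence is controlled through Lusztig's classification via an appropriate unipotent or semisimple character of degree divisible by exactly $p$, and extend it across the cyclic field-automorphism quotient. Cyclicity reduces extendibility to a manageable obstruction (Gallagher/Clifford), which in the generic case vanishes; the resulting extension lies in $B$ with height $1\le\mh(D)$, contradicting minimality.

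The main obstacle is the residue of exceptional configurations: blocks $b$ containing no height-one character (small-rank, small-defect, or isolated blocks, plus covering groups with exceptional Schur multipliers), and height-one characters $\psi$ that are not $A$-stable, where inducing the Clifford correspondent can inflate the height by the $p$-part of $[A:T_A(\psi)]$. The latter must be matched against a lower bound on $\mh(D)$ coming from the growth of Sylow $p$-subgroups from $L$ to $A$; the block-level exceptions require direct inspection of character tables or explicit Lusztig-series data for small-rank groups. The hypothesis $p\ge5$ is what keeps this residual list tractable: it eliminates all non-Lie-type $L$ at the outset, forces the outer $p$-subgroup of $\Out(L/Z(L))$ to be cyclic, and ensures $D/d$ cyclic, each of which considerably simplifies the Clifford-theoretic analysis.
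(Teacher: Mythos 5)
Your reduction framework and the identification of the hard case are sound: for $p\ge5$ the non-Lie-type groups are dispatched because $|\Out|$ is too small, the $p$-part of $\Out$ for Lie type consists of field (and, in type $\tA$, diagonal) automorphisms and is cyclic, and Lemma~\ref{lem:Op'}-style arguments handle the $p'$-index case. But the core of your plan for the field-automorphism case has a genuine gap. You propose to produce a height-one character of $B$ directly in ``the generic case,'' effectively attempting to prove $\mh(B)=1$ for \emph{all} $\ell$-blocks of groups of Lie type in non-defining characteristic $\ell$, modulo a finite residue of exceptions checkable by hand. The paper explicitly does \emph{not} achieve this and records it as an open question after Theorem~\ref{thm:qiIntro}. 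The non-quasi-isolated blocks are not a finite residual list: there are infinitely many of them, and no direct parametrisation by relative Weyl groups or $e$-Harish-Chandra series is used for them here. The paper's actual mechanism is different in kind: $\mh(B)=1$ is proved directly only for quasi-isolated (including unipotent) blocks, via Lemma~\ref{lem:unipdef}, Proposition~\ref{prop:unip class}, Proposition~\ref{prop:unip exc}, Proposition~\ref{prop:typeA}, Theorem~\ref{thm:isolated}, and Theorem~\ref{thm:qi diag+fld}; for a \emph{non}-quasi-isolated block one invokes the Bonnaf\'e--Rouquier theorem to get a Morita equivalence, preserving heights and defect groups and equivariant for field automorphisms, to a block of a strictly smaller group $L\langle\si\rangle$ --- whence $B$ is not a \emph{minimal} counter-example without ever exhibiting a low-height character. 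Your proposal has no analogue of this reduction, and without it the ``generic case'' claim is not currently provable.

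A second, smaller gap: your plan does not separate the defining-characteristic case from the cross-characteristic one. In defining characteristic the block theory is entirely different (every block has either defect zero or full defect, by Humphreys), and the construction of a $\sigma$-unstable or $\sigma$-stable height-one character is done via semisimple elements of the dual group lying in $[{G^*}^{\si^p},{G^*}^{\si^p}]$, not via $e$-Harish-Chandra theory; this is Section~\ref{sec:defchar} of the paper. Your sketch of ``unipotent or semisimple character of degree divisible by exactly $p$'' gestures in the right direction but would need to split into two genuinely different arguments. Finally, ``absorbing diagonal automorphisms by a regular embedding'' is a reasonable slogan, but the actual type-$\tA$ argument (Proposition~\ref{prop:typeA}) requires considerable work with the chain $G\le G_\ell\le\wt G$ and their central quotients, the $\si$-equivariant Jordan decomposition, and a careful bookkeeping of which semisimple labels are fixed or moved by $\al=\delta\si$; a regular embedding alone does not make these issues disappear.
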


In fact, we also prove partial results for $p=2,3$ on this question. Further,
in the case that $\bF^*(A)$ is quasi-simple not of Lie type in non-defining
characteristic, we obtain the full Conjecture~\ref{conj} for $A$ for all $p$.

Along the way, we find that in the situation of Theorem \ref{thm:aut}, it is
often the case that $\mh(B)=1$ when $B$ has non-abelian defect groups. This is
true for alternating groups, sporadic groups (with just three exceptions for
covering groups when $p=3$), and the Tits group $\tw{2}\tF_4(2)'$, but we also
see that it holds in the so-called quasi-isolated blocks for groups of Lie type.

\begin{thmA}   \label{thm:qiIntro}
 Let $p\ge5$ and let $A$ be a finite group such that $S=\bF^*(A)$ is
 quasi-simple of Lie type in characteristic distinct from $p$. Then any
 $p$-block $B$ of $A$ of non-abelian defect covering a quasi-isolated block of
 $S$ satisfies $\mh(B)=1$.
\end{thmA}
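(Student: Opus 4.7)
The strategy is to first exhibit a character of height $1$ in the quasi-isolated block $b$ of $S$ covered by $B$, and then transfer it to $B$ via Clifford theory.

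I begin by reducing from $A$ to $S$. Since $S = \bF^*(A)$ is quasi-simple, $C_A(S) = Z(S)$, so $A/Z(S)$ embeds into $\Aut(S)$. Let $b$ be a quasi-isolated $p$-block of $S$ covered by $B$. When $p \nmid [A:S]$, a defect group of $B$ meets $S$ in a defect group of $b$, so non-abelian defect of $B$ forces non-abelian defect of $b$. For $S$ of Lie type with $p \ge 5$, the exceptional case where $p$ divides the order of the field-automorphism part of $\Out(S)$ is quite restrictive, and I would treat it by explicit character extension arguments. Having reduced matters to $b$, the goal becomes to exhibit $\chi \in \Irr(b)$ of height $1$; Clifford theory then produces a character $\psi \in \Irr(B)$ above $\chi$ of degree $[A : I_A(\chi)]\,\chi(1)$ (up to an extension factor), and checking $p$-parts yields $\psi$ of height $1$ in $B$.

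Next I apply the theory of quasi-isolated blocks via Jordan decomposition. Writing $G = \bG^F$ for a quasi-simple finite group of Lie type in characteristic $\ne p$ covering $S$, the block $b$ corresponds to a pair $(s, \beta)$ where $s \in \bG^{*F}$ is a quasi-isolated semisimple $p'$-element and $\beta$ is a unipotent block of $H^F := C_{\bG^*}(s)^F$. For $p \ge 5$ this correspondence (by Bonnaf\'e--Rouquier and Kessar--Malle) is realized by a Morita equivalence that preserves defect groups and character heights. Thus the non-abelian defect of $b$ descends to a non-abelian defect of $\beta$ in $H^F$, and it suffices to produce a height-$1$ character in $\beta$.

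Third, I would carry out the analysis of $\beta$ using Bonnaf\'e's classification of quasi-isolated classes. For $p \ge 5$ the connected centralizer $H^\circ = C^\circ_{\bG^*}(s)$ is a pseudo-Levi subgroup of $\bG^*$ of explicitly known type, so $\beta$ is essentially a unipotent block of a typically smaller reductive group. Using Lusztig's parametrization of unipotent characters and the known formulas for their degrees --- specifically, identifying a unipotent character of $H^F$ whose $p$-part equals $p^{a-d+1}$, where $p^a = |H^F|_p$ and $p^d = |D|$ --- I would exhibit an explicit character in $\beta$ of height $1$.

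The main obstacle I anticipate is the case-by-case verification in the third step, particularly for the exceptional groups $\tE_6, \tE_7, \tE_8, \tF_4$ and $\tw3\tD_4$, where the quasi-isolated classes and their centralizers must be examined individually against the tables of unipotent character degrees. A secondary difficulty is the Clifford-theoretic control of heights when $p$ divides $[A:S]$; the hypothesis $p \ge 5$ is essential here, since it limits $A/S$ to well-behaved (essentially cyclic field-automorphism) extensions for which the degree of an induced character can be tracked precisely.
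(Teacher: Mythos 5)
There are two genuine gaps, one conceptual and one structural, and both sit at the heart of the theorem.

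First, you invoke Bonnaf\'e--Rouquier to obtain a Morita equivalence between the quasi-isolated block $b$ and a unipotent block $\beta$ of $C_{\bG^*}(s)^F$. But the Bonnaf\'e--Rouquier theorem applies only when $C_{\bG^*}(s)$ is contained in a \emph{proper} Levi $\bL^*$ of $\bG^*$, and the defining feature of a quasi-isolated element $s$ is precisely that no such Levi exists. So for the blocks this theorem is about, there is no Morita equivalence of that form available, and this is exactly what makes the quasi-isolated case the hard residual case (the paper does use Bonnaf\'e--Rouquier, but only in Theorem \ref{thm:diag+fld} for blocks that are \emph{not} quasi-isolated, reducing them to a strictly smaller Levi). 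What one actually has for a quasi-isolated $s$ is a Jordan-decomposition bijection of ordinary characters, constructed after a regular embedding $\bG \hookrightarrow \wt\bG$, which preserves heights and respects block distribution by \cite{CE94,En00,FS82}. Establishing that it matches defect groups and can be chosen equivariant under field automorphisms is the content of \cite[Prop.~3.17]{MNST} and \cite[Thm~3.1]{CS13}; it is not a formal consequence of a Morita equivalence. Without this, the reduction to a unipotent block of $C_{\bG^*}(s)^F$ in your second step is unjustified.

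Second, your plan begins by trying to exhibit a height-$1$ character in the block $b$ of $S$ itself and then inducing. This fails outright in the regime that is actually difficult: when $p$ divides $[A:S]$ (i.e.\ $A$ involves a nontrivial field automorphism of $p$-power order), the defect groups of $B$ can be non-abelian while those of $b$ are abelian. In that case Brauer's height zero theorem forces every character of $b$ to have height zero, so there is no $\chi\in\Irr(b)$ of height $1$ to lift; the height-$1$ character of $B$ must instead be obtained from a $\sigma$-orbit of length $p$ on height-zero characters of $b$. This is why the paper needs Proposition~\ref{prop:4.8} and Proposition~\ref{prop:abelian} (showing that any $\sigma$-stable block of non-central defect contains a non-$\sigma$-stable Lusztig series, via explicit construction of $\ell$-elements $t$ in a $\si^*$-stable $\Phi$-torus). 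Your remark that the extension case ``is quite restrictive'' and would be handled ``by explicit character extension arguments'' does not engage with this obstruction.

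Finally, the step ``identify a unipotent character of $H^F$ whose $p$-part equals $p^{a-d+1}$'' is where the actual combinatorics live. The mechanism the paper uses is to work with the relative Weyl group $W_G(\bL,\la)$ of the $e$-cuspidal pair labelling the unipotent block and to show (Lemma~\ref{lem:unipdef}) that for $\ell\ge5$ this group always has an irreducible character with $\ell$-part exactly $\ell$; preservation of heights \cite[Prop.~6.5]{KMS} then transfers this to a height-$1$ unipotent character. You should name this as your mechanism rather than leaving it as a degree-formula search, and you should note that for $p\ge5$ the pathological low-rank type-$\tA$ exceptions to Lemma~\ref{lem:unipdef} do not occur, which is what keeps the argument uniform.
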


Again, we also prove partial results for $p=2,3$ in the situation of
Theorem~\ref{thm:qiIntro}. Our investigations lead us to believe that
the conclusion of Theorem~\ref{thm:qiIntro} should actually hold for all blocks
of the groups $A$ as above, a question we hope to come back to in the future.
\medskip

The paper is structured as follows. In Section \ref{sec:elem}, we provide some
initial reductions and observations that will be helpful throughout. Groups not
of Lie type and groups of Lie type defined in characteristic~$p$ are
discussed in Sections~\ref{sec:nonlie} and~\ref{sec:defchar}, respectively.
Finally, in Section~\ref{sec:crosschar}, we treat the groups of Lie type
defined in characteristic distinct from $p$ and complete the proofs of
Theorems~\ref{thm:quasi}, \ref{thm:aut}, and~\ref{thm:qiIntro}.

\section{Elementary observations and reductions}   \label{sec:elem}

Given a $p$-block $B$ of a finite group, we let $d(B)$ denote the defect of the
block. That is, $d(B)$ is the integer such that $|D|=p^{d(B)}$, where $D$ is
any defect group of $B$. The following will be used frequently.

\begin{lem}   \label{lem:pfactor}
 Let $N\unlhd G$ be finite groups with $G/N$ a $p$-group of order $p^a$ and let
 $b$ be a $p$-block of $N$.  Then there is a unique $p$-block $B$ of $G$
 covering $N$. If $b$ is further $G$-invariant, then $d(B)=d(b)+a$.
\end{lem}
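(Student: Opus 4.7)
The plan is to reduce both assertions to the case where $b$ is $G$-invariant via the Fong--Reynolds correspondence.

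For uniqueness, the blocks of $G$ covering $b$ are in bijection with the blocks of $T := \operatorname{Stab}_G(b)$ covering $b$, and since $N \le T \le G$ the quotient $T/N$ is still a $p$-group; thus we may assume $b$ is $G$-invariant. Then the block idempotent $e_b \in Z(kN)$ (with $k$ a sufficiently large field of characteristic $p$) is $G$-stable and hence lies in $Z(kG)$. Blocks of $G$ covering $b$ correspond to primitive central idempotents of $Z(kG)$ summing to $e_b$, and $e_b$ is itself primitive there by the following observation: every $p$-regular element of $G$ lies in $N$ (its image in the $p$-group $G/N$ has order coprime to $p$ and a divisor of $p^a$, hence is trivial), so all $p$-regular class sums of $G$ lie in $Z(kN)^G \subseteq Z(kG)$, forcing the central character of any block of $G$ covering $b$ to be uniquely determined by $\omega_b$.

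For the defect formula, assume $b$ is $G$-invariant with defect group $D_b$, and let $D$ be a defect group of the covering block $B$. The upper bound $d(B) \le d(b) + a$ is immediate: one may choose $D$ with $D \cap N = D_b$, and then $D/D_b \hookrightarrow G/N$ forces $|D| \le |D_b| \cdot p^a$. For the reverse inequality, I would identify $D$ (via Brauer pairs or source algebras) as a Sylow $p$-subgroup of $N_G(D_b, b_{D_b})$, where $b_{D_b}$ is the Brauer correspondent of $b$ in $C_N(D_b)$. Since $b$ is $G$-invariant and the maximal Brauer pair $(D_b, b_{D_b})$ of $b$ is unique up to $N$-conjugacy, every $G$-coset of $N$ admits a representative in $N_G(D_b, b_{D_b})$, so $N_G(D_b, b_{D_b}) \cdot N = G$ and $[N_G(D_b, b_{D_b}) : N_N(D_b, b_{D_b})] = p^a$; as $D_b$ is a Sylow $p$-subgroup of $N_N(D_b, b_{D_b})$ by defect-group theory, a Sylow count then yields $|D| = |D_b| \cdot p^a$.

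The main technical step I expect to require care is the identification of a defect group of $B$ with a Sylow $p$-subgroup of $N_G(D_b, b_{D_b})$, which relies on the full theory of Brauer pairs for covering blocks. Since this lemma is classical in modular representation theory, a concise write-up will likely cite a standard reference such as Navarro's \emph{Blocks and Characters of Finite Groups} or Nagao--Tsushima, where the covering theory for $p$-power index is developed in detail.
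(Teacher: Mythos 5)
The paper's proof is a one-line citation: uniqueness is \cite[Cor.~9.6]{N98} and the defect formula is \cite[Thm~9.17]{N98} (Kn\"orr's theorem). Your proposal instead attempts a from-scratch argument, so a comparison of the details is in order.

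Your uniqueness argument is essentially correct and is the standard proof behind the cited corollary: since $G/N$ is a $p$-group, all $p$-regular elements of $G$ lie in $N$, block idempotents of $kG$ are supported on $p$-regular classes, and hence every block idempotent of $G$ lying under $e_b$ belongs to $Z(kN)^G$ and is determined by $\omega_b$. This is fine, if compressed.

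The argument for $d(B)\ge d(b)+a$ has a genuine gap. You assert two things: that a defect group $D$ of $B$ is a Sylow $p$-subgroup of $N_G(D_b,b_{D_b})$, and that $D_b$ is a Sylow $p$-subgroup of $N_N(D_b,b_{D_b})$. Neither is true. Take $N=\fA_5$, $p=5$, $b$ the $5$-block of defect zero, and $G=\fA_5\times C_5$, so $a=1$ and $b$ is $G$-invariant. Then $D_b=1$, the Brauer correspondent $b_{D_b}$ is $b$ itself, $N_N(D_b,b_{D_b})=N$ whose Sylow $5$-subgroup has order $5\ne |D_b|$, and $N_G(D_b,b_{D_b})=G$ whose Sylow $5$-subgroup has order $25$, whereas the defect group of $B$ is $C_5$ of order $5$. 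So both intermediate claims fail; they happen to fail by the same factor so your final count $|D|=|D_b|\cdot p^a$ is numerically right, but the ``Sylow count'' is not a proof. The obstruction is exactly the point where one cannot avoid Kn\"orr's theorem (as recorded in \cite[Thm~9.17]{N98}): one needs the genuinely block-theoretic statement that some defect group $D$ of $B$ satisfies $D\cap N=D_b$ \emph{and} $DN=G$ when $G/N$ is a $p$-group and $b$ is $G$-invariant, and this is not a consequence of Sylow theory in the normaliser of the Brauer pair. Your upper bound $d(B)\le d(b)+a$ via $D/(D\cap N)\hookrightarrow G/N$ is fine, but for the lower bound you should simply cite Kn\"orr/Navarro as the paper does, rather than the Sylow identification.
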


\begin{proof}
The first statement is \cite[Cor.~9.6]{N98}, and the second follows immediately
from \cite[Thm~9.17]{N98}.
\end{proof}

\begin{lem}   \label{lem:not-invariant}
 Let $N\unlhd G$ be finite groups, and $B$ a $p$-block of $N$ with stabiliser
 $G_B=N$ in $G$. Then Conjecture~\ref{conj} holds for the unique block $\hat B$
 of $G$ covering $B$ if and only if it holds for $B$.
\end{lem}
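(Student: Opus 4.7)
The plan is to apply the Fong--Reynolds reduction, of which this lemma is essentially a direct corollary. Since $G_B=N$, the Fong--Reynolds correspondent in $N$ of the unique block $\hat B$ of $G$ covering $B$ is $B$ itself. I would open the argument by recording the two standard outputs of Fong--Reynolds (see \cite{N98}): first, that induction $\chi\mapsto\chi^G$ is a bijection $\Irr(B)\to\Irr(\hat B)$; and second, that any defect group $D$ of $B$ is also a defect group of $\hat B$.

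With these two facts in hand, the remainder is a trivial computation of $p$-parts. For $\chi\in\Irr(B)$, the induced character $\hat\chi:=\chi^G\in\Irr(\hat B)$ has degree $\hat\chi(1)=[G:N]\,\chi(1)$, and hence
$$\hat\chi(1)\,|D|/|G| \;=\; [G:N]\,\chi(1)\,|D|/|G| \;=\; \chi(1)\,|D|/|N|$$
as integers. The $p$-parts of the outer two quantities therefore coincide, which identifies the height of $\hat\chi$ in $\hat B$ with the height of $\chi$ in $B$. Combined with the bijection above, the multisets of heights of characters in $B$ and in $\hat B$ agree; in particular, $\mh(\hat B)=\mh(B)$.

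Since $B$ and $\hat B$ share a common defect group $D$, the assertion $\mh(\hat B)=\mh(D)$ of Conjecture~\ref{conj} for $\hat B$ is literally the same as the assertion $\mh(B)=\mh(D)$ for $B$. Hence one holds if and only if the other does.

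The sole subtlety, and thus the nominal ``obstacle'', is to ensure the Fong--Reynolds inputs are cited in a form that applies in the extreme case $G_B=N$ rather than only the more commonly stated case $N\le G_B<G$. Once that is addressed, the argument requires nothing beyond elementary bookkeeping.
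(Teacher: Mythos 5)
Your proposal is correct and follows precisely the route the paper takes: the paper's entire proof is a one-line citation to the Fong--Reynolds theorem (\cite[Thm~9.14]{N98}), and your write-up simply spells out the height-preserving bijection and common defect group that this theorem provides. The computation of $p$-parts and the observation that $B$ and $\hat B$ share a defect group are exactly the ``immediate consequence'' the authors have in mind.
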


\begin{proof}
This is an immediate consequence of \cite[Thm~9.14]{N98}.
\end{proof}

The next observation shows we may restrict attention to groups $G$ with
$G=O^{p'}(G)$:

\begin{lem}   \label{lem:Op'}
 Let $N\unlhd G$ be finite groups with $[G:N]$ prime to $p$. Then
 Conjecture~\ref{conj} holds for the $p$-blocks of $G$ if and only if it holds
 for the $p$-blocks of $N$. Here, if the block $\hat B$ of $G$ covers the block
 $B$ of $N$, then $\mh(B)=\mh(\hat B)$.
\end{lem}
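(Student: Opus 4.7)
The plan is to establish directly that whenever $\hat B$ is a block of $G$ covering a block $B$ of $N$, one has both $\mh(\hat B)=\mh(B)$ and $\mh(D_{\hat B})=\mh(D_B)$. Granted these two equalities, the Eaton--Moret\'o condition for $\hat B$ is equivalent to that for $B$. Since every block of $G$ covers at least one block of $N$, and every block of $N$ is covered by at least one block of $G$, Conjecture~\ref{conj} will then hold for all $p$-blocks of $G$ if and only if it holds for all $p$-blocks of $N$, and the second assertion of the lemma is precisely $\mh(B)=\mh(\hat B)$.

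For the defect-group equality, I would appeal to the standard fact that, because $[G:N]$ is coprime to $p$, any defect group of $\hat B$ is contained in $N$ and is in fact a defect group of some $G$-conjugate of $B$: in the general theory a defect group of $\hat B$ intersects $N$ in a defect group of (a conjugate of) $B$ and projects onto a Sylow $p$-subgroup of the stabiliser quotient in $G/N$, but here that quotient is a $p'$-group. In particular $|D_{\hat B}|=|D_B|$, and hence $\mh(D_{\hat B})=\mh(D_B)$.

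For character heights, I would invoke Clifford theory. For any $\chi\in\Irr(\hat B)$ covering $\theta\in\Irr(B)$, the ratio $\chi(1)/\theta(1)$ equals $et$ where $e$ is the ramification index and $t=[G:G_\theta]$; both factors divide $[G:N]$ and are therefore $p'$-numbers, so $\chi(1)_p=\theta(1)_p$. Combining this with $|G|_p=|N|_p$ and the defect-group equality just noted, the defining relation $(\chi(1)|D|/|G|)_p=p^{h(\chi)}$ gives $h(\chi)=h(\theta)$. Since every $\theta\in\Irr(B)$ is covered by some $\chi\in\Irr(\hat B)$ and vice versa, the sets of heights occurring in $\hat B$ and in $B$ coincide, whence $\mh(\hat B)=\mh(B)$.

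No step is a substantive obstacle; the argument amounts to bookkeeping via Clifford theory for $p'$-extensions. The one point that genuinely requires the hypothesis $p\nmid[G:N]$ is the defect-group equality (not merely inclusion), together with its consequence that the $p$-part of character degrees is preserved when passing between $\hat B$ and $B$.
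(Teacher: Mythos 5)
Your proposal is correct and takes essentially the same route as the paper: defect groups of $B$ and $\hat B$ coincide (since $[G:N]$ is a $p'$-number), Clifford theory shows height is preserved when passing between characters of $B$ and characters of $\hat B$ covering them, and then $\mh(B)=\mh(\hat B)$ and $\mh(D_B)=\mh(D_{\hat B})$ follow. One small wording slip: you write ``in particular $|D_{\hat B}|=|D_B|$, and hence $\mh(D_{\hat B})=\mh(D_B)$,'' but $\mh$ is not a function of the order of $D$ alone; the conclusion you want follows from the stronger fact you established just before, namely that a defect group of $\hat B$ is (up to conjugacy) a defect group of $B$, hence the two are isomorphic.
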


\begin{proof}
Let $B$ be a $p$-block of $N$ with defect group $D$. Then any block $\hat B$ of
$G$ covering $B$ has the same defect groups. Moreover, if $\chi\in\Irr(B)$ has
height $\mh(B)$ then all constituents of the induced character $\chi^G$ have the
same height as $\chi$ by Clifford theory.  Thus $\hat B$ satisfies
Conjecture~\ref{conj} if $B$ does. Conversely, for $\hat B$ a block of $G$ let
$B$ be a
block of $N$ covered by $\hat B$. Then again $B,\hat B$ have the same defect
groups, and the restriction of any $\chi\in\Irr(\hat B)$ to $N$ has a
constituent of the same height. Since this constituent lies in a conjugate
of~$B$, we see that $B$ also contains a constituent of that height.
\end{proof}

\begin{lem} \label{lem:centralprod}
 Let $G=NZ$ be the product of commuting subgroups such that $Z$ is abelian and
 $N\cap Z$ is a central $p'$-subgroup. Let $\hat B$ be a $p$-block of $G$
 covering a block $B$ of~$N$. Then $\mh(B)=\mh(\hat B)$. Further,
 Conjecture~\ref{conj} holds for $B$ if and only if it holds for~$\hat B$.
\end{lem}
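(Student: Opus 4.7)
The plan is to reduce the central product $G=NZ$ to an internal direct product and then strip off a $p'$-quotient via Lemma~\ref{lem:Op'}. First, since $Z$ is abelian and commutes with $N$, every element of $Z$ commutes with every element of $G=NZ$, so $Z\le Z(G)$. Decompose $Z=Z_p\times Z_{p'}$ into its Sylow subgroups. Since $N\cap Z$ is a $p'$-group by hypothesis, $N\cap Z_p=1$, and therefore $K:=NZ_p=N\times Z_p$ as an internal direct product. Moreover $[G:K]$ divides $|Z_{p'}/(Z_{p'}\cap N)|$, hence is prime to~$p$.

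Next I would analyse the block $\tilde B$ of $K$ covered by $\hat B$. Because $Z_p$ is a $p$-group it has a unique $p$-block $b_0$, with defect group $Z_p$ and all characters linear. The standard description of blocks of a direct product then gives $\tilde B=B\otimes b_0$, with defect groups of the form $D\times Z_p$ where $D$ is any defect group of~$B$. Any $\psi\in\Irr(\tilde B)$ has the form $\chi\otimes\lambda$ with $\chi\in\Irr(B)$ and $\lambda\in\Irr(Z_p)$ linear, so $\psi(1)=\chi(1)$; a direct computation using $|K|=|N|\,|Z_p|$ shows
$$\bigl(\psi(1)\,|D\times Z_p|/|K|\bigr)_p=\bigl(\chi(1)\,|D|/|N|\bigr)_p,$$
so the height of $\psi$ in $\tilde B$ agrees with the height of $\chi$ in $B$. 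In particular the set of positive heights of characters in $\tilde B$ coincides with that in $B$, so $\mh(\tilde B)=\mh(B)$.

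Finally, $G/K$ is a $p'$-group, so Lemma~\ref{lem:Op'} applies to the pair $K\unlhd G$: it yields $\mh(\hat B)=\mh(\tilde B)=\mh(B)$, and also that $\hat B$ has the same defect groups as $\tilde B$, namely $D\times Z_p$. Since $Z_p$ is abelian, a character $\phi\otimes\mu\in\Irr(D\times Z_p)$ is non-linear exactly when $\phi\in\Irr(D)$ is non-linear, so $\mh(D\times Z_p)=\mh(D)$. Chaining these equalities, $\mh(\hat B)=\mh(D\times Z_p)$ if and only if $\mh(B)=\mh(D)$, proving both assertions.

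There is no real obstacle here; the only point requiring a little care is the verification that $K=N\times Z_p$ really is an \emph{internal} direct product (which relies crucially on the $p'$-hypothesis on $N\cap Z$) and the accompanying identification of $\tilde B$ and its defect group inside that direct product. Everything after that is bookkeeping with the height formula and with Lemma~\ref{lem:Op'}.
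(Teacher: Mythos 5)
Your proof is correct, and it takes a genuinely different route from the paper's. The paper's proof goes \emph{up}: it regards $G$ as the quotient of the external direct product $N\times Z$ by the central $p'$-subgroup $\{(x,x^{-1}):x\in N\cap Z\}$, invokes \cite[Thm~9.9]{N98} to identify $\Irr(\hat B)$ with $\Irr(B\otimes B')$ for a block $B'$ of~$Z$, and then observes that $\Irr(B')$ and $\Irr(D')$ consist of linear characters since $Z$ is abelian. You instead go \emph{down}: you split $Z=Z_p\times Z_{p'}$, use the $p'$-hypothesis on $N\cap Z$ to see that $K:=NZ_p$ is an internal direct product $N\times Z_p$ normal in~$G$ of $p'$-index, analyse the unique block $\tilde B=B\otimes b_0$ of $K$ above~$B$ with its defect group $D\times Z_p$ and the matching heights, and then apply Lemma~\ref{lem:Op'} to the $p'$-quotient $G/K$. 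Your argument is a bit more self-contained in that it avoids the central-quotient block theory and reuses only Lemma~\ref{lem:Op'} plus block theory for direct products; the paper's is slightly shorter because it handles all of $Z$ in one stroke without needing to split off $Z_p$ or to verify that $K$ is an internal direct product. Both correctly reduce matters to the observation that the extra factor ($Z$, resp.\ $Z_p$) contributes only linear characters to the block and to the defect group, so neither $\mh(B)$ nor $\mh(D)$ changes.
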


\begin{proof}
Since $G$ can be viewed as a quotient of $N\times Z$ by a central
$p'$-subgroup, we may identify $\Irr(\hat B)$ with $\Irr(B\otimes B')$ for some
block $B'$ of $Z$ using \cite[Thm~9.9]{N98}. Further, any defect group
$\hat D$ of $\hat B$ is isomorphic to one of $B\otimes B'$, and is of the form
$D\times D'$ with $D$ a defect group for $B$ and $D'$ a defect group for $B'$. 
Since all characters in $\Irr(B')$ and in $\Irr(D')$ are linear, we conclude
$\mh(B)=\mh(\hat B)$ and $\mh(D)=\mh(\hat D)$.
\end{proof}

\subsection{Metacyclic defect groups}

We next make some useful observations about certain blocks with metacyclic
defect groups.

\begin{lem}   \label{lem:meta}
 Let $P$ be a non-abelian metacyclic $p$-group, with $p>2$. Then $\mh(P)=1$.
\end{lem}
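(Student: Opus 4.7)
Since $P$ is a non-abelian $p$-group, $\mh(P)\ge 1$ (every non-linear irreducible degree is at least $p$), so it suffices to produce an irreducible character of $P$ of degree exactly $p$. The plan is to use Clifford theory relative to a cyclic normal subgroup.

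First, I would fix generators $P=\langle a,b\rangle$ with $N:=\langle a\rangle$ cyclic normal in $P$ of order $p^r$ and $P/N$ cyclic, generated by $bN$. The image of $b$ in $\Aut(N)\cong(\mathbb{Z}/p^r)^\times$ is a $p$-element, and since $p>2$ the Sylow $p$-subgroup of $(\mathbb{Z}/p^r)^\times$ is cyclic of order $p^{r-1}$, generated by $1+p$. Hence one can write $b^{-1}ab=a^{1+mp^u}$ for some integer $m$ coprime to $p$ and some $u\ge 1$, with the non-abelianity of $P$ forcing $u<r$.

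Next, I would pick $\lambda\in\Irr(N)$ of order exactly $p^{u+1}$ (possible since $u+1\le r$) and compute its stabiliser. For $k\in\mathbb{Z}$ we have $\lambda^{b^k}(a)=\lambda(a)^{(1+mp^u)^k}$, so $b^k$ stabilises $\lambda$ iff $(1+mp^u)^k\equiv 1\pmod{p^{u+1}}$. Lifting the exponent (valid since $p$ is odd and $p\mid mp^u$) gives
\[
 v_p\bigl((1+mp^u)^k-1\bigr)=u+v_p(k),
\]
so $b^k$ stabilises $\lambda$ exactly when $p\mid k$. Therefore $P_\lambda$ is the preimage in $P$ of the unique index-$p$ subgroup of $P/N$, so $[P:P_\lambda]=p$. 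Since $P_\lambda/N$ is cyclic, the $P_\lambda$-invariant character $\lambda$ extends to a linear character $\hat\lambda$ of $P_\lambda$ by the standard extension result for cyclic quotients, and then by the Clifford correspondence $\Ind_{P_\lambda}^P\hat\lambda\in\Irr(P)$ has degree $[P:P_\lambda]=p$, yielding $\mh(P)\le 1$.

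The main obstacle is the initial normal form for the action of $b$ on $N$: the clean conjugation formula $b^{-1}ab=a^{1+mp^u}$ with $u\ge 1$ relies on the Sylow $p$-subgroup of $\Aut(C_{p^r})$ being cyclic, which holds precisely because $p>2$. Once this normalisation is in place, the rest reduces to a direct $p$-adic valuation computation and a routine application of Clifford theory.
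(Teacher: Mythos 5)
Your proof is correct and takes essentially the same route as the paper's: choose a cyclic normal subgroup $N$ with $P/N$ cyclic, locate a character $\lambda\in\Irr(N)$ whose stabiliser has index exactly $p$ in $P$, extend $\lambda$ to its (cyclic-over-$N$) stabiliser, and induce to get an irreducible character of degree $p$. The only difference is cosmetic: you construct such a $\lambda$ directly via the normal form $b^{-1}ab=a^{1+mp^u}$ and lifting the exponent, while the paper starts from an arbitrary non-invariant $\theta$ and descends to a suitable power $\theta^{p^{a-1}}$; both rest on the same underlying fact that the Sylow $p$-subgroup of $\Aut(C_{p^r})$ is cyclic for odd $p$.
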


\begin{proof}
Let $N\unlhd P$ be a cyclic normal subgroup with $P/N$ cyclic. Then any
irreducible character of $N$ extends to its stabiliser in $P$. As $P$ is
non-abelian, there is a character $\theta\in\Irr(N)$ that is not $P$-invariant.
Let $p^a>1$ be the length of the orbit of $\theta$ under $P$. Then $\theta^p$
lies in an orbit of length $p^{a-1}$ and thus, some power $\chi$ of $\theta$
has stabiliser of index~$p$ in $P$. Any character of $P$ above $\chi$ then has
height~1.
\end{proof}

\begin{prop}   \label{prop:meta}
 Let $G$ be a finite group and let $p$ be a prime dividing $|G|$. Let
 $\si\in\Aut(G)\setminus\Inn(G)$ be an automorphism of $p$-power order and
 define $A:=G\langle\si\rangle$. If $B_A$ is a $p$-block of $A$ covering a
 block $B$ of $G$ with cyclic defect groups, then $\mh(B_A)=\mh(D_A)$, where
 $D_A$ is a defect group for $B_A$.
\end{prop}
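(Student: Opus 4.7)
My plan is to reduce first to the case where $B$ is $A$-invariant. Let $A_B$ be the stabiliser of $B$ in $A$; it is $G\langle\sigma^{p^c}\rangle$ for some $c\le a$, preserving the setup of the proposition. By the Fong--Reynolds correspondence, $B_A$ is Morita equivalent to the unique block of $A_B$ covering $B$, has the same defect groups, and the bijection between their irreducible characters preserves heights. In the edge case $A_B=G$, the characters of $B_A$ are irreducibly induced from $B$, the defect group $D_A$ coincides with the cyclic $D$, and a short height computation shows each character of $B_A$ has height $0$; hence $\mh(B_A)=\infty=\mh(D_A)$. Otherwise, we may assume $B$ is $A$-invariant.

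Next I would apply Lemma~\ref{lem:pfactor} to obtain a unique covering block $B_A$ with $d(B_A)=d(B)+a$, and choose a defect group $D_A$ of $B_A$ containing $D$. Then $D_A/D\cong A/G$ is cyclic of order $p^a$, so $D_A$ is metacyclic. By Clifford theory, $\Irr(B_A)$ is parameterised by pairs $(\theta,\lambda)$ with $\theta$ an $A$-orbit representative in $\Irr(B)$ and $\lambda\in\Irr(A_\theta/G)$; since $A_\theta/G$ is a cyclic $p$-group, every $A_\theta$-invariant $\theta$ extends to $A_\theta$. A direct computation, using that each $\theta\in\Irr(B)$ has height $0$, yields that the character $\Ind_{A_\theta}^A(\tilde\theta\otimes\lambda)$ has height $a-v_p([A_\theta:G])$ in $B_A$. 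Consequently $\mh(B_A)=\infty$ if and only if every $\theta\in\Irr(B)$ is $A$-invariant, and otherwise $\mh(B_A)=a-\max\{v_p([A_\theta:G]):A_\theta<A\}$. When $D_A$ is abelian, the (now proven) Brauer height zero theorem already gives $\mh(B_A)=\infty=\mh(D_A)$.

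The remaining case is $D_A$ non-abelian, where Lemma~\ref{lem:meta} (and an analogous direct check for $p=2$, since non-abelian metacyclic $2$-groups also satisfy $\mh=1$) gives $\mh(D_A)=1$. Here I need to exhibit $\theta\in\Irr(B)$ with $\langle\sigma\rangle$-orbit of size exactly $p$. After replacing $\sigma$ by a representative inside $D_A$ (a legal change within its coset in $\Aut(G)/\Inn(G)$), the non-abelianness of $D_A$ becomes equivalent to $\sigma$ acting non-trivially on $D$. The classical structure theory of cyclic-defect blocks identifies the exceptional characters of $B$ with the $E$-orbits on $\Irr(D)\setminus\{1\}$, where $E\le\Aut(D)$ is the cyclic inertial group of order dividing $p-1$. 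Since $E$ and $\langle\sigma\rangle$ lie in the abelian group $\Aut(D)\cong(\mathbb{Z}/p^d)^\times$ with coprime orders, the $\sigma$-action on exceptional characters matches the induced action on $\Irr(D)^*/E$. A short calculation in $(\mathbb{Z}/p^d)^\times$ shows that any non-trivial $p$-power-order element has an orbit of size exactly $p$ on $\Irr(D)$, and because $|E|$ is prime to $p$ this orbit descends injectively to $\Irr(D)^*/E$, supplying the desired $\theta$. The main obstacle will be to establish rigorously that the $\sigma$-action on the exceptional characters of $B$ really agrees with the described action on $\Irr(D)^*/E$; this is either extracted from the Brauer--Dade--Feit structure theory of cyclic-defect blocks or proved via a source-algebra equivalence between $B$ and the associated Brauer-tree algebra.
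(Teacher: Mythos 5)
Your approach coincides with the paper's for $p\geq 3$: reduce to $\si$-stable $B$ via Fong--Reynolds, dispose of the abelian case via the Brauer height zero theorem, then use the cyclic-defect parametrization of exceptional characters of $B$ by $\Irr(D)\setminus\{1\}$ up to $E$-conjugacy and the fact that, since $E/C_G(D)$ has $p'$-order while $\si$ has $p$-power order and both act inside the abelian group $\Aut(D)$, a $\si$-orbit of length exactly $p$ on $\Irr(D)$ survives on exceptional characters. The ``main obstacle'' you flag at the end, the equivariance $\chi_\lambda^\tau=\chi_{\lambda^\tau}$, is precisely what the paper extracts from \cite[Prop.~5.1]{KoS}, so your instinct about where the real work lies is correct.

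The genuine gap is the treatment of $p=2$. The paper does not run the Dade-theory argument there at all; it cites \cite[Cor.~8.2]{Sam}, which asserts $\mh(B_A)=\mh(D_A)$ for arbitrary $2$-blocks with metacyclic defect group, and thereby bypasses the whole discussion. Your plan instead tries to extend the exceptional-character argument to $p=2$, and this has two unresolved points. First, \cite[Prop.~5.1]{KoS} is formulated only for odd primes, so the $\si$-equivariance of the exceptional-character labelling would need an independent verification at $p=2$. Second, the parenthetical claim that non-abelian metacyclic $2$-groups satisfy $\mh=1$, while true, is not an ``analogous direct check'': the proof of Lemma~\ref{lem:meta} uses that if $\theta$ has orbit length $p^a$ then $\theta^p$ has orbit length $p^{a-1}$, which relies on $\Aut(C_{p^n})$ being cyclic and fails for $p=2$ and $n\geq 3$, where $\Aut(C_{2^n})\cong C_2\times C_{2^{n-2}}$ (under inversion, a faithful $\theta$ and $\theta^2$ both have orbit length $2$). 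One needs either a separate elementary argument for the $2$-group statement or, as the paper does, a direct citation that settles $p=2$ wholesale.
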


\begin{proof}
Let $D_A$ be a defect group for $B_A$ and let $D:=D_A\cap G$, a (cyclic)
defect group for~$B$. Note that $D_A/D\cong D_AG/G\leq A/G$ is cyclic, so that
$D_A$ is metacyclic. If $p=2$, then \cite[Cor.~8.2]{Sam} yields that
$\mh(B_A)=\mh(D_A)$, so we assume $p$ is odd. Note that Lemma \ref{lem:meta}
gives $\mh(D_A)=1$. So, we must show that $\mh(B_A)=1$.

Applying \cite[Thm~9.14(d)]{N98}, we may assume $B$ is stable under $\si$.
Since $A/G$ is cyclic, it suffices to show that there exists a
(height-zero) character of $B$ lying in an orbit of length $p$ under $\si$.

Following \cite[Thm~1 Part~1]{Dad66} and the discussion before, since $B$ is
cyclic, there is a block $b$ of $DC_G(D)=C_G(D)$, unique up to
$N_G(D)$-conjugation, such that $b^G=B'$, where $B'$ is the Brauer
correspondent of $B$ in $N_G(D)$. Then writing $E:=N_G(D)_b$, we have
$[E:C_G(D)]$ is prime to $p$. Further, the block $B$ contains
$(|D|-1)/[E:C_G(D)]$ so-called exceptional characters $\chi_\la\in\Irr(B)$,
labelled by $1\neq \la\in\Irr(D)$, up to $E$-conjugacy. As in the
second-to-last sentence of the proof of \cite[Prop.~5.1]{KoS}, we have
$\chi_\la^\tau=\chi_{\la^\tau}$ for any $\tau\in\langle\si\rangle$. Then
$\chi_\la^\tau=\chi_\la$ if and only if $\la^\tau=\la^g$ for some
$g\in E$. Note that since $D$ is cyclic, $\tau$ and $g$ act on $\Irr(D)$
by powers, and hence these actions commute. But then $\la$ has the same orbit
size under the $p$-elements $\tau$ and $g$, contradicting that
$E/C_G(D)$ has $p'$-order, unless $g\in C_G(D)$.  Hence, we see for
$1\neq \tau\in \langle\si\rangle$, we have $\chi_\la^\tau=\chi_\la$ if and
only if $\la^\tau=\la$. Then choosing $\la$ to have orbit of length $p$ under
$\si$ as in Lemma~\ref{lem:meta}, we obtain $\chi_\la$ in an orbit of length
$p$ under~$\si$.
\end{proof}

\section{Groups not of Lie type}   \label{sec:nonlie}

In this section, we prove our main results for decorated simple groups that are
not of Lie type. The covering groups of alternating and symmetric groups were
shown to satisfy Conjecture~\ref{conj} in \cite[Thm~2.1]{BM15}. Since $\fS_n$
is the full automorphism group of $\fA_n$ when $n\ne6$, the following result
completes the investigation of decorated alternating groups.

\begin{prop}   \label{prop:alt}
 Conjecture~\ref{conj} holds for all groups $A$ with $\bF^*(A)$ a covering
 group of~$\fA_6$. Further, we have $\mh(B)=1$ for the blocks of these groups
 with non-abelian defect groups.
\end{prop}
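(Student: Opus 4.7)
Since $\mathfrak{A}_6$ has Schur multiplier $C_6$ and $\Out(\mathfrak{A}_6)\cong C_2\times C_2$, the hypothesis leaves only finitely many groups $A$, all embedded in $\Aut(6.\mathfrak{A}_6)$; the possibilities for $\mathbf{F}^*(A)$ are $\mathfrak{A}_6$, $2.\mathfrak{A}_6$, $3.\mathfrak{A}_6$, and $6.\mathfrak{A}_6$. The primes dividing the orders are $2,3,5$. For $p=5$, every Sylow $5$-subgroup of every such $A$ is cyclic of order $5$ (as $5\nmid|Z(\mathbf{F}^*(A))|\cdot|A/\mathbf{F}^*(A)|$ in each case), so all defect groups are abelian and both sides of Conjecture~\ref{conj} equal $\infty$ by the Brauer height zero theorem.

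For $p\in\{2,3\}$, Lemma~\ref{lem:Op'} lets me replace $A$ by $O^{p'}(A)$, shrinking the list to a handful of explicit groups. At $p=3$, the Sylow $3$-subgroups of $\mathfrak{A}_6$ and of $2.\mathfrak{A}_6$ are elementary abelian of order $9$, so the conjecture is vacuous for their blocks; the genuine cases are $\mathbf{F}^*(A)\in\{3.\mathfrak{A}_6,6.\mathfrak{A}_6\}$, whose Sylow $3$-subgroups have order $27$, are non-abelian, and satisfy $\mh=1$. My plan is to use the character tables of all groups on the short list, available through the \textsc{Atlas} and GAP's character table library, to (i) partition $\Irr(A)$ into $p$-blocks via central characters; (ii) read off defect group orders from the $p$-parts $(\chi(1)|D|/|A|)_p$ and identify the defect groups from the known Sylow structure; and (iii) exhibit in every block with non-abelian defect an irreducible character of height exactly $1$.

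For $p=2$ the same procedure applies. The non-abelian $2$-groups that arise as defect groups here have order at most $32$ and are of familiar dihedral, semidihedral, quaternion, or small metacyclic type, each readily seen to satisfy $\mh=1$ by inspection of its character degrees. The main obstacle is not conceptual but organisational: one must enumerate the $p$-blocks of each group on the short list and verify, block by block, both the structure of the defect group and the existence of a height-$1$ character. Since the character tables of all covers and automorphic extensions of $\mathfrak{A}_6$ are fully tabulated, this is a finite and mechanical check that simultaneously establishes Conjecture~\ref{conj} for these $A$ and the sharper claim $\mh(B)=1$ for the non-abelian defect blocks.
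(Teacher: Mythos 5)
Your approach is essentially the same as the paper's: reduce to $A=O^{p'}(A)$ via Lemma~\ref{lem:Op'}, then verify the finitely many remaining groups by inspecting their character tables in GAP. The only difference is that the paper first invokes \cite[Thm~2.1]{BM15} (Conjecture~\ref{conj} for covering groups of $\fA_n$ and $\fS_n$), which cuts the explicit GAP check down to the faithful $2$-blocks of six specific groups, whereas you propose a somewhat longer but equally mechanical case list.
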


\begin{proof}
Using Lemma~\ref{lem:Op'}, we may assume $A=O^{p'}(A)$. In view of
\cite[Thm~2.1]{BM15} we only need to discuss faithful 2-blocks of the groups
$A=\fA_6.2_2=\PGL_2(9)$, $\fA_6.2_3=M_{10}$, $2.\fA_6.2_2$, $3.\fA_6.2_2$,
$3.\fA_6.2_3$, and $6.\fA_6.2_2$ in Atlas notation. By inspection of their
character tables in \cite{gap} one sees that all blocks $B$ with non-abelian
defect groups $D$ have $\mh(B)=1=\mh(D)$, as wanted.
\end{proof}

For the sporadic simple groups, Conjecture~\ref{conj} was verified for all
blocks in \cite[Thm~D]{EM14}. Here we consider the decorated groups:

\begin{prop}   \label{prop:spor}
 Conjecture~\ref{conj} holds for all groups $A$ with $\bF^*(A)$ a covering
 group of a sporadic simple group or $\tw2F_4(2)'$.
\end{prop}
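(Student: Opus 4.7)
The strategy parallels Proposition~\ref{prop:alt}: reduce to a finite list of explicit groups and inspect their character tables in GAP. First, by Lemma~\ref{lem:Op'} we may assume $A = O^{p'}(A)$, so that $A/\bF^*(A)$ is a $p$-group. For every sporadic simple group $S$ and for $\tw2F_4(2)'$, $|\Out(S)|$ divides~$2$; hence for odd $p$ we necessarily have $A = \bF^*(A)$, a covering group of~$S$, while for $p=2$ we must additionally allow index-$2$ extensions. Applying Lemma~\ref{lem:centralprod} next lets us strip the $p'$-part of $Z(\bF^*(A))$, so we may further assume that $Z(\bF^*(A))$ is a $p$-group whose order divides the $p$-part of the Schur multiplier $M(S)$.

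Since the conjecture for the simple groups $S$ themselves is \cite[Thm~D]{EM14}, it remains to verify it for the faithful $p$-blocks of the proper covering groups and of the index-$2$ extensions arising above. As $|M(S)|$ divides~$12$ throughout this class, this produces a short explicit finite list of groups; for each prime $p$ the faithful $p$-blocks $B$, their defect groups $D$ up to order, and the character degrees in $B$ can be read off from the GAP character table library~\cite{gap}. For blocks with abelian (in particular cyclic) defect groups, the now-proven Brauer height zero conjecture gives $\mh(B) = \mh(D) = \infty$, so the substantive checks concern only blocks with non-abelian defect.

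For each non-abelian-defect block encountered, computing $\mh(B)$ from the character degrees and comparing it with $\mh(D)$ should, as in Proposition~\ref{prop:alt}, yield $\mh(B) = \mh(D) = 1$ in every instance; this matches the pattern predicted by Theorem~\ref{thm:qiIntro} and is what makes the conjecture easy to verify here once the data are in hand. The main obstacle I expect is identifying the isomorphism type of $D$ for those blocks where the defect group is not directly accessible in GAP, so that $\mh(D)$ can actually be pinned down. In such cases one can appeal to the local subgroup analyses in the ATLAS together with Brauer's first main theorem to determine $D$, or invoke Proposition~\ref{prop:meta} when $D$ turns out to be metacyclic (for instance when it arises as the Sylow $p$-subgroup of an explicitly known local subgroup of~$A$).
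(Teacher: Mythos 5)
Your overall strategy matches the paper's: reduce via Lemma~\ref{lem:Op'} to $A=O^{p'}(A)$, note the outer automorphism groups are at most $C_2$, and then verify a finite list of faithful blocks of covering groups and index-$2$ extensions by inspecting \cite{gap} data, using local information to pin down $\mh(D)$ where the defect group is not directly accessible. That much is sound and essentially the paper's argument.

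However, your prediction that the computation ``should, as in Proposition~\ref{prop:alt}, yield $\mh(B)=\mh(D)=1$ in every instance'' is false, and this matters for how the defect-group side of the check is organised. For the principal $3$-block of $A=3.ON$ one has $\mh(B)=\mh(D)=2$, not $1$ (the paper notes this explicitly, and remarks that $3.ON$, $3.ON.2$ and $Co_3$ are the only decorated sporadic cases with $\mh(B)>1$ and non-abelian defect). Your proposed shortcut of deducing $\mh(D)=1$ from a metacyclic quotient or a local subgroup whose Sylow $p$-subgroup has a height-one character is exactly the trick the paper uses for the other cases, but it is structurally incapable of certifying the $3.ON$ case: there one must actually show that the Sylow $3$-subgroup $D$ has \emph{no} irreducible character of degree~$3$, which requires a genuine computation with $D$ itself (the paper constructs $D$ from the smallest faithful permutation representation of $3.ON$). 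So the gap is that you have implicitly assumed the answer before checking, and the one exceptional case is precisely the one your suggested tools (quotient characters, Proposition~\ref{prop:meta}) cannot settle. Once you drop the blanket ``$=1$'' claim and treat $3.ON$ separately by an explicit Sylow computation, the argument goes through.
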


\begin{proof}
Using Lemma~\ref{lem:Op'}, we may again assume $A=O^{p'}(A)$. Using \cite{gap}
it turns out that all $p$-blocks $B$ of non-simple groups~$A$ as in the
statement with defect group $D$ of order at least~$p^3$ either have all
characters of height zero (and thus $D$ abelian by Brauer's height zero
conjecture), or have $\mh(B)=1$, or $B$ is the principal 3-block of $A=3.ON$
and $\mh(B)=2$. Except for the last case, it hence suffices to see that
$\mh(D)=1$ (e.g., by showing $\mh(\bar D)=1$ for some factor group $\bar D$
of~$D$).
\par
If $|D|\le p^4$ then clearly $\mh(D)=1$.
Further, if $B$ dominates a block with non-abelian defect of a quotient $\bar A$
of $A$ by a central $p$-subgroup, and the conjecture is verified for the
$p$-blocks of $\bar A$, it also holds for $B$. If $B$ covers a block $B'$ with
the same defect group of a normal subgroup of $A$, again the assertion for $B$
follows from the one for $B'$. 
Next, if $|\bZ(\bF^*(A))|$ is prime to $p$ and $B$ has a Sylow $p$-subgroup
of~$A$ as defect group, the claim follows from the corresponding one for
$A/\bZ(\bF^*(A))$. Since Conjecture~\ref{conj} has been shown for simple
sporadic groups in \cite[Thm~D]{EM14}, we are then left with the following
cases: the principal 2-blocks of $S.2$ for
$$S\in\{M_{12},M_{22},J_2,HS,J_3,McL,He,Suz,ON,Fi_{22},HN,Fi_{24}'\},$$
a 2-block of $HN.2$ of defect~5, and the principal 3-block of $3.ON$.

For $S\in\{M_{12},M_{22},J_2,HS,J_3,McL,He,Suz\}$ the Sylow 2-subgroups of $S.2$
are readily constructed in GAP \cite{gap}, and the claim follows; a Sylow
2-subgroup of $ON.2$ is contained
in a maximal subgroup $4^3.(\PSL_3(2)\times2)$, and Sylow 2-subgroups $\bar D$
of $\PSL_3(2)$ have \hbox{$\mh(\bar D)=1$}; a Sylow 2-subgroup of $Fi_{22}.2$ is
contained in a maximal subgroup $2^{10}.M_{22}.2$, hence has characters of
height~1 by the case of $M_{22}.2$; a Sylow 2-subgroup of $HN.2$ is contained
in a maximal subgroup $2^{3+2+6}.(3\times\PSL_3(2)).2$, and a Sylow 2-subgroup
$\bar D$ of $\PSL_3(2).2$ has order~16, so $\mh(\bar D)=1$; a Sylow
2-subgroup of $Fi_{24}'.2$ is contained in a maximal subgroup $2^{12}.M_{24}$,
and a Sylow 2-subgroup $\bar D$ of $M_{24}$ has $\mh(\bar D)=1$.
The non-principal 2-block of $ON.2$ of defect~5 covers a 2-block of $ON$ with
abelian Sylow 2-subgroups and thus its (non-abelian) defect groups possess
characters of height~1. Using the smallest faithful permutation representation
of $A=3.ON$, one computes using \cite{gap} that $\mh(D)=2$ for a Sylow
3-subgroup $D$ of $A$. This completes the proof of the proposition.
\end{proof}

The principal 3-blocks of $3.ON$, $3.ON.2$ and of $Co_3$ (treated in
\cite{EM14}) are the only blocks~$B$ in the realm of decorated sporadic groups
with $\mh(B)>1$ and non-abelian defect groups.

\begin{prop}   \label{prop:exc cov}
 Let $A$ be such that $\bF^*(A)$ is an exceptional covering group with cyclic
 centre of a simple group of Lie type and $A/\bF^*(A)$ is generated by field
 automorphisms. Then Conjecture~\ref{conj} holds for all blocks of $A$ for all
 primes.
\end{prop}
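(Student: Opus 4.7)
The plan is to reduce to a finite, explicit list of groups and then verify the conjecture directly from their character tables, following the template set by Propositions~\ref{prop:alt} and~\ref{prop:spor}. First, by Lemma~\ref{lem:Op'}, we may assume $A=O^{p'}(A)$, so that the quotient $A/S$ (where $S=\bF^*(A)$) is a cyclic $p$-group of field automorphisms. The simple groups of Lie type whose Schur multiplier has a nontrivial exceptional part form a short explicit list, tabulated in the Atlas; restricting to those exceptional covers $S$ with cyclic centre leaves only finitely many pairs $(S,p)$ to be inspected, together with the handful of field-automorphism $p$-extensions that each admits.

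For each resulting group $A$ and each $p$-block $B$ of $A$ with defect group $D$, I would split into cases according to $D$. If $D$ is abelian, then the Brauer height zero theorem \cite{MNST} gives $\mh(B)=\infty=\mh(D)$. If $D$ is cyclic, then either $A=S$, in which case the classical Brauer--Dade theory of cyclic-defect blocks yields the conclusion, or Proposition~\ref{prop:meta} applies (taking for $\sigma$ a generator of the field-automorphism group $A/S$, which has $p$-power order after the reduction), again yielding $\mh(B)=\mh(D)$. If $D$ is non-abelian and non-cyclic, the required equality is read off the character table of $A$, available in the \textsf{GAP} character table library \cite{gap}; as in the proof of Proposition~\ref{prop:spor}, when $\mh(D)$ is not immediately apparent one obtains it by passing to a suitable quotient of $D$ or to a Sylow subgroup of a known overgroup. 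One expects to find $\mh(B)=1=\mh(D)$ in almost every such remaining block.

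The main obstacle will be organisational rather than conceptual: one must cycle through the complete (but not completely tiny) list of exceptional covers with cyclic centre, together with each admissible field-automorphism $p$-extension and each prime $p$ dividing $|A|$. The subtler points are (a) ensuring a character table, or at least a faithful matrix or permutation representation from which one can construct $A$ and extract heights, is available for every relevant field-automorphism extension, and (b) handling those $(S,p)$ where $p$ divides $|\bZ(S)|$, so that the exceptional part of the Schur multiplier actually interacts with $p$-local structure; the latter is precisely where exceptional covers can, in principle, behave differently from generic ones, and where the computation must be done most carefully.
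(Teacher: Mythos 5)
Your proposal follows essentially the same route as the paper: reduce via Lemma~\ref{lem:Op'} to a short explicit list coming from the table of exceptional Schur multipliers (only $\PSL_3(4)$, $\PSU_4(3)$, $\PSU_6(2)$, $\tw2B_2(8)$, $G_2(4)$, $\tw2E_6(2)$ admit nontrivial field automorphisms), dispose of abelian defect by Brauer's height zero theorem, and then settle the remaining blocks by computation combined with the same reductions used in Proposition~\ref{prop:spor}. One small logical wrinkle in your write-up: your ``$D$ cyclic'' case is already subsumed by the abelian case (cyclic groups are abelian, so BHZ forces $\mh(B)=\infty=\mh(D)$ there), and Proposition~\ref{prop:meta} would only be relevant when the covered block of $S$ has cyclic defect but $D$ itself is a non-abelian metacyclic extension --- which lands in your third case, not your second. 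Beyond that, the paper is slightly more surgical than your sketch: it uses \textsf{GAP} only to locate the blocks with $\mh(B)\ge 2$ (the $\PSL_3(4)$ variants with $p=2$), and then pushes the remaining verification onto prior results for principal blocks of non-exceptional covering groups (\cite[Thm~3.9]{BM15} in defining characteristic, \cite[Prop.~2.2]{FLZ} otherwise) via the Proposition~\ref{prop:spor}-style reductions, rather than recomputing $\mh(D)$ directly for every block; your plan would arrive at the same answer but with somewhat more computation.
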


\begin{proof}
The exceptional Schur multipliers of groups of Lie type are listed in
\cite[Tab.~24.3]{MT}. Note that the only cases for which non-trivial field
automorphisms exist are $\PSL_3(4)$, $\PSU_4(3)$, $\PSU_6(2)$, $\tw2B_2(8)$,
$G_2(4)$ and $\tw2E_6(2)$. Using \cite{gap} one sees that the only cases for
which there is a block with $\mh(B)\ge2$ are for $\PSL_3(4)$, $3.\PSL_3(4)$ and
$3.\PSL_3(4).2_2$ with $p=2$. In all cases coming up, the same kind of
reductions as in the proof of Proposition~\ref{prop:spor} show that the claim
follows once we know it for principal blocks of non-exceptional covering
groups of $\bF^*(A)/\bZ(\bF^*(A))$. These cases were settled in
\cite[Thm~3.9]{BM15} for the defining prime, respectively in
\cite[Prop.~2.2]{FLZ} for non-defining primes.
\end{proof}

\section{Groups of Lie type in defining characteristic}   \label{sec:defchar}

Throughout, let $\bG$ be a simple linear algebraic group over an algebraically
closed field~$k$ of characteristic~$p>0$. Let $\bT\le \bG$ be a maximal torus
contained in a Borel subgroup $\bB\le\bG$ and let $\bU$ be the unipotent
radical of~$\bB$.  We denote by $\Phi$ the root system of~$\bG$ with respect to
$\bT$, and by $\Phi^+\subset\Phi$ the set of positive roots with respect to
$\bB$. Write $\Delta\subset\Phi^+$ for the set of simple roots. For any
$\al\in\Phi^+$, we denote by $\bU_\al$ the corresponding root subgroup in $\bU$
normalised by $\bT$, and we choose an isomorphism
$x_\al:k\rightarrow \bU_\al$ (see e.g.~\cite[Sect.~8]{MT}).   \par
For any graph automorphism $\gamma$ of~$\bG$ and for any power
$q=p^f$, there is a Frobenius endomorphism $F:\bG\to\bG$ with the
property that $F(x_\al(c))=x_{\ga(\al)}(c^q)$ for all $\al\in\Phi$,
$c\in k$ (see \cite[Thm~1.15.2]{GLS}). We define $F_p:\bG\to\bG$,
$x_\al(c)\mapsto x_\al(c^p)$, a Frobenius map with respect to an
$\FF_p$-structure. Note that $F,F_p$ commute in their action on~$\bG$, and
$\bT,\bB$ and $\bU$ are $F$- and $F_p$-stable.
Let further $\bG^*$ be dual to $\bG$ with corresponding Frobenius endomorphisms
again denoted $F,F_p$ and let $G:=\bG^F$, $G^*:=\bG^{*F}$.

It was shown in \cite[Prop.~3.8]{MMR} that Conjecture~\ref{conj} holds for the
principal $p$-blocks of groups $A$ obtained by extending a simple group of Lie
type $S$ in characteristic~$p$ by a field automorphism of $p$-power order. We
now generalise this to all $p$-blocks of such groups.

\begin{prop}   \label{prop:field}
 Let $S$ be a quasi-simple group of Lie type in characteristic~$p$ with
 $\bZ(S)$ of $p'$-order, $\si$ a $p$-power order field automorphism of $S$ and
 $A:=S\langle\si\rangle$. Then Conjecture~\ref{conj} holds for all $p$-blocks
 of $A$.
\end{prop}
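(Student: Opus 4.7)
The plan is to extend \cite[Prop.~3.8]{MMR} from the principal $p$-block to all $p$-blocks of $A$. Since $A/S = \langle\sigma\rangle$ is a $p$-group, Lemma~\ref{lem:pfactor} shows each $p$-block $B$ of $A$ covers a unique $p$-block $b$ of $S$. By Lemma~\ref{lem:not-invariant} (together with a standard Fong--Reynolds reduction for proper partial stabilisers), we may assume $b$ is $A$-invariant; the non-invariant reductions land in the conjecture for $b$ as a block of $S$, which is handled by the same methods as in \cite[Prop.~3.8]{MMR} applied to $S$ directly. Defect-zero blocks being trivial, we may further assume $b$ has positive defect.

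By the classical defining-characteristic block theory of finite groups of Lie type, every non-defect-zero block of $S$ has full defect, with Sylow $p$-subgroup $U := \bU^F$ as defect group. Applying Lemma~\ref{lem:pfactor} again, the defect group $D$ of $B$ is a Sylow $p$-subgroup of $A$, of the form $D = U \rtimes \langle\sigma\rangle$. I would then run parallel Clifford analyses for the cyclic $p$-extensions $S \unlhd A$ and $U \unlhd D$: any $\chi \in \Irr(B)$ covers some $\chi_0 \in \Irr(b)$ and is induced from an extension of $\chi_0$ to the inertia $I_A(\chi_0)$, whence a routine computation gives that the height of $\chi$ in $B$ equals the height of $\chi_0$ in $b$ plus $\log_p[A : I_A(\chi_0)]$; the same formula applies to $\Irr(D)$ over $\Irr(U)$. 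Consequently, both $\mh(B)$ and $\mh(D)$ are minima of such ``height plus log-orbit-length'' sums over their respective parameter sets.

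The main step---and the principal obstacle---is to exhibit, for every $A$-invariant full-defect block $b$, a character $\chi_0 \in \Irr(b)$ whose height/orbit-length pair realises $\mh(D)$. For $b = B_0(S)$ this is the content of \cite[Prop.~3.8]{MMR}, via an explicit construction. For a general $A$-invariant $b$, determined by an $\langle\sigma\rangle$-fixed central character $\lambda \in \Irr(\bZ(S))$, the construction from the principal block should transfer: since $\bZ(S)$ is a $p'$-group disjoint from $U$, the minimising characters selected in the principal block admit analogues with central character $\lambda$ of the same degree and $\sigma$-orbit length, obtained via an auxiliary cover or by direct parametrisation of characters of $S$ indexed by their central character. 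The main verification is compatibility of this twist with the $\sigma$-action, which should follow from the rationality properties of the relevant characters under field automorphisms.
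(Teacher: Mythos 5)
Your framework is on the right track --- Humphreys' theorem on blocks of full defect, the Clifford-theoretic reduction to the case $A>S$ with $b$ an $A$-invariant non-principal block above a fixed $\sigma$-stable $\theta\ne1$ in $\Irr(\bZ(S))$, and the observation that $\mh(D)=1$ is already established by \cite[Prop.~3.8]{MMR} --- this all matches the paper's setup. The problem is that the single step you flag as ``the main step'' is exactly the content of the proposition, and your proposal does not actually carry it out; it only asserts that the principal-block construction ``should transfer.''

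Concretely, what is needed is a height-zero character $\chi_0\in\Irr(b)$ that is $\sigma^p$-invariant but not $\sigma$-invariant. In the principal-block case one takes the semisimple character labelled by a semisimple element $s\in[G^*,G^*]$ whose $G^*$-class has $\sigma$-orbit of length $p$. For the non-principal block $b$, the semisimple class must lie in the coset of $[G^*,G^*]$ in $G^*$ that corresponds to $\theta$ under duality. So one must replace $s$ by a product $st$ with $t\in C_{G^*}(s)$ lying in that coset, and --- crucially --- $t$ must be chosen $\sigma$-stable so that $st$ still has $\sigma$-orbit length exactly $p$. The existence of such a $t$ is not automatic: the paper proves it by a case analysis on the Dynkin type of $\bG$ (types $\tB,\tC,\tD,\tE_7$ use that $\sigma$ has odd order when $p$ is odd so that $C^\sigma$ contains a Sylow $2$-subgroup of $C_{\bG^*}(s)^F$; type $\tE_6$ needs a separate $3$-divisibility argument; type $\tA$ requires an explicit matrix construction and a separate argument to show $s$ itself can be taken $\sigma$-invariant before perturbing by $t$). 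None of this is captured by the appeal to ``rationality properties of the relevant characters under field automorphisms'' --- indeed, what one needs is precisely a character that is \emph{not} invariant under $\sigma$, while the rationality/invariance of most defining-characteristic characters under field automorphisms works against you here. The phrase ``obtained via an auxiliary cover or by direct parametrisation of characters of $S$ indexed by their central character'' does not describe an argument; without the explicit construction of $t$ and the verification that the resulting Lusztig series is moved by $\sigma$, the proof is incomplete at its central point.
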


\begin{proof}
If $A=S$, or if $B$ is the principal block of $A$ the claim was shown in
\cite[Thm~3.9]{BM15}, respectively in \cite[Prop.~3.8]{MMR}. So now assume $A>S$
and $B$ is a non-principal $p$-block of~$A$. We may also assume $B$ is not of
defect zero, so by \cite{Hum} we must have $\bZ(S)\ne1$. Then there exists a
simple algebraic group $\bG$ of simply connected type
with a Frobenius map $F$ and with an epimorphism $G=\bG^F\to S$ with central
kernel of $p'$-order, and all field automorphisms of $S$ lift to $G$. In
particular, any $p$-block of $S$ is also a $p$-block of~$G$, with isomorphic
defect groups. Thus, without loss, we may assume $S=G$. Let $B_S$ be a $p$-block
of $S$ covered by $B$. By \cite{Hum}, $B_S$ has as defect groups the Sylow
$p$-subgroups of $S$ and $\Irr(B_S)$ consists of all irreducible characters of
$S$ lying above a fixed character $\theta\ne1$ of $\bZ(S)$. By
Lemma~\ref{lem:not-invariant}, we may assume $B_S$, and hence $\theta$, is
$\si$-stable. Since $A/S$ is a $p$-group, then a Sylow $p$-subgroup $P$ of
$A$ is a defect group of~$B$ by Lemma~\ref{lem:pfactor}. For $A>S$, it was
shown in \cite[Prop.~3.8]{MMR} that $\mh(P)=1$. So it remains to prove that
$\mh(B)=1$.

Let $s\in[{G^*}^{\si^p},{G^*}^{\si^p}]$ be a semisimple element as constructed
in the proof of \cite[Prop.~3.8]{MMR} whose $G^*$-conjugacy class lies in an
orbit of length~$p$ under $\si$. Observe that by construction, $|s|$ does not
divide $|\bZ(G)|$, whence $C_{\bG^*}(s)$ is connected. Since $s$ is semisimple,
it lies in a maximal torus of $\bG^{*F}$ and thus its centraliser
$C:=C_{\bG^*}(s)^F$ generates $G^*$ over $[G^*,G^*]$. First assume $\bG$ has
type $\tB_n$ ($n\ge1$), $\tC_n$ ($n\ge3$), $\tD_n$ ($n\ge4$) or $\tE_7$. Since
$\bZ(G)\ne1$ we have $p\ne2$, so $\si$ has odd order and so the group $C^\si$
of fixed points of $C$ under $\si$ contains a Sylow 2-subgroup of~$C$, and
$[G^*:[G^*,G^*]]\in\{2,4\}$. Hence any coset of $[G^*,G^*]$ in $G^*$
contains a $\si$-stable (2-)element $t$ of $C_{G^*}(s)$, so in particular so
does the coset corresponding to $\theta$ under the duality map
\cite[Prop.~2.5.20]{GM20}. Then the class of $st$ also lies in an orbit of
length~$p$ under~$\si$. Let $\chi$ be a semisimple character of $S$ labelled
by $st$ (see e.g.~\cite[Def.~2.6.9]{GM20}). Then $\chi$ has $p$-height~0, lies
in $B_S$ by \cite{Hum}, and by Clifford theory all characters of~$A$ above
$\chi$ have height~1 and lie in~$B$, proving $\mh(B)=1$ in this case.

Now assume $\bG$ has type $\tE_6$, so $[G^*:[G^*,G^*]]=3$ and $p\ne3$. If
$C^\si$ contains a Sylow 3-subgroup of $C$ we can argue exactly as before.
Otherwise, we necessarily have $p=2$, and $\si$ makes an orbit of length two
on elements $t$ of 3-power order generating $C$ over $C\cap[G^*,G^*]$,
implying that $\theta$ is not $\si$-invariant, contrary to our assumption.

Finally, let $\bG$ be of type $\tA_{n-1}$ with $n\ge3$, so $G=\SL_n(\eps q)$
with $\eps\in\{\pm1\}$.
By assumption the coset of $[G^*,G^*]$ in $G^*$ corresponding to $\theta$ is
$\si$-stable. Let $\ze\in\FF_q^\times$ such that the image $s$ in
$\PGL_n(q)=G^*$ of $\diag(\ze,1,\ldots,1)\in\GL_n(q)$ lies in this coset. Then,
$s$ lies in the maximally split torus $T^*$ which we may assume to be
$\si$-stable, and the field automorphism~$\si$ acts by sending $s$ to some
power. Now looking at the eigenvalues we see that $s$ cannot be conjugate in
$G^*$ to any of its distinct powers, so in fact $s$ must be $\si$-invariant.
Write $T':= T^*\cap[G^*,G^*]$, a maximally split torus of $[G^*,G^*]$. Then
clearly $T'^\si$ is a proper subgroup of $T'^{\si^p}$ and we let
$t\in T'^{\si^p}\setminus T'^\si$. Then the class of $st$ lies in an orbit of
length~$p$ under~$\si$. Taking a semisimple character corresponding to this
class in $B_S$ then provides a character of height~1 above it in $B$.
The case of $\SU_n(q)$ is analogous.
\end{proof}

\begin{thm}   \label{thm:defchar}
 Conjecture~\ref{conj} holds for all $p$-blocks of groups $A$ with $\bF^*(A)$ a
 covering group of a simple group of Lie type in characteristic~$p$.
\end{thm}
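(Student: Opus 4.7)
The plan is to combine \cite[Thm~3.9]{BM15} (handling $A=S$) and Proposition~\ref{prop:field} (handling extensions by a field automorphism of $p$-power order) with the reductions of Section~\ref{sec:elem} and Humphreys's description \cite{Hum} of the $p$-blocks of groups of Lie type in defining characteristic.

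First I would apply Lemma~\ref{lem:Op'} to reduce to $A=O^{p'}(A)$, so that $A/S$ is generated by $p$-elements of $\Out(S)$, where $S:=\bF^*(A)$. Since the centre of a simply connected group of Lie type in defining characteristic~$p$ has order prime to $p$, and every $p$-element of $\Out(S)$ lifts to the simply connected cover (as in the proof of Proposition~\ref{prop:field}), I may further assume $S=\bG^F$ with $\bG$ simply connected. For $S$ of Lie type in characteristic~$p$, the $p$-elements of $\Out(S)$ are generated, modulo the $p'$-group of diagonal automorphisms, by field automorphisms of $p$-power order together with, when $p\in\{2,3\}$, certain graph (and graph-field) automorphisms of order~$p$.

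Let $B$ be a $p$-block of $A$ covering a block $B_S$ of $S$. By Lemma~\ref{lem:not-invariant} I may assume $B_S$ is $A$-stable, so that by \cite{Hum} it has either defect zero or full defect. If $B_S$ has defect zero, its unique character $\chi$ is $A$-invariant and extends to $A$, and the constituents of $B$ are obtained by twisting this extension by $\Irr(A/S)$; their heights match those of the corresponding characters of the defect group $D\cong A/S$, giving $\mh(B)=\mh(D)$. If $B_S$ has full defect, then by Lemma~\ref{lem:pfactor} a defect group of $B$ is a Sylow $p$-subgroup of $A$. The cases $A=S$ and $A=S\langle\sigma\rangle$ for $\sigma$ a field automorphism of $p$-power order are covered respectively by \cite[Thm~3.9]{BM15} and Proposition~\ref{prop:field}, while $\mh(D)=1$ for $A>S$ is supplied by \cite[Prop.~3.8]{MMR}. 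I would then proceed inductively on $|A/S|$, mimicking the construction in the proof of Proposition~\ref{prop:field} of a semisimple character in $B_S$ whose $A$-orbit has length exactly~$p$.

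The hard part will be adapting this construction to accommodate graph and graph-field automorphisms when $p\in\{2,3\}$: one must verify case by case, for the affected Lie types ($\tA_n$, $\tD_n$, $\tE_6$ when $p=2$ and $\tD_4$ when $p=3$), that a suitable semisimple element whose $G^*$-conjugacy class has orbit length~$p$ under the graph component can still be exhibited. A parallel issue arises in extending \cite[Prop.~3.8]{MMR} to show $\mh(P)=1$ for Sylow $p$-subgroups $P$ of $A$ when $A/S$ involves a graph component, though this should be tractable via the explicit description of such Sylow subgroups.
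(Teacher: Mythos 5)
The reduction steps you outline mirror the paper's (pass to $A=O^{p'}(A)$, lift to a simply connected cover with $|\bZ(G)|$ prime to $p$, invoke \cite[Thm~3.9]{BM15} and Proposition~\ref{prop:field} for the field-automorphism case). But the ``hard part'' you flag at the end --- constructing, for $p\in\{2,3\}$, a semisimple element whose $G^*$-class has orbit length $p$ under a graph or graph-field automorphism, and separately establishing $\mh(P)=1$ for Sylow $p$-subgroups of $A$ when $A/S$ has a graph component --- is exactly where your proposal stalls, and the paper avoids it entirely with an observation you have missed.

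The key point is that once you have invoked Lemma~\ref{lem:not-invariant} to assume the block $B_S$ is $A$-stable and Humphreys's result to know that a non-principal positive-defect block $B_S$ is labelled by a non-trivial character $\theta\in\Irr(\bZ(G))$, you only need to check that such a $\theta$ can never be fixed by a graph or graph-field automorphism of $p$-power order. For $p=3$ the only case is type $\tD_4$, where $\bZ(G)\cong C_2^2$ and the order-$3$ graph (and graph-field) automorphisms act fixed-point freely on $\bZ(G)\setminus\{1\}$; for $p=2$ the relevant types with a proper generic cover are $\tA_{n-1}$ and $\tE_6$, where $\bZ(G)$ has odd order and graph/graph-field automorphisms invert it, hence again fix no non-trivial character. (For types $\tD_n$, $\tB_2$, $\tF_4$ with $p=2$ one has $\bZ(G)=1$, so only the principal block arises, already covered by \cite[Prop.~3.8]{MMR}.) Consequently, whenever $A$ induces a graph or graph-field automorphism of $p$-power order there is simply no $A$-stable non-principal block of positive defect, and nothing remains to be proved. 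Your suggested case-by-case construction of semisimple elements equivariant under the graph part is both unnecessary and unlikely to go through as stated, since the construction in Proposition~\ref{prop:field} depends on the field-automorphism structure (maximally split tori, Frobenius twists) and has no direct analogue for triality or order-$2$ graph automorphisms. You also still need the argument for type $\tA$ and $\tE_6$ with diagonal-plus-field automorphisms (the paper spends a paragraph on this, passing through the unique block over $B_S$ of intermediate groups and using the $p'$-order of the diagonal part); mentioning only that diagonal automorphisms form a $p'$-group does not yet produce the $\si^p$-stable but not $\si$-stable character above $\chi'$.
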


\begin{proof}
Once again, note that we may assume $A=O^{p'}(A)$, by Lemma~\ref{lem:Op'}.
Let $S:=\bF^*(A)$. If $S$ is an exceptional covering group of $S/\bZ(S)$, the
claim is Proposition~\ref{prop:exc cov}, so we may assume $|\bZ(S)|$ is prime
to~$p$. Furthermore, by the result of \cite[Prop.~3.8]{MMR}, it suffices to
consider faithful blocks of groups $S$ with $\bZ(S)\ne1$.
The outer automorphisms of $p$-power order of simple groups of Lie type $S$ in
characteristic~$p$ are field, graph and graph-field automorphisms (see e.g.
\cite[Thm~24.24]{MT}). In particular, for $p\ge5$ only field automorphisms
occur. Furthermore, if $S$ is not of type $\tA$, these are central in $\Out(S)$
in which case our claim follows from Proposition~\ref{prop:field}.

Next assume $S$ is of type $\tA$ or $\tE_6$ and $A$ is a subgroup of the
group of field and diagonal automorphisms, and keep the notation $B_S$, etc.\
as in Proposition~\ref{prop:field}. As diagonal automorphisms have order prime
to $p$, then $A/S$ is a semidirect product of a group of diagonal automorphisms
by a group of field automorphisms, generated by $\si$, say. Let $S<A'\le A$ be
the extensions of $S$ in $A$ generated by the diagonal automorphisms only. By a
well-known result of Brauer (see e.g.\cite[Probl.~4.8]{N98}), the $p$-blocks of
maximal defect of any group $H$ between $S$ and $A'$ are in bijection with the
set of $p'$-elements centralising a Sylow $p$-subgroup of $H$ and thus with
$\bZ(S)$; in particular, there is a unique block of $H$ above $B_S$. Let
$\chi\in\Irr(B_S)$ be the character constructed in the proof of
Proposition~\ref{prop:field}, so $\chi$ is $\si^p$-invariant but not
$\si$-stable. Now $|\Irr(A'|\chi)|$ is a divisor of $[A':S]$, hence prime
to~$p$. So it contains a $\si^p$-invariant character $\chi'$. By the
construction of $\chi$, $\chi^\si$ lies in a different Lusztig series, so
$\chi'$ cannot be $\si$-invariant and thus any character of $A$ above $\chi'$
is as required.
\par
For $p=3$ the only groups $S$ with additional $p$-automorphisms are those of
untwisted type~$\tD_4$, for which $\Out(S)\cong\fS_4\times C_f$.
Thus $A/S\le \fA_4\times C_{3^k}$ with $3^k|f$. By Proposition~\ref{prop:field}
we may assume that $A$ induces some non-field automorphisms of 3-power order.
Now note that non-trivial graph and graph-field automorphisms act fixed-point
freely on the centre $\bZ(G)\cong C_2^2$ of our group $G$ of simply connected
type. Hence $A$ cannot fix any $p$-block of~$G$ above a non-trivial central
character. Thus we are done by the above reduction.

If $p=2$ then additional graph automorphisms of order~2 exist for types
$\tA_n$, $\tD_n$ and $\tE_6$, as well as exceptional graph automorphisms for
types $\tB_2$ and $\tF_4$. Now the groups of types $\tD_n$, $\tB_2$ and
$\tF_4$ do not have proper generic covering groups in characteristic~$p=2$,
so they are already handled by \cite[Prop.~3.8]{MMR}.
For $\SL_n(\eps q)$ and $\tE_6(\eps q)$ the graph- and graph-field automorphisms
act by inversion on the centre (of odd order) of the simply connected group, so
again there are no invariant non-trivial characters of $\bZ(G)$ in
characteristic~$2$.
\end{proof}

\section{Groups of Lie type in non-defining characteristic}   \label{sec:crosschar}
With the results of the previous sections, note that the only remaining
obstacle to Theorems~\ref{thm:quasi} and~\ref{thm:aut} is the case of groups of
Lie type in non-defining characteristic. Throughout this section, we fix the
following setup. Let $\bG$ be a simple linear algebraic group of
simply connected type with a Steinberg map $F:\bG\to\bG$ and set $G:=\bG^F$ the
corresponding finite reductive group. We let $q$ denote the absolute
value of the eigenvalues of $F$ on the character group of an $F$-stable maximal
torus of $\bG$; so $F^2$ is a Frobenius map with respect to an
$\FF_{q^2}$-structure. We also let $(\bG^*,F)$ be in duality with $(\bG,F)$ and
set $G^*:=\bG^{*F}$. We let $\ell$ be a prime not dividing $q^2$.

\subsection{Blocks and field automorphisms}

We will need the following extension to Steinberg maps of a result shown for
Frobenius maps in \cite{MNST}:

\begin{prop}   \label{prop:4.8}
 Let $\si$ be a field automorphism of $G=\bG^F$ of order $\ell$, and let
 $\ga =\si\tau$, where $\tau$ is an inner-diagonal automorphism of $G$. Then
 any $\ell$-block of $G$ of non-central defect contains characters in a
 Lusztig series that is not $\ga$-stable.
\end{prop}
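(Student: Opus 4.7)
The key observation is that any inner-diagonal automorphism $\tau$ of $G$ preserves each Lusztig series of $G$ setwise, so $\ga$-stability of a Lusztig series coincides with $\si$-stability. The task therefore reduces to exhibiting a Lusztig series meeting $B$ that is not $\si$-stable. By Bonnaf\'e--Rouquier, write $\Irr(B)\subseteq \bigcup_t \cE(G,st)$ for $s\in G^{*F}$ a semisimple $\ell'$-element and $t$ running over $\ell$-elements of $C_{G^*}(s)^F$, up to appropriate conjugacy. The series $\cE(G,st)$ is $\si$-stable iff the $G^{*F}$-class of $st$ is fixed by $\si^*$. If $[s]$ itself is not $\si^*$-fixed, taking $t=1$ suffices; otherwise I would replace $s$ by a $\si^*$-fixed representative, which exists since $\si^*$ has $\ell$-power order while $\langle s\rangle$ has $\ell'$-order.

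Non-central defect of $B$ translates, via the Brou\'e--Michel defect group description, into $C_{G^*}(s)^F$ containing non-central $\ell$-elements. I would then choose a $\si^*$-stable $F$-stable maximal torus $\bT^*\le C_{\bG^*}^\circ(s)$ containing such an $\ell$-element; this is possible because $\si^*$ acts with $\ell$-power order on the finite set of $F$-stable maximal tori of $C_{\bG^*}^\circ(s)$. The goal is then to produce an $\ell$-element $t\in (\bT^*)^F$ whose class in $C_{G^*}(s)^F$ is not $\si^*$-fixed.

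The main obstacle is exhibiting such $t$ when $F$ is Steinberg but not Frobenius, that is, for Suzuki and Ree groups. My plan there is to pass to the ambient Frobenius endomorphism $F^2$: $\bT^*$ is $F^2$-stable, $\si^*$ extends to an automorphism of $\bG^{*F^2}$ of the same order $\ell$, and the Frobenius case \cite{MNST} applied inside $\bG^{*F^2}$ yields a non-$\si^*$-fixed $\ell$-element $t\in(\bT^*)^{F^2}$. I would then descend to $(\bT^*)^F$, using that for Suzuki and Ree types the relevant tori and $\ell$-subgroups are explicit. The most delicate step is to ensure that non-$\si^*$-fixedness survives the descent: I would attempt a direct construction exploiting $\si$-equivariance of the norm map $(\bT^*)^{F^2}\to(\bT^*)^F$, falling back on a short case check across $\tw{2}\tB_2$, $\tw{2}\tG_2$, and $\tw{2}\tF_4$ if the general argument does not immediately apply.
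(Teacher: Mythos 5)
Your reduction of $\ga$-stability to $\si$-stability is correct and matches the paper, and delegating the Frobenius case to \cite[Prop.~4.8]{MNST} is exactly what the paper does. However, your treatment of the Steinberg case (Suzuki and Ree groups) has genuine gaps and differs in substance from the paper's. The paper's argument observes that $\si$ is induced by a Steinberg map $F_0$ with $F=F_0^\ell$ and $q=q_0^\ell$, so that for the relevant cyclotomic polynomial $\Phi$ over $\QQ(\sqrt p)$ one has $\ell\mid\Phi(q_0)$ whenever $\ell\mid\Phi(q)$; consequently every $\Phi$-torus of $(\bG,F_0)$ is a $\Phi$-torus of $(\bG,F)$ and every $F_0$-stable $\Phi$-torus of $(\bG,F)$ is a $\Phi$-torus of $(\bG,F_0)$. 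This lets the \cite{MNST} $\Phi$-torus construction be run directly at the level of $F_0$, producing a $\si^*$-stable cyclic $\Phi$-torus $\bT_1\le\bG^*$ and a non-$\si^*$-fixed $\ell$-element of $\bT_1^F$, after which one checks $C_{\bG^*}((\bT_1^F)_\ell)=C_{\bG^*}((\bT_1)_\Phi)$ using \cite[Tab.~3.3]{GM20}. Your proposal to pass to $F^2$ and descend is not sound as stated: $F^2=F_0^{2\ell}$, so the field automorphism of $\bG^{F^2}$ induced by $F_0$ has order $2\ell$, not $\ell$ (and using $F_0^2$ instead gives $\si^2$ on $\bG^F$, not $\si$); moreover, \cite[Prop.~4.8]{MNST} applied to $\bG^{F^2}$ is a statement about $\ell$-blocks of the \emph{larger} group $\bG^{F^2}$, whose relation to blocks of $\bG^F$ you do not establish; and finally, the norm map $(\bT^*)^{F^2}\to(\bT^*)^F$ can shrink $\si^*$-orbits and merge $G^{*F}$-classes, so the crucial ``non-$\si^*$-fixed'' property need not survive descent. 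You flag this last issue yourself, but the proposed fallback of a case check is not carried out and does not constitute a proof. Also, your choice of a $\si^*$-stable $F$-stable maximal torus containing a given non-central $\ell$-element by an orbit-counting argument is not justified: orbit counting gives existence of $\si^*$-stable tori but not one through the desired element, and the actual construction requires the $\Phi$-torus machinery. In short, the Frobenius skeleton is right, but the Steinberg adaptation needs the $\Phi$-torus comparison between $(\bG,F)$ and $(\bG,F_0)$ that the paper supplies, rather than a detour through $F^2$.
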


\begin{proof}
When $F$ is a Frobenius map, this was shown in \cite[Prop.~4.8]{MNST}. We argue
that the same proof goes through for Steinberg maps. Thus we may assume $\bG$
is of type $\tB_2$, $\tG_2$, or $\tF_4$ in characteristic $p=2,3,2$
respectively. In particular, $G$ has no non-inner diagonal automorphisms and so
$\ga=\si$. Moreover, $G$ also has no field automorphisms of even order, so
$\ell$ is necessarily odd.

The $\ell$-blocks of $G$ are associated to $\Phi$-cuspidal pairs $(\bL,\la)$,
where $\Phi$ is a cyclotomic polynomial over $\QQ(\sqrt{p})$ (as in
\cite[3.5.3]{GM20}) such that $\ell$ divides $\Phi(q)$. By assumption, up to
conjugation, $\si$ is induced by a Steinberg map $F_0$ of $\bG$ such that
$F=F_0^\ell$, so $F_0^2$ induces an $\FF_{q_0^2}$-structure, with $q=q_0^\ell$.
Then $\ell$ also divides $\Phi(q_0)$, whence any $\Phi$-torus of
$(\bG, F_0)$ is also a $\Phi$-torus of ($\bG,F)$, and conversely, any
$F_0$-stable $\Phi$-torus of $(\bG,F)$ is also a $\Phi$-torus of $(\bG, F_0)$.

We can now copy the argument given in the proof of \cite[Prop.~4.8]{MNST} to
construct a cyclic $\si^*$-stable $\Phi$-torus $\bT_1$ of $\bG^*$ and an
$\ell$-element
$t\in\bT_1^F$ that is not $\si^*$-stable. Note that the construction of the
automorphism $\si^*$ of $\bG^*$ in \cite{Tay} does not require $F$ to be a
Frobenius map. Using the table in \cite[Tab.~3.3]{GM20} of $\Phi$-split Levi
subgroups (and the classification of isolated elements when $\ell$ is bad for
$\bG$, which only happens for $\ell=3$ and $\bG$ of type $\tF_4$), we see that
$C_{\bG^*}((\bT_1^F)_\ell) =C_{\bG^*}((\bT_1)_\Phi)$. The
rest of the argument is then completed as in \cite{MNST}.
\end{proof}

The following consequence of Proposition~\ref{prop:4.8} will allow us to deal
with blocks of $G$ with abelian defect.

\begin{prop}   \label{prop:abelian}
 Let $\ell\ge3$ and let $A=G\langle\si\rangle$ where $\si$ is a non-trivial
 $\ell$-power order field automorphism of~$G$. Assume that $B$ is a
 $\si$-stable $\ell$-block of $G$ with non-central abelian defect. Then
 $\mh(B_A)=1$ for the $\ell$-block $B_A$ of $A$ covering $B$.  If further
 $G=\SL_n(\eps q)$ and $B$ has non-cyclic defect groups, then a character in
 $B_A$ with height one can be chosen to lie above a character of $G$ trivial
 on $\bZ(G)_\ell$. 
\end{prop}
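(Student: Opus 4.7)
My plan is to reduce the height-$1$ statement to a combinatorial question about $\si$-orbits on $\Irr(B)$, settle the case $|\si|=\ell$ directly via Proposition~\ref{prop:4.8}, and then lift to higher $\ell$-power orders via Jordan decomposition and an argument inside the dual centraliser.

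Since $A/G\cong\langle\si\rangle$ is an $\ell$-group and $B$ is $\si$-stable, Lemma~\ref{lem:pfactor} produces a unique $\ell$-block $B_A$ of $A$ covering $B$, with $d(B_A)=d(B)+k$ where $\ell^k=|\si|$; in particular $|D_A|/|A|=|D|/|G|$. As $D$ is abelian, every $\chi\in\Irr(B)$ has height zero by Brauer's height zero theorem. Because $A/G$ is cyclic, any such $\chi$ extends to its stabiliser $T_\chi\le A$, and Clifford theory yields that each $\psi\in\Irr(A\mid\chi)$ satisfies $\psi(1)=[A:T_\chi]\chi(1)$ and hence has height $v_\ell([A:T_\chi])$ in $B_A$. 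Proving $\mh(B_A)=1$ therefore reduces to exhibiting $\chi\in\Irr(B)$ whose $\langle\si\rangle$-orbit has length exactly~$\ell$; equivalently, $\chi$ is $\si^\ell$-invariant but not $\si$-invariant.

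When $|\si|=\ell$, this is immediate from Proposition~\ref{prop:4.8} applied with $\ga=\si$ and trivial inner-diagonal part: the character produced there lies in a non-$\si$-stable Lusztig series, so $\chi^\si\ne\chi$, and the orbit is forced to have length $\ell$. For $|\si|=\ell^k$ with $k\ge2$, I would pass to the dual group. Let $s_B\in G^*$ be a $\si^*$-stable semisimple $\ell'$-element labelling $B$; characters of $B$ arise by Jordan decomposition from pairs $(t,\vhi)$ with $t$ an $\ell$-element of $C:=C_{G^*}(s_B)^F$ commuting with $s_B$ and $\vhi$ a unipotent character in a prescribed block of $C_{G^*}(s_B t)^F$. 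Since $\si^*$ fixes $s_B$, the $\langle\si\rangle$-orbit of the semisimple character $\chi_{s_B t}\in\cE(G,s_B t)\cap B$ matches the $\si^*$-orbit of the $C$-class of $t$. So the task becomes: find an $\ell$-element $t\in C$ whose $C$-class has $\si^*$-orbit of length exactly~$\ell$. I would obtain such $t$ by adapting the construction in Proposition~\ref{prop:4.8} inside the connected reductive subgroup $C_{\bG^*}(s_B)$---produce a $\si^{*\ell}$-stable $\Phi$-torus on which $\si^*$ has an orbit of length $\ell$ on a suitable $\ell$-element---using that $B$ having non-central abelian defect translates into $C$ possessing a non-central abelian Sylow $\ell$-subgroup.

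For the $\SL_n(\eps q)$ refinement with non-cyclic defect, the non-cyclicity ensures that the relevant $\ell$-torus of $C_{\bG^*}(s_B)$ has rank at least~$2$, leaving enough freedom to simultaneously impose $s_B t\in[G^*,G^*]$. Via the duality $\Irr(\bZ(G))\cong G^*/[G^*,G^*]$ from \cite[Prop.~2.5.20]{GM20}, this forces $\chi_{s_B t}$---and hence the chosen character of $A$ lying above it---to be trivial on $\bZ(G)_\ell$. The principal obstacle is the $k\ge2$ step: Proposition~\ref{prop:4.8} is stated only for field automorphisms of order exactly~$\ell$, and adapting its proof to produce an orbit of \emph{precisely} length~$\ell$ (rather than $\ell^k$) inside $C_{\bG^*}(s_B)$ requires careful analysis of the $\si^*$-stable $\Phi$-tori and the available $\ell$-elements therein, together with the verification that the resulting semisimple character actually lies in the given block $B$ rather than merely in the Lusztig series $\cE(G,s_B t)$.
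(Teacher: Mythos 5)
Your overall strategy is the right one---reduce to exhibiting $\chi\in\Irr(B)$ whose $\langle\si\rangle$-orbit has length exactly $\ell$, and then get such a $\chi$ from the dual side---and your Clifford-theoretic height computation, together with the $|\si|=\ell$ case, is fine. However, the step you flag as ``the principal obstacle'' ($|\si|=\ell^k$ with $k\ge 2$) is exactly where the paper's proof has a concrete idea you are missing, so the proposal as written has a genuine gap there.

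The resolution is more elementary than the ``careful analysis of $\si^*$-stable $\Phi$-tori inside $C_{\bG^*}(s_B)$'' you sketch. The construction in the proof of \cite[Prop.~4.8]{MNST} (and its Steinberg-map extension, Proposition~\ref{prop:4.8}) goes through verbatim for a field automorphism of arbitrary $\ell$-power order: it produces a $\si^*$-stable $\Phi$-torus $\bT_1^*\le\bG^*$ of \emph{rank one} on whose Sylow $\ell$-subgroup $P=(\bT_1^{*F})_\ell$ the $\ell$-element $\si^*$ acts non-trivially. Since $\bT_1^*$ has rank one, $P$ is cyclic, and then---precisely as in Lemma~\ref{lem:meta}---a non-trivial action of an $\ell$-element on a cyclic $\ell$-group (with $\ell$ odd) automatically yields some $t\in P$ with $\si^*$-orbit of length exactly $\ell$; there is nothing further to construct or to iterate, and no need to re-run the argument inside $C_{\bG^*}(s_B)$. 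One then arranges $t\in C_{G^*}(s)$ as in \cite[Prop.~4.8]{MNST}, invokes \cite[Lemma~3.13]{MNST} to get $\chi\in\cE(G,st)\cap\Irr(B)$ (this is the block-membership point you rightly worry about), and the $\si$-stability of the $G^*$-class of $s$ then pins down the $\si$-orbit of $\chi$ to length $\ell$. Your $\SL_n$ heuristic (``rank at least two leaves room to impose $st\in[G^*,G^*]$'') is in the right spirit, but the paper simply cites \cite[Prop.~4.11]{MNST} for that refinement rather than redoing it; you would still need to carry out that argument.
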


\begin{proof}
Since $\si\ne1$ by assumption, by Proposition~\ref{prop:4.8} there are
characters
of~$G$ in $B$ not stable under $\si$, so since $A/G$ is an $\ell$-group, the
unique block $B_A$ of $A$ covering $B$ contains characters of positive height.
More specifically, let $s\in G^*$ such that $\Irr(B)\subseteq\cE_\ell(G,s)$.
In the proof of \cite[Prop.~4.8]{MNST}, respectively Proposition~\ref{prop:4.8},
we constructed a $\si^*$-stable $\Phi$-torus $\bT_1^*\le\bG^*$ of rank~1 (where
$\Phi=\Phi_e$ with $e=e_\ell(q)$ when $F$ is a Frobenius map) and showed that
the automorphism $\si^*$ of $G^*$
of $\ell$-power order induced by $\si$ acts non-trivially on the Sylow
$\ell$-subgroup $P$ of $\bT_1^{*F}$. Thus there exists an $\ell$-element
$t\in P$ whose $\si^*$-orbit has length~$\ell$. As argued in the proof
of \cite[Prop.~4.8]{MNST}, we may assume that $t\in C_{G^*}(s)$. Then by
\cite[Lemma~3.13]{MNST} there is $\chi\in\cE(G,st)\cap\Irr(B)$. (Again this
was formulated only for Frobenius maps, but the proof extends verbatim to
Steinberg maps.) Since $B$ is $\si$-stable, so is the $G^*$-class of $s$, and
then the argument in the proof of \cite[Prop.~4.8]{MNST} shows that $\chi$ lies
in an orbit of length~$\ell$ under $\si$. Thus any character of $A$ above $\chi$
has height~1 in~$B_A$.

In the case that $G=\SL_n(\eps q)$ and the defect groups of $B$ are non-cyclic,
using \cite[Prop.~4.11]{MNST} gives that $\chi$ can be chosen to be trivial on
$\bZ(G)_\ell$.
\end{proof}

\subsection{Unipotent blocks}
We now first consider unipotent blocks. For this, recall from \cite[Thm~A]{En00}
that if $F$ is a Frobenius map, any unipotent $\ell$-block of $G$ is labelled
by a unipotent $e$-cuspidal pair $(\bL,\la)$ of $G$ of central $\ell$-defect,
where $e$ is the order of~$q$ modulo~$\ell$, respectively modulo~$4$ if $\ell=2$
(which we denote $e_\ell(q)$). The following strengthens \cite[Lemma~2.8]{KM17}:

\begin{lem}   \label{lem:unipdef}
 Assume $F$ is a Frobenius map.
 Let $(\bL,\la)$ be a unipotent $e$-cuspidal pair of $G=\bG^F$ of central
 $\ell$-defect, where $e=e_\ell(q)$, such that $\ell$ divides
 $|W_{\bG^F}(\bL,\la)|$. Then $W_{\bG^F}(\bL,\la)$ has an irreducible character
 $\phi$ with $\phi(1)_\ell=\ell$, unless one of the following holds:
 \begin{enumerate}[\rm(1)]
  \item We have $\ell=3$, and either $W_{\bG^F}(\bL,\la)\cong\fS_{3c}$,
   $\bG^F=\SL_{3c}(\eps q)$ with $3|(q-\eps)$ and $c\in\{1,2\}$, or
   $\bG^F=E_6(\eps q)$, $W_{\bG^F}(\bL,\la)\cong\fS_3$ and $(\bL,\la)$
   corresponds to Line~$8$ of the $E_6$-tables of \cite[pp.~351,~354]{En00}.
  \item We have $\ell=2$, $W_{\bG^F}(\bL,\la)\cong\fS_2$, and either $\bG$ is of
   classical type, or $\bG$ is of type $E_7$ and $(\bL,\la)$ corresponds to one
   of Lines~$3$ or~$7$ of the $E_7$-table of \cite[p.~354]{En00}.
 \end{enumerate}
\end{lem}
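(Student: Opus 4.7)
The plan is to go through the explicit classification of unipotent $e$-cuspidal pairs $(\bL,\la)$ of central $\ell$-defect together with their relative Weyl groups $W:=W_{\bG^F}(\bL,\la)$: for classical types this is supplied by the work of Fong--Srinivasan and Brou\'e--Malle--Michel, and for exceptional types it is tabulated explicitly in Enguehard \cite[pp.~347--354]{En00}. In every case $W$ is a complex reflection group --- typically a wreath product $G(d,1,a)\cong C_d\wr\fS_a$ in the classical case and a small exceptional complex reflection group in the exceptional case. For each such $W$ with $\ell\mid|W|$ we aim to exhibit an irreducible character $\phi\in\Irr(W)$ with $\phi(1)_\ell=\ell$.

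For $W=G(d,1,a)$, the irreducibles are parametrised by $d$-tuples $\underline\mu=(\mu_1,\ldots,\mu_d)$ of partitions with $\sum|\mu_i|=a$, of degree
$$\phi_{\underline\mu}(1)\;=\;\binom{a}{|\mu_1|,\ldots,|\mu_d|}\,\prod_{i=1}^{d} f^{\mu_i},$$
where $f^{\mu_i}$ is given by the hook-length formula. Standard manipulations of $\ell$-hooks and $\ell$-cores produce, for $W$ large enough, a $\underline\mu$ with $\phi_{\underline\mu}(1)_\ell=\ell$; in particular, when $W=\fS_a$ with $a$ sufficiently large such a character is available. The small-rank failures are exactly $W\cong\fS_2$ (for $\ell=2$) and $W\cong\fS_3,\fS_6$ (for $\ell=3$), as one checks directly from the degree tables ($\fS_3$ has degrees $1,1,2$ and $\fS_6$ has all degrees with $3$-part in $\{1,9\}$). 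These yield the classical parts of the listed exceptions --- e.g.\ for $\SL_{3c}(\eps q)$ at $\ell=3$, only $c\in\{1,2\}$ fails, since already $\fS_9$ has the character indexed by $[6,2,1]$ of degree $105$, with $3$-part exactly $3$.

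For exceptional $\bG$ we proceed row by row through \cite[pp.~347--354]{En00}. In each row with $\ell\mid|W|$, the group $W$ is one of a short list of small complex reflection groups (e.g.\ $G_4, G_5, G_8, G_{25}, G_{26}, G_{32}$ or small Weyl groups) whose character degrees are tabulated in CHEVIE/GAP. A direct check identifies the only rows in which no character of $\ell$-part exactly $\ell$ exists as Line~$8$ of the $E_6$-tables (for $\ell=3$ with $W\cong\fS_3$) and Lines~$3$ and~$7$ of the $E_7$-table (for $\ell=2$ with $W\cong\fS_2$), matching the exceptions in the statement.

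The main obstacle will be the combinatorial book-keeping: one must cross-reference the precise shape of $\bL$, the twist of~$F$, and the congruence of $q$ modulo~$\ell$ against Enguehard's tables for exceptional types, while in the classical case one must verify carefully that $\fS_2, \fS_3, \fS_6$ really are the only small relative Weyl groups for which no character of $\ell$-part exactly $\ell$ exists. A subsidiary difficulty is identifying $W_{\bG^F}(\bL,\la)$ explicitly as a complex reflection group in each classical-group case, since the twist and the structure of $\bL$ can alter $W$ away from the naive wreath product; once this is pinned down, however, the hook-length analysis sketched above produces the desired $\phi$ outside the listed exceptions.
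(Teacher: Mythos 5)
Your overall strategy matches the paper's: run through the classification of unipotent $e$-cuspidal pairs of central $\ell$-defect (Brou\'e--Malle--Michel for the generic description, Enguehard for the exceptional-type tables), identify the relative Weyl group $W$ as $C_d\wr\fS_a$ in the classical cases or as one of a short list in the exceptional cases, and then produce, or fail to produce, a character $\phi$ of $W$ with $\phi(1)_\ell=\ell$. The places where you land on the exceptions agree with the paper, and your observations about $\fS_3$, $\fS_6$ and the worked example $[6,2,1]\vdash 9$ are correct.

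However, there is a real gap in your treatment of large symmetric-group factors. For $W=\fS_a$ with $a\ge\ell^2$, you assert that ``standard manipulations of $\ell$-hooks and $\ell$-cores produce'' a character of $\ell$-part exactly $\ell$. This is not a routine hook-length exercise. The paper's proof splits the case $W\cong\fS_a$ in two: when $\ell\le a<\ell^2$, it chooses an $\ell$-core of size $b$ with $\ell\le b<2\ell$ and $a\equiv b\pmod\ell$ (whose existence for $\ell\ge5$ rests on Granville--Ono \cite{GO96}), so that the corresponding $\ell$-block of $\fS_a$ has abelian defect and all of its height-zero degrees have $\ell$-part exactly $\ell$; but when $a\ge\ell^2$ the Sylow $\ell$-subgroup of $\fS_a$ is non-abelian, the weight-$w$ blocks with $w<\ell$ no longer have the right $\ell$-defect, and the paper instead appeals to the already-established Eaton--Moret\'o conjecture for $\fS_a$ \cite[Thm~2.1]{BM15} applied to the principal block. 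Your sketch neither invokes this nor gives a substitute, and a direct combinatorial construction is not obviously available (e.g.\ weight-one blocks fail for $a\ge3\ell$). You should either cite \cite{BM15} as the paper does, or supply an explicit hook-length construction valid for all $a\ge\ell^2$.

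A smaller point: your claim that ``the small-rank failures are exactly $W\cong\fS_2$, $\fS_3$, $\fS_6$'' is stated purely in terms of the isomorphism type of $W$, but the reason the exceptions in~(1) occur only for type~$\tA$ is that in types $\tB$, $\tC$, $\tD$ the base group of the wreath product $C_d\wr\fS_a$ has $d$ even, so these groups are never isomorphic to $\fS_3$ or $\fS_6$ and, moreover, $C_d\wr\fS_3$ with $d\ge2$ even has a degree-$6$ (or degree-$3$) character with $3$-part exactly~$3$. You reach the correct list of exceptions, but the paper makes this reasoning explicit for types $\tB$, $\tC$ and then handles type $\tD$ via Clifford theory for the index-two subgroup, which your sketch omits.
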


\begin{proof}
First assume $\bG$ is of exceptional type, or that $\bG^F=\tw3D_4(q)$. The
relative Weyl groups $W_{\bG^F}(\bL,\la)$ of unipotent $e$-cuspidal pairs are
listed in \cite[Table~1]{BMM}, and an easy check shows that in the very few
cases when $\ell$ does divide $|W_{\bG^F}(\bL,\la)|$, they do possess characters
$\phi$ with $\phi(1)_\ell=\ell$, unless either $\ell=3$, $\bG$ is of type $E_6$
and we are in case~(1) of the conclusion, or $\ell=2$ and
$W_{\bG^F}(\bL,\la)\cong C_2$ in $\bG$ of type $E_6$, $E_7$, or $E_8$. According
to the tables in \cite[pp.~351,~354,~358]{En00}, the only case with $\la$ of
central $\ell$-defect is in $E_7$ with $\bL$ of type $E_6$ and $\la$ one of the
two cuspidal characters as in~(2).
\par
Next assume that $\bG^F$ is of untwisted type $\tA$. Here, the relative Weyl
groups are wreath products $C_e\wr\fS_a$ for some $a\ge1$ (see
\cite[Exmp.~3.5.29]{GM20}). If $a\ge\ell^2$ then Sylow $\ell$-subgroups
of~$\fS_a$ are non-abelian and the claim follows from the validity of the
EM-conjecture for the principal block of~$\fS_a$ \cite[Thm~2.1]{BM15}.
Assume $a<\ell^2$. If $\ell=2$ and either $a=3$ or $e=2$ then $C_e\wr\fS_a$
has a character as claimed. The case $\ell=2=ae$ is excluded by~(2).
So we have $\ell>2$. Then $e<\ell$ by definition, so if $\ell$ divides
$|W_{\bG^F}(\bL,\la)|$ then $\ell\le a$.  If $2<\ell\le a<\ell^2$ then let
$\mu$ be an $\ell$-core of size $b$, where $\ell\le b<2\ell$ with
$a\equiv b\pmod\ell$. Then any character of height~0 in the $\ell$-block of
$\fS_a$ labelled by $\mu$ has the required property. Such cores exist unless we
are in~(1) of the conclusion, see \cite{GO96}.
If $\bG^F$ is a unitary group, the same argument applies, except that here the
relative Weyl groups have the form $C_d\wr\fS_a$ with $d =e_\ell(-q)$
(see \cite[Exmp.~3.5.29]{GM20}).
\par
For $\bG$ of type $\tB$ or $\tC$, the relative Weyl groups have the form
$C_d\wr\fS_a$, with $d\in\{e,2e\}$ even (see \cite[Exmp.~3.5.29]{GM20}), and by
our above argument, the factor group $\fS_a$ already has a suitable character,
unless we are in one of the exceptions for $\ell=3$, namely $a=3,6$ and $d=2$,
or for $\ell=2$ with $d=2$, $a=1$, excluded by~(2). In case $\ell=3$,
$C_d\wr\fS_a$ has an irreducible character of degree~6. Finally,
for $\bG$ of type $\tD$, the relative Weyl groups occur as normal subgroups of
index at most~2 in relative Weyl groups of type $\tB$-groups, considered
above, and so inherit a character of the required form by Clifford theory.
\end{proof}

Let $S$ be a central quotient of $G$. A block $B$ of $S$ is called
\emph{unipotent} if it is dominated by a unipotent block of $G$. The case $A=S$
of the following result was shown in \cite[Thm~4.4]{FLZ}.

\begin{prop}   \label{prop:unip class}
 Let $\ell$ be a prime and let $S$ be a non-exceptional covering group of a
 finite simple group of classical type in characteristic not~$\ell$. Let
 $A=S\langle\si\rangle$, where $\si$ is an $\ell$-power order field
 automorphism of~$S$. Then $\mh(\tilde B)=1$ for any $\ell$-block $\tilde B$ of
 $A$ of non-abelian defect covering a unipotent block of $S$.
\end{prop}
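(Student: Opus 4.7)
The plan is to split into two cases depending on whether the defect group $D$ of the $\ell$-block $B$ of $S$ covered by $\tilde B$ is abelian. Let $\tilde D$ be a defect group of $\tilde B$ containing $D$; since $A/S$ is a cyclic $\ell$-group, so is $\tilde D/D$. As $\tilde D$ is non-abelian by hypothesis, either $D$ itself is non-abelian, or $D$ is abelian and $\si$ acts non-trivially on $D$.

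If $D$ is abelian, then it is necessarily non-central (else $\tilde D$ would be cyclic), and $\si$ must act non-trivially on it. Proposition~\ref{prop:abelian} applied to the unique $\ell$-block of $A$ covering $B$, which is $\tilde B$, then yields $\mh(\tilde B)=1$ directly.

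Suppose now that $D$ is non-abelian. By \cite[Thm~4.4]{FLZ} we have $\mh(B)=1$, but to lift a height-one character of $B$ to one of the same height in $\tilde B$, Clifford theory requires the underlying character of $B$ to be $\si$-stable. To secure such a character, pass to the simply connected cover $G=\bG^F$ of $S$ (whose kernel has $\ell'$-order since $S$ is non-exceptional and has $\ell$ in its order), and let $B_G$ be the unipotent $\ell$-block of $G$ dominating $B$, labelled by an $e$-cuspidal pair $(\bL,\la)$ per \cite{En00}. Since the defect of $B_G$ is also non-abelian, $\ell$ divides $|W_{\bG^F}(\bL,\la)|$. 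Lemma~\ref{lem:unipdef} then provides $\phi\in\Irr(W_{\bG^F}(\bL,\la))$ with $\phi(1)_\ell=\ell$, and via $e$-Harish-Chandra theory the associated unipotent character $\chi_\phi\in\Irr(B_G)$ has $\ell$-height one in $B_G$. Since unipotent characters are fixed under field automorphisms, $\chi_\phi$ is $\si$-stable; factoring through $S$ yields a $\si$-stable height-one character of $B$, and Clifford theory then supplies a height-one character in $\tilde B$ as required.

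The main obstacle is the exceptional case of Lemma~\ref{lem:unipdef} for classical type, namely $\ell=2$ and $W_{\bG^F}(\bL,\la)\cong\fS_2$, in which no irreducible character of $W$ has $\ell$-degree exactly $\ell$. The defect group of $B_G$ in this case is a controlled extension of an abelian group (the $2$-part of $\bZ(\bL)^F$) by $C_2$, so one expects to extract a $\si$-stable height-one character either from a non-unipotent Lusztig series $\cE(G,s)\cap B_G$ for a suitable $\si$-invariant semisimple $2$-element $s$ (mirroring the construction used in Proposition~\ref{prop:abelian}), or from explicit descriptions of $2$-blocks of classical groups in small rank. Carrying out this case-by-case analysis over the short list of exceptional pairs is the place where the most concrete computational work will be required.
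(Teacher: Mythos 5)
Your overall framework is the right one: split by whether the defect is abelian, handle the abelian case via Proposition~\ref{prop:abelian}, and in the non-abelian case use Lemma~\ref{lem:unipdef} together with the field-automorphism-invariance of unipotent characters. But the exceptional cases of Lemma~\ref{lem:unipdef} are a genuine gap, not merely the locus of ``concrete computational work.'' You do not actually produce a height-one $\si$-stable character there, and the construction you sketch (pulling a height-one character out of a non-unipotent Lusztig series in $B_G$) is not what the paper does and is not straightforward, since $\si$-stability and height control inside the dihedral-defect $2$-block need separate care.

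The paper's resolution is much cleaner and is made \emph{before} the case split: for classical $\bG$, if $\ell=2$, or if $\bG$ is of type $\tA_{n-1}(\eps q)$ with $\ell\mid(q-\eps)$, then the \emph{only} unipotent $\ell$-block of $G$ is the principal block (by Enguehard \cite[Thm~A, Prop.~6]{En00}), so these cases are already covered by \cite[Prop.~4.5]{BM15} (when $A=S$) and \cite[Thm~3.5]{MMR} (when $A>S$). This single observation wipes out \emph{both} classical-type exceptions in Lemma~\ref{lem:unipdef} at once --- and note you only listed the $\ell=2$, $W\cong\fS_2$ exception, omitting the $\ell=3$, $W\cong\fS_{3c}$, $\SL_{3c}(\eps q)$ exception, which is also classical.

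A further consequence of skipping this preliminary reduction is that your parenthetical claim that the kernel of $G\to S$ ``has $\ell'$-order since $S$ is non-exceptional'' is false as stated: for $S=\PSL_n(q)$ with $\ell\mid\gcd(n,q-1)$ the kernel does have order divisible by $\ell$. The paper is entitled to assume $\ell\nmid|\bZ(G)|$ precisely because it has already disposed of $\ell=2$ and the type-$\tA$, $\ell\mid(q-\eps)$ configurations; without that first step you cannot identify $\Irr(B_S)$ with $\Irr(B_G)$ by inflation.
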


\begin{proof}
By our assumption, $S$ is a central quotient of a group $G=\bG^F$ as considered
above. Let $B$ denote the unipotent block of $G$ dominating the unipotent block
$B_S$ of $S$ covered by~$\tilde B$. First assume that $\ell=2$ or that $G$ is
of type $\tA_n(\eps q)$ with $\ell|(q-\eps)$. Then the only unipotent block of
$G$ is the principal block by \cite[Thm~A and Prop.~6]{En00}. In this case,
the claim was shown in \cite[Prop.~4.5]{BM15} when $A=S$ and in
\cite[Thm~3.5]{MMR} when $A>S$.
Note that this concludes the discussion of $G=\SL_3(\eps q)$ and $\SL_6(\eps q)$
with $\ell=3|(q-\eps)$ in~(1) of Lemma~\ref{lem:unipdef}, as well as the case
$\ell=2$ in (2) of that result.

So we have $\ell>2$ and $\ell$ does not divide $|\bZ(G)|$. Thus, $\Irr(B_S)$
can be identified with $\Irr(B)$ by inflation and the defect groups of $B_S$ and
of $B$ are isomorphic. If these are abelian, our claim is in
Proposition~\ref{prop:abelian}.

Now consider the case that $B$ has non-abelian defect. Assume $B$ is labelled
by the unipotent $e$-cuspidal pair $(\bL,\la)$, where $e=e_\ell(q)$. Then by
\cite[Lemma~2.9]{KM17} the relative Weyl group $W_G(\bL,\la)$ has order
divisible by~$\ell$. By the above, we may assume we are not in one of the
exceptions of
Proposition~\ref{lem:unipdef}. Then the relative Weyl group $W_G(\bL,\la)$ has
an irreducible character $\phi$ with $\phi(1)_\ell=\ell$. By `preservation of
heights' (see \cite[Prop.~6.5]{KMS}) this implies that the corresponding
unipotent character $\chi\in\Irr(B)$ has height~1. Now unipotent characters
are invariant under all field automorphisms \cite[Thm~4.5.11]{GM20}, so the
deflation of $\chi$ to $S$ extends to a character of height~1 in $\tilde B$,
whence $\mh(\tilde B)=1$.
\end{proof}

\begin{prop}   \label{prop:unip exc}
 Let $\ell$ be a prime. Let $S$ be a non-exceptional covering group of a simple
 group of exceptional Lie type in characteristic not $\ell$ and
 $A=S\langle\si\rangle$, where $\si$ is an $\ell$-power order field
 automorphism of~$S$. Then $\mh(\tilde B)=1$ for any $\ell$-block $\tilde B$
 of $A$ of non-abelian defect covering a unipotent block of $S$.
\end{prop}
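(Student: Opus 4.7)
My plan is to mimic the proof of Proposition~\ref{prop:unip class} as closely as possible, isolating and treating by hand the small number of exceptional cases from Lemma~\ref{lem:unipdef} that are relevant in exceptional type. Throughout, let $B$ denote the unipotent $\ell$-block of $G=\bG^F$ dominating the unipotent block $B_S$ of $S$ covered by $\tilde B$, and let $(\bL,\la)$ be the labelling $e$-cuspidal unipotent pair, where $e=e_\ell(q)$. Since all characters in a unipotent block share a central character trivial on $\bZ(G)$, inflation identifies $\Irr(B_S)$ with $\Irr(B)$. If $B$ is the principal block, then the combined results of \cite[Prop.~4.5]{BM15} and \cite[Thm~3.5]{MMR} immediately yield $\mh(\tilde B)=1$; this absorbs in particular every case in which Enguehard's classification forces the principal block to be the only unipotent $\ell$-block of~$G$.

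Otherwise, if $B_S$ has non-central abelian defect, Proposition~\ref{prop:abelian} applied in $A=S\langle\si\rangle$ supplies a character of $\tilde B$ of height one. If instead the defect of $B_S$ is non-abelian, then so is that of $B$, and \cite[Lemma~2.9]{KM17} gives $\ell\mid|W_G(\bL,\la)|$. Lemma~\ref{lem:unipdef}, restricted to exceptional types, then furnishes an irreducible $\phi\in\Irr(W_G(\bL,\la))$ with $\phi(1)_\ell=\ell$ outside of a short list of exceptions. Preservation of heights \cite[Prop.~6.5]{KMS} converts $\phi$ into a unipotent character $\chi\in\Irr(B)$ of height one, which descends via inflation to a character of $B_S$ of the same height. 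Since unipotent characters are invariant under field automorphisms by \cite[Thm~4.5.11]{GM20}, Clifford theory produces an extension of $\chi$ of height one lying in $\tilde B$, exactly as in the proof of Proposition~\ref{prop:unip class}.

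The main obstacle will be the exceptional cases of Lemma~\ref{lem:unipdef} that pertain to exceptional types: namely $\ell=3$ with $\bG$ of type $\tE_6$ at line~8 of Enguehard's table, and $\ell=2$ with $\bG$ of type $\tE_7$ at lines~3 or~7. In each case $W_G(\bL,\la)\in\{\fS_2,\fS_3\}$ has cyclic Sylow $\ell$-subgroup, so by Cabanes--Enguehard's description the defect group of $B$ takes the form $(\bZ(\bL)^F)_\ell$ extended by a cyclic group of order~$\ell$; when this extension happens to be abelian, we again fall back on Proposition~\ref{prop:abelian}. For the residual, genuinely non-abelian instances I plan to argue directly, using the block-theoretic data tabulated in \cite{En00} and generic degree information (e.g.\ via CHEVIE) to exhibit either a unipotent character of height one through a finer heights analysis, or a non-unipotent character in $B$ whose labelling Lusztig series is not $\si$-stable, following the strategy of Proposition~\ref{prop:4.8}.
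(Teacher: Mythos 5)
Your overall scaffolding is right (principal block via \cite{BM15,MMR}; abelian defect via Proposition~\ref{prop:abelian}; non-abelian defect via Lemma~\ref{lem:unipdef} and height preservation; peel off the exceptional entries of that lemma), but there are several concrete gaps, and the hard part — the exceptional cases — is left essentially unresolved, with a sketch that would not work as written.

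First, the reduction "inflation identifies $\Irr(B_S)$ with $\Irr(B)$" is false precisely in the exceptional cases. When $\ell$ divides $|\bZ(G)|$ (so $\bG$ of type $\tE_6$ with $\ell=3$, or $\tE_7$ with $\ell=2$), the unipotent block $B$ of $G$ contains non-unipotent characters in series $\cE(G,t)$ with $t$ an $\ell$-element, and these need not have $\bZ(G)$ in their kernel; also the defect groups of $B_S$ and of $B$ are genuinely different when $S=G/\bZ(G)$ and $\bZ(G)_\ell\ne1$. The paper's proof hinges on this exact point: it splits into $9\mid(q-1)$ vs.\ $3\|(q-1)$ (resp.\ $8\mid(q-1)$ vs.\ $4\|(q-1)$) to control whether the chosen character descends to $S$ and whether a non-trivial $\si$ can exist at all. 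None of this appears in your argument. Second, you omit the Suzuki and Ree groups, for which one needs to know (from \cite{KM24}) that all non-principal unipotent blocks have abelian defect; otherwise you cannot assume $F$ is a genuine Frobenius map in the generic argument. Third — and this is the central problem — your fallback plan for the exceptional lines is unlikely to close. By construction there is \emph{no} unipotent character of height one in those blocks (that is what the exceptions of Lemma~\ref{lem:unipdef} say: $W_G(\bL,\la)\cong\fS_2$ or $\fS_3$ has no character of degree divisible by $\ell$ to the first power), so "a finer heights analysis" cannot produce one. And the alternative of imitating Proposition~\ref{prop:4.8}, i.e.\ finding a $\si$-unstable Lusztig series, only yields $\mh(B_A)=1$ if the chosen character has height \emph{zero} in $B$: a character of height $h$ in a $\si$-orbit of length $\ell$ gives characters of height $h+1$ in $B_A$. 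When the defect of $B$ is non-abelian you cannot invoke the Brauer height zero theorem for this, and you would need to argue height zero by hand — which you don't. The paper instead constructs, for a small semisimple $\ell$-element $s$ (order~3 with centraliser $D_5$ in $\tE_6$, order~4 with centraliser $E_6$ in $\tE_7$), a character $\chi\in\cE(G,s)\cap\Irr(B)$ that already has height exactly one and is $\si$-\emph{invariant}, so it \emph{extends} to a height-one character of $A$; this is a qualitatively different mechanism from the orbit argument you propose. Finally, your remark that the exceptional blocks might have abelian defect (in which case you would "fall back on Proposition~\ref{prop:abelian}") is moot: the tables in \cite{En00} show these particular blocks never have abelian defect, and the abelian-defect case needs a separate argument anyway because of the centre issue above.
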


\begin{proof}
Let $B_A$ be an $\ell$-block of $A$ with non-abelian defect. Under our
assumptions, $B_A$ is the unique block of~$A$ covering some unipotent block
$B_S$ of $S$. Let $B$ be the unipotent block of~$G$ dominating $B_S$, where
again $S$ is a central quotient of $G=\bG^F$ is as before.

If $B$ is principal, the claim is contained in \cite[Thm~4.4]{BM15} when $A=S$
and in \cite[Thm~3.5]{MMR} when $A>S$. If $B$ has abelian defect, we argue as
follows. If $\ell>2$ the assertion was shown for $S=G$ in
Proposition~\ref{prop:abelian}. The only case for which $S\ne G$ can occur is
for $\bG$ of type $\tE_6$ with $\ell=3$ (because if $\ell{\not|}\ |\bZ(G)|$,
$\Irr(B_S)$ and $\Irr(B)$ are identified as before), but then $G$ has no
3-blocks of abelian defect by the tables in \cite{En00}. The explicit results
in \cite{En00} also show that there is no unipotent $2$-block of $G$ with
abelian defect.

If $G$ is a Suzuki or Ree group, by \cite{KM24} all non-principal unipotent
blocks have abelian defect, so from now on we can assume $F$ is a Frobenius map
and that $B$ has non-abelian defect. In this case, we may argue as in the proof
of Proposition~\ref{prop:unip class} to conclude, unless we are in the
exceptions in~(1) or~(2) of Lemma~\ref{lem:unipdef}, that is, $B$ is the
non-principal unipotent 3-block of $G=\tE_6(\eps q)_\SC$ with $3|(q-\eps)$ or
one of the non-principal 2-blocks of $G=E_7(q)_\SC$ corresponding to one of
Lines~$3$ or~$7$ of the $E_7$-table of \cite[p.~354]{En00}.

First, assume $G=\tE_6(q)_\SC$ with $3|(q-1)$ and $B$ is the non-principal
unipotent 3-block labelled by the $1$-cuspidal pair $(\bL,\la)$, where $\bL$ is
the standard Levi
subgroup of type $D_4$. By \cite[p.~353]{En00} a defect group $D$ of
$B$ is an extension of the homocyclic group $\bZ(\bL^F)_3$ of order
$(q-1)_3^2$ by a cyclic group of order~3 inside the normaliser $\bN_G(\bL)$.
Clearly, $D$ may be chosen to be normalised by the field automorphism
$\si$ of $3$-power order. Let $s\in G^*$ be an element of order~3 with
centraliser of type~$D_5$, and $\chi\in\cE(G,s)$ in Jordan correspondence with
a unipotent character of $C_{G^*}(s)$ in the 1-Harish-Chandra series of $\la$.
Then $\chi\in \Irr(B)$ by \cite[Thm~B]{En00}, and $\chi$ is of height~1 (see
\cite[Lemma~2.14(a)]{KM17}). We now distinguish two cases. If $9|(q-1)$ then
$s$ is a third power, so contained in $[G^*,G^*]$, hence $\chi$ deflates to a
character of $G/\bZ(G)$. As there is a unique $G^*$-class of elements $s$ as
above, $\chi$ is invariant under all field automorphisms of $G$, hence of $S$,
and so extends to a character of height~1 in $B_A$. If $3||(q-1)$ then $q$
cannot be a third power and thus $A=S$. In this case, $\chi$ is a character of
height~1 in $B$ while the corresponding block of $G/\bZ(G)$ has abelian defect.
Since these are the only possibilities for $A=S$ in this situation, we are
again done. The argument for $G=\tE_6(-q)_\SC$ is entirely similar.

Finally, assume $G=\tE_7(q)_\SC$ with $4|(q-1)$ and $\hat B$ is the
non-principal unipotent 2-block labelled by $(\bL,\la)$ where $\bL$ is the
standard Levi subgroup of type $E_6$. Here by \cite[p.~355]{En00} a defect
group $D$ of $B$ is dihedral of order $2(q-1)_2$. Let $s\in G^*$ be an element
of order~4 with centraliser of type $E_6$, and $\chi\in\cE(G,s)$ in
Jordan correspondence with a unipotent character in the 1-Harish-Chandra series
of $\la$. Then $\chi\in B$ by \cite[Thm~B]{En00}, and $\chi$ is of height~1,
see \cite[Lemma~2.16(a)]{KM17}. We again distinguish two cases. If $8|(q-1)$
then $s$ is a square, so contained in $[G^*,G^*]$, hence $\chi$ deflates to a
$\si$-stable character of $G/\bZ(G)$, giving rise to a character of height~1 in
$B_A$. If $4||(q-1)$ then $q$ cannot be a square, and thus $A=S$. In this
case, $\chi$ is a character of height~1 in $B$ while the corresponding block
of $G/\bZ(G)$ has abelian defect. This covers all possible groups $A$ in this
case and we are done. Again, the case when $4|(q+1)$ is entirely similar.
\end{proof}

\subsection{Type $\tA$}
Here we address the groups of types $\tA$ and $\tw{2}\tA$.

\begin{prop}   \label{prop:typeA}
 Let $\ell\geq 3$ and $A$ such that $S=\bF^\ast(A)$ is a central quotient of
 $\SL_n(\eps q)$ and $A$ is generated over $S$ by outer automorphisms of
 $\ell$-power order. Let $B$ be an $\ell$-block of $A$ with non-abelian defect
 groups. Then $\mh(B)=1$.
\end{prop}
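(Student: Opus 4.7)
First, I would apply the standard reductions. By Lemma~\ref{lem:Op'} we may assume $A/S$ is an $\ell$-group, and by Lemma~\ref{lem:not-invariant} we may assume $B$ covers an $A$-stable $\ell$-block $B_S$ of $S$. Proposition~\ref{prop:exc cov} handles exceptional Schur covers, so we may take $S = G := \SL_n(\eps q)$ (any remaining central $\ell'$-kernel being absorbed via Lemma~\ref{lem:centralprod}). Since $\ell \ge 3$ rules out graph and graph-field automorphisms of $\ell$-power order, $A/S$ is generated by images of diagonal and field automorphisms; let $\si$ be an $\ell$-power-order field automorphism of $S$ whose coset generates the field part of $A/S$, and set $\tilde G := \GL_n(\eps q)$. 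Note that to conclude $\mh(B) = 1$ it suffices to produce either an $A$-stable height-$1$ character of $B_S$, or a height-$0$ character of $B_S$ whose $A$-orbit has length exactly $\ell$.

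I would then invoke Jordan decomposition of blocks. Lifting $B_S$ to the block $B_{\tilde G}$ of $\tilde G$ covering it, $B_{\tilde G}$ lies in a Lusztig series $\cE_\ell(\tilde G, s)$ for some semisimple $\ell'$-element $s \in \tilde G^*$. Since $\bG$ is of type $\tA_{n-1}$, the centraliser $\bL^* := C_{\bG^*}(s)$ is an $F$-stable Levi subgroup of $\bG^*$, and the Bonnafé--Rouquier Morita equivalence, in its equivariant refinement available in type $\tA$ through work of Cabanes--Späth and Ruhstorfer, identifies $B_{\tilde G}$ with a unipotent $\ell$-block $b_L$ of $L := \bL^F$, preserving defect groups and intertwining the action of $\si$. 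If $b_L$ has non-abelian defect, then $L$ is a direct product of general linear and unitary groups (a classical group), and Proposition~\ref{prop:unip class} applied to $L\langle\si\rangle$ yields a height-$1$ character in the block of $L\langle\si\rangle$ covering $b_L$; transporting through the Morita equivalence, descending from $\tilde G$ to $S$ via Clifford theory, and extending through the remaining diagonal portion of $A/S$ by Lemma~\ref{lem:pfactor} produces a height-$1$ character in $B$. Otherwise $b_L$, and hence $B_S$, has abelian defect: if this defect is cyclic, the claim follows from Proposition~\ref{prop:meta} together with Lemma~\ref{lem:meta} applied at an appropriate field-automorphism extension; if it is non-cyclic abelian, then the non-abelianness of the defect of $B$ forces $A/S$ to act non-trivially on the defect of $B_S$, and since diagonal automorphisms are inner in $\tilde G$, this action must involve $\si$. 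Proposition~\ref{prop:abelian}, in its refined form for $G = \SL_n(\eps q)$ with non-cyclic defect, then produces a height-$1$ character in the block of $S\langle\si\rangle$ covering $B_S$ that is trivial on $\bZ(G)_\ell$, and this character extends through the diagonal portion of $A/S$ without loss of height to give the required character in $B$.

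The chief technical obstacle is securing the full $A$-equivariance of the Bonnafé--Rouquier Morita equivalence, so that the height-$1$ character in $b_L$ transports faithfully to one in $B_{\tilde G}$ and thence to $B$; this equivariance is available in type $\tA$ through the work of Cabanes--Späth and Ruhstorfer. A secondary subtlety in the abelian-defect case is verifying that diagonal automorphisms (being inner in $\tilde G$) cannot by themselves account for the non-abelianness of the defect group of $B$ when the defect of $B_S$ is abelian, so that $\si$ is indeed the source and Proposition~\ref{prop:abelian} applies; a parallel check is needed to confirm that the $\bZ(G)_\ell$-trivial character it produces is stable under the $\ell$-diagonal action and hence extends cleanly to $A$.
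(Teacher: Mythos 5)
Your overall strategy — lift to $\wt G=\GL_n(\eps q)$, apply Jordan decomposition of blocks to pass to a unipotent block of a ``local'' group, find a height-$1$ unipotent character there, and handle the abelian-defect case through Propositions~\ref{prop:meta} and~\ref{prop:abelian} — matches the skeleton of the paper's argument, which works with the centraliser $C=C_{\wt G^*}(\wt s)$ via Fong--Srinivasan rather than a Levi via Bonnaf\'e--Rouquier (for $\GL_n$ these coincide). However, there are several genuine gaps.

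The most serious omission is the failure mode of Lemma~\ref{lem:unipdef}. When $\ell=3$ and every component of $C$ with non-abelian defect is of the form $\GL_{3c}(\eta q^d)$, $c\in\{1,2\}$, with $3\mid(q^d-\eta)$, the relative Weyl groups are symmetric groups of degree $3$ or $6$ with no character of $3$-part exactly $3$, so there is no unipotent height-$1$ character in the corresponding unipotent block, and the route through Proposition~\ref{prop:unip class} simply produces nothing. The paper devotes parts (iii) and (iv) of the proof entirely to this: one must exhibit explicit non-central semisimple $3$-elements $\wt t$ (e.g.\ $\diag(\ze,\ze^{-1},I_{3c-2},I_{n-3c})$, or $\diag(\ze,\ze,1)$ when $n=3$) whose semisimple characters have height $1$, and verify $\si$-stability, triviality on $\bZ(G)_\ell$, and irreducible restriction to $G$. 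Your proposal does not address this case at all.

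A second gap is the reduction from $S=G/Z$ to $G$. You invoke Lemma~\ref{lem:centralprod} to absorb the central kernel, but that lemma handles central \emph{products} with an abelian $p'$-direct factor, not central \emph{quotients}, and in any case $Z\le\bZ(\SL_n(\eps q))\cong C_{\gcd(n,q-\eps)}$ can very well have a non-trivial $\ell$-part. The paper tracks $Z_\ell$ carefully: defect groups of the relevant blocks of $S$, $X$, $X_\ell$, $XS_\ell$ are computed as intersections and quotients involving $\wt D/Z_\ell$, and triviality on $Z$ (not just on $\bZ(G)_\ell$) must be verified for the constructed characters. Declaring $S=G$ loses exactly the information needed here.

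Third, you treat ``the diagonal part'' and ``the field part'' of $A/S$ as independent, choosing a field automorphism $\si$ generating the latter and then extending separately. But in type $\tA$ the diagonal and field outer automorphisms do not commute and $A/S$ need not split as their product; the generator of $A$ modulo $X:=A\cap\wt S$ is in general a twisted element $\al=\delta\si$ with $\delta\in\wt S$, and one must track $\al$-orbits (not $\si$-orbits), as the paper does. In particular your claim that the non-abelianness of the defect of $B$ ``must involve $\si$'' when the defect of $B_S$ is abelian, and that Proposition~\ref{prop:abelian} then applies to $S\langle\si\rangle$ and extends cleanly, skips the argument that the chosen character is \emph{not} $\al$-invariant (the paper shows this by contradiction via the Lusztig-series action), which is where the real work lies.

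Finally, the version of Bonnaf\'e--Rouquier you invoke requires $C_{\bG^*}(s)$ to lie in a proper Levi; for $\bG=\SL_n$ the dual is $\PGL_n$ and $C_{\PGL_n}(s)$ is generally disconnected and not contained in a proper Levi when $s$ is quasi-isolated, so the reduction as stated does not even launch. The paper sidesteps this entirely by working in $\wt G=\GL_n$ via Fong--Srinivasan, where the centraliser \emph{is} a Levi; if you meant $C_{\wt\bG^*}(\wt s)$, you should say so, but you would then also need to address the descent problems mentioned above, and the fact that $L$ is a direct product of $\GL$'s (not a quasi-simple covering group) means Proposition~\ref{prop:unip class} cannot be applied verbatim — only its internal mechanism can, component by component, after which the exceptional $\ell=3$ components still have to be treated by hand.
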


\begin{proof}
Let $\wt{G}:=\GL_n(\eps q)$, $G:=\SL_n(\eps q)=\bG^F$ and $S=G/Z$ with
$Z\leq \bZ({\wt{G}})$. We have $A\leq\wt{G}/Z\rtimes\langle\si\rangle$, where
$\si$ is an $\ell$-power order field automorphism of $\wt{G}$ inducing an
automorphism on $S$. 

Let $G_\ell$ be the group $G\leq G_\ell\leq \wt{G}$ such that $G_\ell/G$ is the
Sylow $\ell$-subgroup of $\wt{G}/G$ (so $|\wt{G}/G_\ell|$ is $\ell'$ and
$G_\ell/G$ is an $\ell$-group). Write $\wt{S}:=\wt{G}/Z$ and $S_\ell:=G_\ell/Z$.
Now, define $X:=A\cap \wt{S}$ and $X_\ell:=A\cap S_\ell$. We have $A/X$ is
cyclic of $\ell$-power order and $X/X_\ell$ is cyclic of order prime to $\ell$.
Further, there is $\delta\in \wt{S}$ such that the image of
$\al:=\delta\si\in A/S$ in $A/X$ generates $A/X$.
Let $B_X$ be a block of $X$ covered by $B$ and $B_S$ a block of $S$ covered by
$B_X$ (and hence by~$B$). Let $B_{\wt{S}}$ be a block of $\wt{S}$ covering $B_X$
(and hence $B_S$). Further, let $B_G$ be the block of $G$ dominating $B_S$, and
$\wt{B}$ the block of $\wt{G}$ dominating $B_{\wt{S}}$, so that $\wt{B}$ covers
$B_G$.

Let $I(B_X)$ be the stabiliser of $B_X$ in $A$. By the Fong--Reynolds theorem
\cite[Thm~9.14]{N98}, induction of characters $\Ind_{I(B_X)}^A$ from the
block of $I(B_X)$ above $B_X$ to $A$ gives all characters in the block $B$ and
moreover preserves heights. Then we may without loss assume that the block
$B_X$ is $\al$-invariant. Note then that the unique block $B_{XS_\ell}$ of
$XS_\ell$ covering $B_X$ must also be $\al$-invariant. 

Let $\wt{s}\in\wt{G}^\ast$ and $s\in G^\ast$ be semisimple $\ell'$-elements
such that $\Irr(\wt{B})\subseteq\cE_\ell(\wt{G},\wt{s})$ and
$\Irr(B_G)\subseteq \cE_\ell({G}, {s})$. For each group $G\leq H\leq \wt{G}$,
we define $\cE(H,\wt{s})$ to be the set of constituents of restrictions of
characters of $\cE(\wt{G},\wt{s})$ to $H$, as in \cite[Def.~1.1]{CE99}. (Note
that with this notation, $\cE(G,\wt{s})$ is the same as $\cE({G},{s})$.)

Set $C:=C_{\wt{G}^\ast}(\wt{s})$. Note that $C$ is a direct product
$C=\prod C_i$ of groups $C_i$ of the form $\GL_m(\eta q^d)$ for suitable
$\eta\in\{\pm1\}$ and positive integers $m$ and~$d$. By \cite[Thm~(7A)]{FS82},
Jordan decomposition maps $\Irr(\wt B)\cap \cE(\wt{G},\wt{s})$ to
$\Irr(B_C)\cap\cE(C,1)$, where $B_C=\prod B_{C_i}$ is some unipotent block of
$C=\prod C_i$ with defect groups isomorphic to a defect group $\wt{D}$ of
$\wt{B}$. We further see that this map is height-preserving, and by
\cite[Thm~3.1]{CS13} can even be chosen to be $\si$-equivariant.
\medskip

(i) First, suppose that $\wt{D}$ is abelian. If the block $B_{X}$ has cyclic
defect groups, then by Proposition~\ref{prop:meta}, there is a height-zero
character of $B_X$ in an orbit of length $\ell$ under the action of $A$, giving
a character of height one in the (unique) block $B$ of $A$ covering $B_X$. 
So, we assume the defect groups of $B_X$ are not cyclic, and hence $\wt{D}$ is
not cyclic.

By Proposition \ref{prop:abelian} and its proof, there is some $\ell$-element
$t\in C_{G^\ast}(s)$ and $\chi\in\cE(G, st)$ such that $\chi$ and $\cE(G,st)$
lie in orbits of size~$\ell$ under $\si$. Notice that since the class of $st$
is stable under diagonal automorphisms, we further have $\cE(G,st)$ has an
orbit of size $\ell$ under $\al$. If $D_G:=\wt{D}\cap G$ is
non-cyclic, Proposition~\ref{prop:abelian} shows that $\chi$ can be chosen
trivial on $\bZ(G)_\ell$. Further, the construction in \cite[Thm~4.11]{MNST}
yields a preimage $\wt{s}\wt{t}\in\wt{G}^\ast$ of $st$ such that
$\cE(\wt{G},\wt{s}\wt{t})$ has an orbit of size $\ell$ under~$\si$, and
applying \cite[Thm~4.1]{Ma14} we obtain
$\wt\chi\in\Irr(\wt{B}\mid\chi)\cap\cE(\wt{G},\wt{s}\wt{t})$ in an orbit of
size $\ell$ under~$\si$. Hence $\wt\chi$ is also in an orbit of size $\ell$
under $\al$.  When $D_G$ is cyclic but $\wt{D}$ is not, the
constructions in \cite[Thm~4.11]{MNST} still work. 

Then since $\wt{S}/XS_\ell$ has order prime to $\ell$, we conclude that there
is some $\psi\in\Irr(B_{XS_\ell}\mid \wt{\chi})$ that is also invariant under
$\al^\ell$.  Note that the defect groups of $B_{XS_\ell}$ and of the unique
block $B_{S_\ell}$ of $S_\ell$ above $B_S$ have size $|\wt{D}/Z_\ell|$, where
$Z_\ell$ is the Sylow $\ell$-subgroup of $Z$. Those of the unique block
$B_{X_\ell}$ of $X_\ell$ above $B_S$ have size $|(\wt{D}/Z_\ell)\cap X_\ell|$.
Since $X/X_\ell$ is prime to $\ell$, we have the defect groups of $B_X$ have
size $|(\wt{D}/Z_\ell)\cap X_\ell|=|(\wt{D}/Z_\ell)\cap X|$ as well. Since
restriction of characters is a bijection from $\cE(G_\ell, \wt{s})$ to
$\cE(G,s)$ by \cite[Prop.~1.3]{CE99}, we see $B_{S}$ and $B_{X_\ell}$ are 
$S_\ell$-invariant. Hence
$[S_\ell:\wt{D}/Z_\ell]_\ell=[S:D_G/Z_\ell]_\ell
 =[X_\ell:(\wt{D}/Z_\ell)\cap X_\ell]_\ell$ by Lemma \ref{lem:pfactor}. Since
$\wt{D}$ is abelian, note that the characters of each of these blocks are of
height zero, by the ``if" direction of Brauer's height zero conjecture
\cite{KM13}. Then for any $\vhi\in\Irr(B_X\mid \psi)$, we must have
\[\vhi(1)_\ell=[X:(\wt{D}/Z_\ell)\cap X_\ell]_\ell
  =[X_\ell:(\wt{D}/Z_\ell)\cap X_\ell]_\ell=[S_\ell:\wt{D}/Z_\ell]_\ell
  =[XS_\ell:\wt{D}/Z_\ell]_\ell=\psi(1)_\ell\]
Since $XS_\ell/X$ is an $\ell$-group, it follows that $\psi$ must restrict
irreducibly to~$\vhi$, and therefore $\vhi$ is also $\al^\ell$-invariant. 

We now claim that $\vhi$ is not $\al$-invariant. Note that
$\vhi\in\cE(X,\wt{s}\wt{t})$. If $\vhi^\al=\vhi$, then $\vhi$ lies above
the deflations  of characters in  $\cE(G, st)$ and of characters in
$\cE(G,st)^\al\neq \cE(G,st)$. Then the same is true for $\wt\chi$, a
contradiction. So we see $\vhi$ must lie in an orbit of size $\ell$ under the
cyclic group $A/X$, and hence there is a character of $B$ above $\vhi$ with
height one.
\medskip

(ii) From now on, we suppose that $\wt{D}$ is non-abelian. First, suppose
either $\ell\geq 5$, or $\ell=3$ and not all factors $C_i$ of~$C$ with
non-abelian defect groups for the corresponding block $B_{C_i}$ are of the form
$\GL_{3c}(\eta q^d)$ with $c\in\{1,2\}$ and $3\mid (q^d-\eta)$. (That is, we
assume here that at least one of the blocks whose tensor product makes up $B_C$
has non-abelian defect groups and is not of a type singled out in
Lemma~\ref{lem:unipdef}.) Then as in the proof of
Proposition~\ref{prop:unip class}, there
is a unipotent character of height one in $B_C$, and hence a member $\wt\chi$
of $\Irr(\wt{B})\cap\cE(\wt{G}, \wt{s})$ $=\cE(\wt{G}, \wt{s})$ of height one.
Note that $\wt\chi$ is trivial on $\bZ(\wt{G})_\ell$, by \cite[Prop.~1.2]{CE99}
and hence on $Z$ by \cite[Thm~9.9(c)]{N98}.

Let $B_{G_\ell}$ be the block of $G_\ell$ above $B_G$, covered by $\wt{B}$.
Then $\Irr(B_{G_\ell})\cap\cE(G_\ell, \wt{s})$ contains some
$\chi'\in\Irr(G_\ell\mid \wt\chi)$, also with height one using
Lemma~\ref{lem:Op'}.
By \cite[Prop.~1.3]{CE99}, restriction of characters is a bijection from
$\cE(G_\ell, \wt{s})$ to $\cE(G,s)$, so $\chi:=\chi'|_G$ lies in
$\Irr(B_G)\cap\cE(G,s)$ and has height one using Lemma \ref{lem:pfactor}.
Recall further that $\wt\chi$ is trivial on $Z$, and hence so are $\chi$ and
$\chi'$. We continue to denote by $\chi$ and $\chi'$ the deflations to
$S_\ell=G_\ell/Z$ and $S=G/Z$.

Let $B_{X_\ell}$ be the unique block of $X_\ell$ covering $B_S$, so it follows
that $\vhi:=\chi'|_{X_\ell}\in\Irr(B_{X_\ell})$. Let $\wt{\vhi}\in\Irr(B_X)$
lie above $\vhi$. Notice that $\wt\vhi$ also has height one in $B_X$, using
Lemma~\ref{lem:Op'}. Recalling that $B_{XS_\ell}$ is $\al$-invariant, we have
$\al$, and hence $\si$, permutes the blocks of $\wt{S}$ above $B_{XS_\ell}$.
Then $\si$ permutes the corresponding blocks of $\wt{G}$, which lie above
$B_{G_\ell}$ and include~$\wt{B}$. Now, each of these blocks of $\wt{G}$ lies
in $\cE_\ell(\wt{G},\wt{s}')$ for some semisimple $\ell'$-element
$\wt{s}'\in\wt{G}^\ast$ such that $\cE(\wt{G}, \wt{s}')$ contains characters
whose restrictions to $G_\ell$ have constituents in $\cE(G_\ell, \wt{s})$.
These in turn are of the form $\wt{s}'=\wt{s}z$ with $z\in\bZ(\wt{G}^\ast)$
corresponding by duality to a character $\hat z$ of $\wt{G}/G$ with $G_\ell$ in
its kernel, so that $\hat z$ is a character of $\wt{G}/G_\ell$. So, $\si$ acts
on the set of classes of such $\wt{s}z$. Since the number of these is prime
to~$\ell$, it follows that one of the series $\cE(\wt{G}, \wt{s}')$ with
$\wt{s}'=\wt{s}z$ must be $\si$-stable. Replacing $\wt{s}$ by this~$\wt{s}'$,
we may therefore further assume that $\cE(\wt{G}, \wt{s})$ is $\si$-stable.
By \cite[Thm~4.1]{Ma14}, this means each character in $\cE(\wt{G},\wt{s})$ is
$\si$-stable. Hence, we may assume our $\wt{\chi}$ is stable under $\si$ and
therefore $\al$.

Since $\ell$ does not divide the order of the cyclic group $\wt{G}/G_\ell$,
there must be some such~$\chi'$ as before that is also $\al$-invariant, so
$\vhi$ is $\al$-invariant. But the cyclic group $X/X_\ell$ also has order prime
to $\ell$, so the number of characters in $\Irr(X|\wt{\chi})$ above $\vhi$ is
prime to $\ell$, and we may assume $\wt\vhi$ is $\al$-invariant. Then any
character of $A$ above $\wt\vhi$ has height one, as desired.
\medskip

(iii) Now suppose that $\ell=3$, $\wt{D}$ is still non-abelian, and the factors
of $C$ with non-abelian defect group are all of the form
$\GL_3(\eta q^d)$ or $\GL_6(\eta q^d)$ with $3\mid (q^d-\eta)$, and $n>3$.
Write $C=C_1\times C_2$, where $C_1$ is one of the factors $\GL_{3c}(\eta q^d)$
with $c\in\{1,2\}$ and $C_2$ is the product of the remaining factors. 

Let $\wt{t}$ be the image in $\wt{G}$ under the natural embedding of
$\diag(\ze,\ze^{-1},I_{3c-2},I_{n-3c})\in C$, where $\ze\in\FF_{q^{2d}}^\times$
with $|\ze|=3$. Then $\wt t$ is $\si$-invariant,
and clearly is not $\wt{G}^\ast$-conjugate to $\wt{t}z$ for any
$1\neq z\in\bZ(\wt{G}^\ast)$. Further, we have $[C:C_{C}(\wt{t})]_3=3$.
Then the corresponding semisimple character in $\cE(C,\wt t)$ in (the principal
block) $B_C$ has height one and is $\si$-invariant. Arguing as in (ii), we may
assume $\cE(\wt{G},\wt{s})$, and hence $\wt{B}$ and the class of $\wt{s}$ are
$\si$-stable, so $\al$-stable. Then the semisimple character
$\chi_{\wt{s}\wt t}$ of $\wt{G}$ lies in $\wt{B}$, has height one, and is
$\al$-invariant, using \cite[Thm~(7A)]{FS82} and \cite[Thm~3.1]{CS13}.
Since $\wt t\in[\wt{G}^\ast,\wt{G}^\ast]$ and the characters in
$\cE(\wt{G},\wt{s})$ are trivial on $Z$ as above, we have by
\cite[Lemma~2.2]{Ma07} and its proof that $\chi_{\wt{s}\wt{t}}\in\Irr(\wt{G})$
is also trivial on $Z$. Let $B_{G_\ell}$ be the block of $G_\ell$ above $B_G$,
covered by $\wt{B}$. Then $\Irr(B_{G_\ell})\cap\cE(G_\ell, \wt{s}\wt{t})$
contains some $\chi'\in\Irr(G_\ell\mid \wt\chi)$, also with height one using
Lemma~\ref{lem:Op'} and $\al$-invariant since $\wt{G}/G_\ell$ is a
$3'$-group.    

Further, by Clifford theory and the fact that $G_\ell/G$ is cyclic, the number
of irreducible constituents of $\chi'$ on restriction to $G$ is the number of
$\beta\in\Irr(G_\ell/G)$ such that $\chi'\beta=\chi'$. 
But any such $\beta$ corresponds to an element $z\in\bZ(\wt{G}^\ast)$ of 3-power
order satisfying $\wt{s}\wt{t}z$ is conjugate to $\wt{s}\wt{t}$, so the only
possibility is $z=1$, whence $\beta=1$. That is, $\chi'$ restricts irreducibly
to a character in $B_G\cap\cE(G, st)$ with height one, where $t$ is the image of
$\wt{t}$ in~$G^\ast$. From here, we
obtain a character in $B$ with height one from the same argument as (ii).

(iv) Finally, suppose that $\wt{G}=\GL_3(\eps q)=C$ with $\ell=3\mid (q-\eps)$.
Note that in this case, field automorphisms of $3$-power order centralize the
outer diagonal automorphisms, so $A/S$ is generated by a field
automorphism~$\si$. Consider the element $\wt{t}=\diag(\zeta,\zeta,1)$ with
$|\zeta|=3$. Since $\si$ is a field automorphism of 3-power order, it is
induced by a Frobenius morphism $F_{q_0}$, where $q=q_0^{3^a}$ for some
positive integer $a$ and
power $q_0$ of the defining characteristic. Then $\wt{t}$ is $\si$-stable since
$3\mid (q_0-\eps)$, by Fermat's little theorem. If $9\mid (q-\eps)$, then there
is $z\in\bZ(\wt{G}^\ast)$ of order~9 such that
$\wt{t}z\in[\wt{G}^\ast, \wt{G}^\ast]$, and hence the image $t$ of $\wt{t}$ in
$G^\ast$ lies in $[G^\ast, G^\ast]$. In particular, the characters of $G$ lying
below the corresponding semisimple character $\chi_{\wt{s}\wt{t}}$ of $\wt{G}$
are trivial on $\bZ(G)$. If instead $3\mid\mid(q-\eps)$, and $B_S$ has
non-abelian defect, then we have $S=G$. Further, $\wt{t}$ is not
$\wt{G}^\ast$-conjugate to~$\wt{t}z$ for any $1\neq z\in\bZ(\wt{G}^\ast)$. Then
$\chi_{\wt{s}\wt{t}}\in\Irr(\wt{G})$ lies in $\wt{B}$, has height one, and is
fixed by~$\si$. In either case, the previous arguments now yield a character
of~$B$ with height one.
Finally, if $3\mid\mid(q-\eps)$ and $B_S$ has abelian defect groups while
$\wt{D}$ is non-abelian, then $S=G/\bZ(G)=\PSL_3(\eps q)$ and $B_S$
is the principal block, and the result is part of \cite[Thm~3.5]{MMR}.

We remark that the character tables of $\SL_3(\eps q)$ and $\GL_3(\eps q)$ are
known explicitly and the claim could thus also be checked by inspection.
\end{proof}

\begin{prop}   \label{prop:typeA2}
 Let $\ell=2$ and let $G=\SL_n(\eps q)$. Let $B$ be a quasi-isolated $2$-block
 of $G$ with non-abelian defect groups. Then $\mh(B)=1$.
\end{prop}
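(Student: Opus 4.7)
The plan is to mirror the proof of Proposition~\ref{prop:typeA}, adapted to $\ell=2$ and exploiting the quasi-isolation hypothesis. I would set $\wt G := \GL_n(\eps q)$, let $\wt B$ be the 2-block of $\wt G$ covering $B$ with $\Irr(\wt B) \subseteq \cE_2(\wt G, \wt s)$, and analyze the structure of the semisimple 2-element $\wt s \in \wt G^*$. If $\wt s$ is central, then $\wt B$ is a twist of a unipotent block and the claim reduces to \cite[Thm~3.5]{MMR}, as at the start of the proof of Proposition~\ref{prop:unip class}. Otherwise, the quasi-isolation of $B$ in $G$ forces the eigenvalues of $\wt s$, up to a central multiple, to consist of the $k$-th roots of unity each with multiplicity $m = n/k$, for some $k = 2^j \geq 2$. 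Accordingly $C := C_{\wt G^*}(\wt s) \cong \GL_m(\eps q)^k$ up to an appropriate twist.

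By Fong--Srinivasan's Jordan decomposition \cite[Thm~(7A)]{FS82}, $\Irr(\wt B) \cap \cE(\wt G, \wt s)$ is in height-preserving bijection with the characters in a unipotent 2-block $B_C$ of $C$. Since $\ell = 2$, the only unipotent 2-block of each $\GL_m(\eps q)$ factor is the principal block \cite[Thm~A, Prop.~6]{En00}, so $B_C$ is the tensor product of such principal blocks. The non-abelian defect hypothesis on $B$ forces $m \geq 2$ (a Sylow 2-subgroup of $\GL_m(\eps q)$ being non-abelian if and only if $m \geq 2$). For $m \geq 3$, Lemma~\ref{lem:unipdef} together with the argument of Proposition~\ref{prop:unip class} produces a unipotent character of one $\GL_m(\eps q)$ factor of 2-height 1; tensoring with trivial characters on the remaining factors yields a character of $B_C$ of height 1, which lifts via Jordan decomposition to $\wt\chi \in \Irr(\wt B)$ of height 1.

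The case $m = 2$ is the principal obstacle, since the two unipotent characters of $\GL_2(\eps q)$ (trivial and Steinberg) both have 2-height 0, corresponding to the $\fS_2$ exception in Lemma~\ref{lem:unipdef}(2). Here I would instead twist by a suitable semisimple $2'$-element $\wt t \in C$---for instance $\diag(\zeta,\zeta^{-1})$ in one of the $\GL_2$ factors for an element $\zeta$ of odd prime order in $\FF_{q^{2}}^\times$---to produce a semisimple character of $\wt B$ of height 1 through a second application of Jordan decomposition to $\wt s \wt t$; the finitely many small $q$ where no such $\wt t$ is available can then be handled by direct inspection of the relevant character tables. Finally, I would descend from $\wt\chi \in \wt B$ to $B$ by Clifford theory: writing $\wt\chi|_G = \chi_1 + \cdots + \chi_t$ with equal-degree constituents $\chi_i \in \Irr(G)$, one has $t = (q-\eps)/|A(\wt s)|$ where $A(\wt s) \leq \bZ(\wt G^*)$ is the stabilizer of the $\wt G^*$-class of $\wt s$ and contains the cyclic group of order $k$ generated by the eigenvalue scalar. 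A 2-adic valuation computation then reduces the descent to matching $v_2(|A(\wt s)|)$ with $v_2(|D_{\wt G}/D_G|)$, which follows from the explicit defect structure in these quasi-isolated blocks, yielding some $\chi_i$ of height exactly 1 in $B$. The main technical hurdle is controlling this descent in the boundary case $m = 2$, where the non-unipotent source character interacts nontrivially with $A(\wt s)$ and one must verify that the lifted semisimple character has the correct stabilizer behaviour under $\widehat{\wt G/G}$.
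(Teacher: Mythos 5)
Your overall strategy (pass to $\wt G=\GL_n(\eps q)$, analyse $C_{\wt G^*}(\wt s)\cong\GL_m(\ast)^{k}$, use Jordan decomposition and Lemma~\ref{lem:unipdef} for $m\ge3$, treat $m=2$ separately) is in the right spirit and parallels the paper's argument, but two of your key claims are wrong.

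First, you assert that quasi-isolation forces the eigenvalue multiplicity pattern to come from $k$-th roots of unity with $k=2^j\ge2$. Since $\ell=2$, the labelling element $s$ of a $2$-block is a semisimple $2'$-element, so $s\in\PGL_n(\eps q)$ has \emph{odd} order $d\mid n$, not $2$-power order. Consequently $C_{\wt G^*}(\wt s)\cong\GL_{n/d}(q^a)^{d/a}$ with $d$ (and $a\mid d$) odd. This is not a cosmetic slip: the parity of the number of factors and the oddness of $d$ are what let one transfer a height-one character of a single factor (or a height-one character trivial on the centre) directly into $B$ with the correct $2$-height.

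Second, and more seriously, your plan for the $m=2$ case does not land in the block $\wt B$ at all. You propose twisting by a semisimple $2'$-element $\wt t$ and applying Jordan decomposition to $\wt s\wt t$. But $\Irr(\wt B)=\cE_2(\wt G,\wt s)=\bigcup_u \cE(\wt G,\wt s u)$ where $u$ ranges over $2$-elements of $C_{\wt G^*}(\wt s)$; a Lusztig series $\cE(\wt G,\wt s\wt t)$ with $\wt t$ a nontrivial $2'$-element generically lies in a \emph{different} $2$-block. You need a $2$-element twist, not a $2'$-element twist. The paper's resolution is direct: since (for $\ell=2$) the unique unipotent $2$-block of $\GL_2(q^a)$ is the principal block, the principal block of $C_{\tiG^*}(\tilde s)=\GL_2(q^a)^{d/a}$ already contains characters of height one with $\bZ(\tiG^*)$ in their kernel — concretely, characters labelled by $2$-elements of order $\ge4$ in a split torus, of degree a multiple of $q^a+1$ (or its Ennola dual). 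Because the Fong--Srinivasan bijection \cite[Thm~(7A)]{FS82} is between full blocks (not just the $\cE(\wt G,\wt s)$ pieces), such a character pulls back to a height-one member of $\wt B$, and the triviality on $\bZ(\tiG^*)$ lets one descend to $C_{G^*}(s)$ and thence to $B$. This also removes the need for any ad hoc treatment of ``small $q$'' — no case-by-case inspection is required. Finally, the paper streamlines the descent to $G$ by first noting that $\Irr(B)=\cE_2(G,s)$ (via \cite[Thm~21.14]{CE04} and \cite{BDR}) and that the defect group is a full Sylow $2$-subgroup of $C_{\bG^*}(s)$, so one works with Jordan decomposition for $G$ directly rather than via a Clifford-theoretic $2$-adic valuation comparison of $A(\wt s)$ with $|D_{\wt G}/D_G|$; the latter computation, as you set it up, is underdetermined and is where you flag your own uncertainty.
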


\begin{proof}
Let $s\in G^*=\PGL_n(\eps q)$ be a quasi-isolated element of odd order $d|n$
such that $\Irr(B)\subseteq\cE_2(G,s)$. Note that by \cite[Thm~21.14]{CE04}
in conjunction with \cite{BDR} we in fact have $\Irr(B)=\cE_2(G,s)$, and a
defect group of $B$ is a Sylow 2-subgroup of $C_{\bG^*}(s)$ (as it has the
same order by \cite[Lemma~2.6]{KM13}). Let $\tilde s\in\tiG^*=\GL_n(\eps q)$ be
a preimage of~$s$. Then $C_{\tiG^*}(\tilde s)=\GL_{n/d}(q^a)^{d/a}$ for some
$a|d$ (see \cite{Bo05}). If $n/d\ge3$ then $\GL_{n/d}(q^a)$ has a unipotent
character of height~1 by Lemma~\ref{lem:unipdef}, and via Jordan decomposition
this gives rise to a character in $\cE(G,s)$ and hence in $B$ of height~1, as
$d$ is odd. So assume $n/d\le2$. If $n/d=1$ then $B$ has abelian defect. So
we are left with the case $C_{\tiG^*}(\tilde s)=\GL_2(q^a)^{d/a}$ and $n=2d$.
Here the principal block of $C_{\tiG^*}(\tilde s)$ contains characters of
height~1 with $\bZ(\tiG^*)$ in their kernel,  which give rise to height~1
characters of $C_{G^*}(s)$ and via Jordan decomposition of $B$.
\end{proof}

\subsection{Quasi-isolated blocks}
We now turn to \emph{quasi-isolated blocks} of groups of Lie type, that is,
blocks labelled by quasi-isolated elements in the dual group.

\begin{prop}   \label{prop:isolatedclassicalnofield}
 Let $\ell$ be a prime and let $G=\bG^F$, where $\bG$ is a simple simply
 connected group of classical type, and $F:\bG\rightarrow\bG$ is a Frobenius
 endomorphism defining $G$ over $\FF_q$ with $q$ a power of $p\ne\ell$. Then
 $\mh(B)=1$ for all quasi-isolated $\ell$-blocks $B$ of $G$ with non-abelian
 defect.
\end{prop}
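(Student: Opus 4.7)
The plan is to reduce, via Jordan decomposition, to unipotent blocks of smaller classical factors. Since type~$\tA$ is already covered by Propositions~\ref{prop:typeA} (applied with $A=S=G$) and~\ref{prop:typeA2}, I will assume $\bG$ is of type $\tB_n$, $\tC_n$, or $\tD_n$ (including $\tw2\tD_n$). Let $s\in\bG^{*F}$ be a quasi-isolated semisimple $\ell'$-element with $\Irr(B)\subseteq\cE_\ell(G,s)$.

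First I would handle $\ell=2$. By Bonnaf\'e's classification \cite{Bo05} of quasi-isolated classes in classical groups, every non-trivial quasi-isolated element of the adjoint dual $\bG^*$ (for $\bG$ of type $\tB$, $\tC$, or $\tD$) has even order: the $\GL$-factors appearing in the centraliser of any non-central odd-order semisimple element force $C_{\bG^*}(s)$ into a proper Levi subgroup, contradicting quasi-isolation. Hence when $\ell=2$ we must have $s=1$, so $B$ is a unipotent block, and Proposition~\ref{prop:unip class} immediately gives $\mh(B)=1$.

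Next suppose $\ell$ is odd, hence good for $\bG$. Again by \cite{Bo05}, any non-trivial quasi-isolated $\ell'$-element $s\in\bG^*$ is an involution and $C:=C_{\bG^*}(s)$ is, up to isogeny, a direct product $C_1\times C_2$ of two reductive groups of classical types ($\tB$, $\tC$, or~$\tD$). I would invoke the theorem of Bonnaf\'e--Dat--Rouquier~\cite{BDR} in combination with the $\ell$-block theory of \cite[Thm~21.14]{CE04}: this provides a Morita equivalence between the union of $\ell$-blocks of $G$ in $\cE_\ell(G,s)$ and the unipotent $\ell$-blocks of $C^F$, preserving both defect groups and character heights since $\ell$ is odd and good. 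Thus $B$ corresponds to a unipotent block $B_C=B_{C_1}\otimes B_{C_2}$ of $C_1^F\times C_2^F$ with the same (non-abelian) defect, and at least one factor, say $B_{C_1}$, must itself have non-abelian defect. Applying Proposition~\ref{prop:unip class} to $B_{C_1}$ (a unipotent block of a classical group with $\ell$ odd) produces a unipotent character of height~$1$ in $B_{C_1}$; tensoring with a height-zero unipotent character of $B_{C_2}$ and transporting back through the Jordan correspondence then yields the required height-$1$ character of~$B$.

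The principal obstacle I anticipate is verifying that the Morita equivalence above is indeed height-preserving, with correctly identified defect groups, when $s$ is merely quasi-isolated and $C$ may be disconnected. For $\ell$ odd the component group $C_{\bG^*}(s)/C^0_{\bG^*}(s)$ is a $2$-group and hence an $\ell'$-group, so the transfer is well controlled by~\cite{BDR}. A secondary worry would be the exceptional cases in Lemma~\ref{lem:unipdef}: case~(1) concerns only types $\tA$ and $\tE_6$ and is not relevant here, while case~(2) applies only for $\ell=2$, which has already been reduced to the unipotent setting. Hence Proposition~\ref{prop:unip class} applies to $B_{C_1}$ without exception, and the argument closes.
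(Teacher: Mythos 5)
Your overall strategy --- pass via Jordan decomposition to unipotent blocks of the centraliser of $s$ and then invoke the unipotent case --- is the same as the paper's, but the execution has two genuine gaps.

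First, the Morita equivalence you invoke does not exist. Bonnaf\'e--Dat--Rouquier \cite{BDR} gives a Morita equivalence between the $\ell$-blocks of $G$ in $\cE_\ell(G,s)$ and the corresponding blocks of a \emph{Levi subgroup} $L$ with $C_{\bG^*}(s)\subseteq L^*$; for quasi-isolated $s$ one is forced to take $L^*=\bG^*$, so the equivalence is trivial and says nothing about $C_{\bG^*}(s)^F$, which is not a Levi subgroup. There is no general theorem producing a height- and defect-preserving Morita equivalence with unipotent blocks of the centraliser. What the paper actually does is first pass through a regular embedding $\iota\colon\bG\hookrightarrow\wt\bG$ (using that for the types at hand, $\wt G/G\bZ(\wt G)$ and $G\cap\bZ(\wt G)$ have order prime to $\ell$, together with Lemmas~\ref{lem:Op'} and~\ref{lem:centralprod}), and then use the character-level Jordan decomposition argument from the first half of the proof of \cite[Prop.~3.17]{MNST} to obtain a unipotent block $b$ of $H:=C_{\wt\bG^*}(\wt s)^F$ with isomorphic defect groups and a height-preserving injection of its unipotent characters into $\Irr(\wt B)\cap\cE(\wt G,\wt s)$. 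This step, which you skip, is where the real work of the proposition lies.

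Second, your dismissal of Lemma~\ref{lem:unipdef}(1) is mistaken. That exception concerns the relative Weyl groups of the \emph{components of the centraliser}, not $\bG$ itself, and for a quasi-isolated $2$-element $s$ in the adjoint dual of a classical group of type $\tB$, $\tC$ or $\tD$ the connected centraliser can very well have a factor of type $\tA$ (for instance, the quasi-isolated involution in $\PSp_{2n}$ with connected centraliser $\GL_n/\!\pm1$, or $\GL_n$-type factors arising in type~$\tD$). Thus one may have $H_0:=[\bH,\bH]_\SC^F$ with a component $\SL_{3c}(\eta q^d)$, $c\in\{1,2\}$ and $3\mid(q^d-\eta)$, precisely the exception in Lemma~\ref{lem:unipdef}(1), and your statement that $C$ is ``a direct product of two reductive groups of classical types $\tB$, $\tC$, or $\tD$'' is not correct. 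The paper's last paragraph is devoted exactly to this residual situation: using \cite[Tab.~II]{Bo05} it checks that only one component then has non-abelian defect, identifies it as the principal $3$-block of an $\SL_{3c}$, and falls back on the argument of part~(iv) of the proof of Proposition~\ref{prop:typeA}. Your argument closes too quickly and misses these cases entirely.
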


\begin{proof}
Let $B$ be an $\ell$-block of $G$ with a non-abelian defect group $D$. We have
$\Irr(B)\subseteq \cE_\ell(G,s)$ for a semisimple $\ell'$-element $s\in G^*$.
By Proposition~\ref{prop:unip class} we may assume that $B$ is not a unipotent
block, so $s\neq 1$. We may further assume that $\bG$ is not of type
$\tA_{n-1}$, by Propositions~\ref{prop:typeA} and~\ref{prop:typeA2}. Then since
the remaining groups of classical types have no non-trivial quasi-isolated
$2'$-elements, we have that $\ell$ is odd.

Let $\iota\colon\bG\rightarrow\wt\bG$ be a regular embedding, and write
$\wt{G}:=\wt\bG^F$. Note that by our assumptions $\wt{G}/G\bZ(\wt{G})$ and
$G\cap\bZ(\wt{G})$ are $\ell'$-groups.  Let $\wt{s}\in\wt{G}^\ast$ be a
semisimple $\ell'$-element such
that $\iota^\ast(\wt{s})=s$. Then there is a block $\wt{B}$ of $\wt{G}$
covering $B$ with $\Irr(\wt B)\subseteq \cE_\ell(\wt G, \wt s)$, and by
Lemmas~\ref{lem:Op'} and~\ref{lem:centralprod}, it suffices to prove the
statement for $\wt{B}$.

Let $\bH^\ast:=C_{\wt\bG^\ast}(\wt{s})$ and let $\bH$ be dual to
$\bH^\ast$. Writing $H:=\bH^F$, since $\ell$ is odd we may argue exactly as in
the first half of the proof of \cite[Prop.~3.17]{MNST} to obtain that there is
a unipotent block $b$ of $H$ such that $b$ and $\wt{B}$ have isomorphic defect
groups and the unipotent characters in $b$ are mapped injectively through
Jordan decomposition to $\Irr(\wt{B})\cap \cE(\wt{G},\wt{s})$. Further, note
that Jordan decomposition preserves heights. 

Let $\bH_0:=[\bH, \bH]_\SC$ and $H_0:=\bH_0^F$. By \cite[Thm~(i)]{CE94}, the
block partition of unipotent characters is independent of the isogeny type,
and hence there is a unipotent block $b'$ of~$H_0$ whose unipotent characters
correspond to those in $b$, with the same degrees. 
We have $H_0$ is the direct product of groups of the form $K_i:=\bK_i^{F_i}$
where each $\bK_i$ is simple of simply connected type and $F_i$ is some power
of $F$. Let the block $b'$ of $H_0$ be the product of the blocks $c_i$ of $K_i$.

If $\ell\neq 3$ or at least one component of $H_0$
is not of the types in Lemma~\ref{lem:unipdef}(1), then since $b'$ has
non-abelian defect groups, the proof of Proposition~\ref{prop:unip class},
together with \cite[Lemma~2.11]{KM17}, gives that there is a unipotent
character in $b'$ with height~1. Then \cite[Thm~(i)]{CE94} implies that the
corresponding unipotent character of $b$ also has height~1, and thus its Jordan
correspondent in $\tilde B$ is as required.

Now, suppose that $\ell=3$ and $H_0$ has only components of the types in
Lemma~\ref{lem:unipdef}(1) or with $c_i$ of abelian defect. From our assumption,
note that $G$ is not $\SL_n(\eps q)$ in this case. We see from the
classification of quasi-isolated elements \cite[Tab.~II]{Bo05} that when this
occurs, $H_0$ has only one component $K:=K_i$ such that $c:=c_i$ has
non-abelian defect groups. 

Note that $\SL_3(\eps q)$ and $\SL_6(\eps q)$ have a unique unipotent block for
$\ell=3\mid (q-\eps)$. Then $c$ is the principal $3$-block of~$K$. Further,
from the proof of \cite[Prop.~3.17]{MNST}, we see that given an $\ell$-element
$t\in\bH^{*F}$, we have $\wt\chi\in\cE(\wt{G}, \wt{s}t)\cap\Irr(\wt{B})$ if and
only if $\psi\in\cE(H, t)\cap\Irr(b)$, where $\chi$ and $\psi$ map to the same
unipotent character of $\bH^{*F}$ under Jordan decomposition. Note that this
correspondence also preserves heights. From here, we obtain a height-one
character in $H$ as in part~(iv) of the proof of
Proposition~\ref{prop:typeA}. 
\end{proof}

Let $Z\le\bZ(G)$, then as above we say a block of $G/Z$ is quasi-isolated if
any block of $G$ dominating it is quasi-isolated.

\begin{thm}   \label{thm:isolated}
 Let $S$ be a covering group of a simple group of Lie type in characteristic
 $p\ne\ell$, $\si$ an $\ell$-power order field automorphism of~$S$ and
 $A:=S\langle\si\rangle$. Then $\mh(B_A)=1$ for all $\ell$-blocks $B_A$ of $A$
 with non-abelian defect covering quasi-isolated blocks of $S$.
\end{thm}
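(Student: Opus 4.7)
My plan is to combine the results of the previous sections with a $\si$-equivariance argument modelled on those in Propositions~\ref{prop:typeA} and~\ref{prop:isolatedclassicalnofield}.

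By Lemma~\ref{lem:Op'} and the lifting of field automorphisms to simply connected covers (cf.\ the proof of Proposition~\ref{prop:field}), I reduce to the case $A=G\langle\si\rangle$ with $G=\bG^F$ and $\bG$ simple of simply connected type. Let $B$ denote the $\ell$-block of~$G$ covered by $B_A$; by Lemma~\ref{lem:not-invariant} I may assume $B$ is $\si$-stable and of non-abelian defect. If $B$ is unipotent, the conclusion is immediate from Proposition~\ref{prop:unip class} or~\ref{prop:unip exc}. If $\bG$ is of type $\tA$, it follows from Proposition~\ref{prop:typeA} ($\ell\ge3$) or Proposition~\ref{prop:typeA2} ($\ell=2$). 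Otherwise, $B$ is a non-unipotent quasi-isolated block of a non-$\tA$ classical or exceptional group, and inspecting Bonnafé's classification~\cite{Bo05} (together with Proposition~\ref{prop:abelian} to rule out the abelian-defect subcases) reduces me to the situation where $\ell$ is odd.

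I then pursue the strategy of Proposition~\ref{prop:isolatedclassicalnofield}: through a regular embedding $\iota\colon\bG\hookrightarrow\tilde\bG$ I pass to a block $\tilde B$ of $\tilde G:=\tilde\bG^F$ covering $B$ in a series $\cE_\ell(\tilde G,\tilde s)$, and Jordan decomposition transports $\tilde B$ to a unipotent block~$b$ of $H:=C_{\tilde\bG^*}(\tilde s)^F$ with isomorphic defect groups. As in the proof of Proposition~\ref{prop:isolatedclassicalnofield}, $b$ contains a unipotent character of height one, possibly after twisting by an auxiliary $\ell$-element $\tilde t\in\bH^{*F}$ in the $\ell=3$ exceptional cases of Lemma~\ref{lem:unipdef}(1). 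The new task is to ensure that the resulting height-one character of $B$ can be chosen $\si$-stable, for then it extends to a height-one character of $B_A$ with the same height by the same height-preservation that underlies Lemma~\ref{lem:pfactor}.

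For $\si$-stability I combine three facts already exploited in Propositions~\ref{prop:typeA} and~\ref{prop:isolatedclassicalnofield}: the $\si$-orbit on the labelling set of blocks of $\tilde G$ above~$B$ (cosets of the image of~$\iota^*$ in $\bZ(\tilde\bG^*)$) has length prime to~$\ell$, so I may replace $\tilde s$ by a $\si^*$-stable representative; unipotent characters are invariant under all field automorphisms by \cite[Thm~4.5.11]{GM20}; and Jordan decomposition is $\si$-equivariant by \cite[Thm~3.1]{CS13}. In the exceptional $\ell=3$ cases the auxiliary element $\tilde t$ can be chosen $\si$-stable by the Fermat-type observation of Proposition~\ref{prop:typeA}(iv) (using that $\si$ is induced by~$F_{q_0}$ with $q=q_0^{\ell^a}$, so $\ell\mid(q_0^d-\eps)$ whenever $\ell\mid(q^d-\eps)$). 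The hardest step, I expect, will be verifying $\si$-equivariance in the exceptional-type quasi-isolated situations---particularly for $\tE_6,\tE_7,\tE_8$ at $\ell\in\{3,5\}$---where the interaction between the $\si^*$-action on $\bZ(\tilde\bG^*)$ and the labelling of blocks above $B$ must be tracked explicitly; but this is a close analogue of the type-$\tA$ arguments already carried out in Proposition~\ref{prop:typeA}(iv).
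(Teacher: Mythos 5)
Your overall architecture (split into unipotent / type $\tA$ / the rest, then argue $\si$-equivariance) matches the paper's, but there is a concrete gap at the step where you claim that ``inspecting Bonnaf\'e's classification \dots reduces me to the situation where $\ell$ is odd.'' That is only true for the non-$\tA$ classical types, where quasi-isolated elements have $2$-power order. For the exceptional types, non-trivial quasi-isolated elements of odd order (for instance order~$3$ elements in $\tF_4$, $\tE_6$, $\tw2\tE_6$, $\tE_7$, $\tE_8$) do exist, and the corresponding quasi-isolated $2$-blocks are genuine cases of the theorem. The paper has to handle these explicitly: it lists them in Table~\ref{tab:ell=2} and, where the relative Weyl group $W_G(\bL,\la)\cong\fS_2$ has no character of degree divisible by~$2$ exactly once, it constructs a height-one character by choosing a $\si$-stable order-$4$ element $t$ inside a $\tw2\tA_2$-factor of $C_{G^*}(s)$ and taking a character in $\cE(G,st)\cap\Irr(B)$. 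Your proposal simply does not see these cases.

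A second, structural divergence: for the remaining exceptional cases you propose to re-run Proposition~\ref{prop:isolatedclassicalnofield} through a regular embedding and Jordan decomposition to a unipotent block of $H = C_{\tilde\bG^*}(\tilde s)^F$. The paper deliberately does \emph{not} do that for exceptional $\bG$; instead it uses the $e$-cuspidal pair $(\bL,\la)$ parametrising $B$ and finds a character $\phi$ of $W_G(\bL,\la)$ with $\phi(1)_\ell=\ell$ from the tables of \cite{KM13,Ho22,KM23}, then applies \cite[Prop.~6.5]{KMS}. This matters because in the exceptional cases $\bH^*$ can have exceptional-type components (e.g.\ $\tw2\tE_6(q)$ inside $\tE_8(q)$), so Lemma~\ref{lem:unipdef} and the unipotent-block analysis you lean on are not directly of the right shape; your ``Proposition~\ref{prop:isolatedclassicalnofield} strategy'' is established for classical $\bG$ only. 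You would need to redo Lemma~\ref{lem:unipdef}-style bookkeeping for these mixed centralizers before your plan closes. Finally, a small but real omission: the theorem allows $S$ to be an exceptional covering group, and those do not arise as central quotients of $\bG^F$; the paper disposes of them up front via Proposition~\ref{prop:exc cov}, a reduction your proposal skips when you pass immediately to $G=\bG^F$ simply connected.
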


\begin{proof}
If $S$ is an exceptional covering group of $S/\bZ(S)$, then this was shown in
Proposition~\ref{prop:exc cov}. The case $S=\tw2F_4(2)'$ was disposed of in
Proposition~\ref{prop:spor}. Hence, we may exclude these situations, so that
there is a simple simply connected group $\bG$ with Steinberg map $F:\bG\to\bG$
such that $S$ is a central quotient of $G=\bG^F$. Let $B_A$ be an $\ell$-block
of $A$ with non-abelian defect and let $B_S$ the block of $S$ covered by $B_A$.
If $B_S$ is unipotent, our claim is shown in Propositions~\ref{prop:unip class}
and~\ref{prop:unip exc}. So now assume $B_S$ is a quasi-isolated non-unipotent
$\ell$-block of $S$. Thus $B_S$ is dominated by a quasi-isolated block $B$
of~$G$, and $\Irr(B)$ lies in the union of Lusztig series $\cE_\ell(G,s)$ for
some quasi-isolated $\ell'$-element $1\ne s\in G^*$.  Applying the
Fong--Reynolds theorem \cite[Thm~9.14]{N98}, we assume that $B$ is
$A$-invariant, and by Proposition~\ref{prop:typeA}, we may again assume that
$\bG$ is not of type $\tA$.

(i) First, assume $\bG$ is of classical type. Then the only quasi-isolated
elements are of 2-power order (see e.g.\ \cite[Tab.~2]{Ma17}).
These give rise to quasi-isolated blocks for primes $\ell>2$. As $|\bZ(G)|$ is
a 2-power, any $\ell$-block of $S$ is an $\ell$-block of $G$ with isomorphic
defect groups, so we may assume $S=G$. Further, in this case, we may assume
$A\neq S$, by Proposition~\ref{prop:isolatedclassicalnofield}, and that $B$ has
non-abelian defect groups by Proposition~\ref{prop:abelian}. 

Keep the notation of the proof of
Proposition~\ref{prop:isolatedclassicalnofield}. We lift $\si$ to a field
automorphism of~$G$ and then extend it to $\wt{G}$. Let $\wt{B}$ be an
$\ell$-block of $\wt{G}$ above~$B$. From the description of isolated elements in
\cite{Bo05}, we see the conjugacy class of $\wt{s}$ is $\si$-stable. Since $B$
has non-abelian defect groups, note that $\wt{B}$, and hence the unipotent
block $b$ of $H$ from Proposition~\ref{prop:isolatedclassicalnofield}, also has
non-abelian defect groups. Now, the unipotent $\ell$-blocks of any component
of~$H_0$ of type $\tA_1$ have abelian defect groups. Hence our assumption that
$B$ has non-abelian defect ensures that not all components of $\bH^\ast$ can be
of type $\tA_1$ (hence, nor of type $\tD_2$).
Since $\bG$ is type $\tB$, $\tC$, or $\tD$, we see from \cite{Bo05} that
if $\bH^\ast$ has more than two components that are isomorphic to one another
(allowing additional components not isomorphic to those), then those are of
type $\tA_1$ and there is one additional component, say $K$, that is not
isomorphic to these. Note then that the corresponding block of $K$ has
non-abelian defect groups, again from our assumption that $B$ has non-abelian
defect groups and is stable under $\si$. If at most two components are
isomorphic, then $\si$ preserves all components since it has odd order.

Now, by \cite[Thm~3.1]{CS13}, there is a $\si$-equivariant Jordan decomposition
for~$\wt{G}$. Since we are in the cases of $\tB, \tC,$ and $\tD$, the
unipotent characters of $\bH^{*F}$ are stable under field automorphisms (see
\cite[Thm~4.5.11]{GM20}), so we see the height-one character in $\wt{B}$, say
$\wt\chi$, from before can be chosen to be also $\si$-invariant.

Now, since $[\wt{G}:G\bZ(\wt{G})]$ is $\ell'$, we see there must exist a
character of the corresponding block of $G\bZ({\wt{G}})$ above $B$ invariant
under~$\si$. Then the irreducible restriction $\chi$ of that character in $B_S$
is invariant under $\si$ as well. Hence the extension of $\chi$ to $A$ lies in
$B_A$ and has height one, yielding $\mh(B_A)=1$.

(ii) Now assume $S$ is of exceptional type. In \cite[Prop.~4.1]{BM15} we had
shown the claim for $S$ of rank less than~4 in the case when $A=S$.
We first consider the case that $S=G$ or $\ell$ does not divide $|\bZ(G)|$, so
that $B_S$ has the same invariants as $B$.

If the defect groups of $B$ are abelian, we conclude by
Proposition~\ref{prop:abelian} when $\ell\ge3$, while, by inspection of the
relevant tables in \cite{KM13} all non-unipotent quasi-isolated 2-blocks of $S$
have non-abelian defect. Now note that the Sylow $\ell$-subgroups of $G$
(and hence any defect groups) can only be non-abelian if $\ell$ divides the
order of the Weyl group of $G$ (see \cite[Thm~25.14]{MT}), which we now assume.
If $G$ is a Suzuki or Ree group, this only leaves the case $G=\tw2F_4(q^2)$
with $q=2^{2f+1}$ and $\ell=3$. But this group has no quasi-isolated elements
of order larger than~3.

So we may assume $F$ is a Frobenius map. Then our assumptions imply in
particular that $e=e_\ell(q)\in\{1,2,3,4,6\}$. Let $s\in G^*$ be an
isolated $\ell'$-element such that $\Irr(B)\subset\cE_\ell(G,s)$. Then by
\cite[Thm~1.2]{KM13} and \cite[Thm~1.1]{Ho22} the block $B$ is labelled by an
$e$-cuspidal pair $(\bL,\la)$ of quasi-central $\ell$-defect and
$\Irr(B)\cap\cE(G,s)$ is a union of $e$-Harish-Chandra series which satisfy an
$e$-Harish-Chandra theory. Furthermore, the defect groups of $B$ are
non-abelian only if $\ell$ divides the order of the relative Weyl group
$W_G(\bL,\la)$. The $W_G(\bL,\la)$ can be found in the tables of
\cite{KM13,Ho22} and \cite[\S6.2]{KM23}. It transpires that $|W_G(\bL,\la)|$ is
prime to~7 when $e\in\{3,6\}$, and it is prime to~5 when $e=4$ except when
$C_{G^*}(s)\cong A_4(q^2)$ for $G=\tE_8(q)$, but in this latter case $s$ is a
5-element, contrary to assumption. So now we have $e\in\{1,2\}$.
Going through the list of cases, we see that whenever $|W_G(\bL,\la)|$ is
divisible by~$\ell\in\{3,5,7\}$, then $W_G(\bL,\la)$ possesses a character of
degree divisible by~$\ell$ exactly once. (Note here that the first three lines
of Table~5 in \cite{Ho22} contain several obvious misprints.) By
\cite[Prop.~6.5]{KMS} the corresponding
character $\chi\in\cE(G,s)\cap\Irr(B)$ also has height~1.  For $\ell=2$ the
same argument applies except for the blocks listed in Table~\ref{tab:ell=2}
(the cases with $e_2(q)=1$) and their Ennola duals, which all arise for
elements $s$ of order~3, with $3$ dividing $q+1$.

\begin{table}[htb]
\caption{Some isolated $2$-blocks in exceptional groups, $e_2(q)=1$}   \label{tab:ell=2}
$$\begin{array}{c|rccccc}
 G& C_{G^*}(s)& \bL^F& C_{\bL^*}(s)^F& \la& W_G(\bL,\la)\\
\hline
 \tF_4(q)& \tw2\tA_2(q)^2& \Phi_1^2\tA_1(q)^2& \Phi_1^2\Phi_2^2& 1& A_1\times A_1\\
 \tw2\tE_6(q)& \tw2\tA_2(q)^3.3& \Phi_1^3\Phi_2^2\tA_1(q)& \Phi_1^3\Phi_2^3& 1& A_1\wr3\\
       & \tw2\tA_2(q^3).3& \Phi_1\tA_2(q^2)\tA_1(q)& \Phi_1\Phi_2\Phi_3\Phi_6.3& 1& A_1\\
 \tE_8(q)& \tw2\tE_6(q).\tw2\tA_2(q)& \Phi_1\tE_7(q)& \Phi_1\Phi_2\tw2\tE_6(q)& \tw2E_6[\theta^{\pm1}]& A_1\\
\end{array}$$
\end{table}

We discuss the four types of blocks from Table~\ref{tab:ell=2}. For this, we
note that
when $q\equiv1\pmod4$ then any group isogenous to $\SU_3(q)$ possesses a unique
class of elements of order~4, lying in maximal tori of order $q^2-1$. In
particular, this class is invariant under all automorphisms. So in all cases we
can choose an element $t\in C_{G^*}(s)$ of order~4 which is as above in one of
the $\tw2A_2$-factors of $C_{G^*}(s)$ and trivial in all other factors of
$C_{G^*}(s)$ and whose conjugacy class is invariant under all automorphisms
of $C_{G^*}(s)$, except for the automorphisms interchanging the two factors in
type $\tF_4$. But note that here the two factors are generated by long
respectively short root elements, so cannot be interchanged by any automorphism
of $G$ (as $q$ is odd). By \cite[Lemma~3.13]{MNST} there is
$\chi\in\cE(G,st)\cap\Irr(B)$, and the degree formula shows $\chi$ is of
height~1.

All blocks considered here are stable under field automorphisms of $G$, since
the classes of the parametrising isolated elements are. Furthermore, the chosen
characters $\chi\in\Irr(B)$ of height~1 are $\si$-invariant since, as
pointed out above, the class of $t$ is fixed by $\si$. Thus any character
of~$A$ extending $\chi$ will be as desired. The Ennola dual blocks can be
treated by exactly the same argument.

If $S\ne G$ then $G$ necessarily is of type $E_6$ with $\ell=3$ or $E_7$ with
$\ell=2$, $S=G/\bZ(G)$ and $\Irr(B_S)\supseteq\cE(G,s)\cap\Irr(B)$ by deflation.
So for all cases above where we chose $\chi\in\cE(G,s)\cap\Irr(B)$, the centre
of $G$ is in the kernel of $\chi$, so considering $\chi$ as a character of $S$
we conclude as before. In the remaining cases, we have $\ell=2$ and $G$ is of
type $E_6$. Note that here $|\bZ(G)|=3$ is prime to $\ell$, a case we
already discussed.
\end{proof}

We now obtain (a stronger version of) Theorem \ref{thm:qiIntro} on
quasi-isolated blocks of quasi-simple groups:

\begin{thm}   \label{thm:qi diag+fld}
 Let $\ell\ge3$ and $A$ such that $S:=\bF^*(A)$ is quasi-simple of Lie type in
 characteristic $p\ne\ell$. Further, assume that $A$ only involves field
 automorphisms if $S=\tD_4(q)$ and $\ell=3$. Then any $\ell$-block $B$ of $A$
 of non-abelian defect covering a quasi-isolated block of $S$ satisfies
 $\mh(B)=1$.
\end{thm}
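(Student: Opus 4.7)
By Lemma \ref{lem:Op'}, we may assume $A = O^{\ell'}(A)$, so that $A/S$ is an $\ell$-group. The case $A = S$ is covered by Propositions \ref{prop:typeA}, \ref{prop:typeA2}, \ref{prop:isolatedclassicalnofield} and the $A=S$ portion of the proof of Theorem \ref{thm:isolated}; the case where $A/S$ consists only of field automorphisms is Theorem \ref{thm:isolated}; and type $\tA$ is entirely settled by Proposition \ref{prop:typeA}. Hence we may assume $A > S$, that $S$ is not of type $\tA$, and that $A/S$ contains some $\ell$-element inducing a non-field outer automorphism of $S$. For every type of $S$ other than $\tA$, $\tD_4$ (with $\ell = 3$), and $\tE_6$ (with $\ell = 3$), the diagonal and graph automorphism groups of $S$ contain no nontrivial $\ell$-element, so $A/S$ would have to be a group of field automorphisms, contradicting our hypothesis. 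Thus only two configurations remain: (i) $\ell = 3$ and $S$ is a central quotient of $\tE_6(\eps q)_\SC$ with $3 \mid (q - \eps)$, where $\eps \in \{\pm 1\}$, and $A/S$ involves the order-$3$ diagonal automorphism; (ii) $\ell = 3$, $S$ is a central quotient of the simply connected cover of $\tD_4(q)$, and $A/S$ contains the triality graph automorphism (and possibly a $3$-power order field automorphism).

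In both cases the plan mirrors the proofs of Theorem \ref{thm:isolated} and Proposition \ref{prop:isolatedclassicalnofield}. After a Fong--Reynolds reduction \cite[Thm~9.14]{N98}, we may assume the block $B_S$ of $S$ covered by $B$ is $A$-invariant. Passing to a regular embedding $\bG \hookrightarrow \wt{\bG}$ and writing $\wt{G} := \wt{\bG}^F$, let $\wt{B}$ be the block of $\wt{G}$ covering (the lift of) $B_S$, so that $\Irr(\wt{B}) \subseteq \cE_\ell(\wt{G}, \wt{s})$ for some quasi-isolated $\ell'$-element $\wt{s} \in \wt{G}^*$. Jordan decomposition then gives a height-preserving bijection between $\Irr(\wt{B}) \cap \cE(\wt{G}, \wt{s})$ and the unipotent characters in a unipotent block $b$ of $H := C_{\wt{\bG}^*}(\wt{s})^F$, with $H$ a product of classical-type groups in each case. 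I would invoke Lemma \ref{lem:unipdef} to produce a unipotent character in $b$ of height one, secure $A$-stability through the $A$-equivariance of Jordan decomposition \cite[Thm~3.1]{CS13}, and descend via Clifford theory (using that $[\wt{G}:G\bZ(\wt{G})]$ is prime to $\ell$ and that the character can be arranged to be trivial on the relevant central $\ell$-subgroup, as in the proof of Proposition \ref{prop:typeA}) to obtain a height-one character of $B$, showing $\mh(B) = 1$.

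The main obstacle is case (ii): triality acts non-trivially both on the set of quasi-isolated conjugacy classes of $\wt{\bG}^*$ and on the components of certain centralizers $H$, so one must choose $\wt{s}$ within a triality-stable $\wt{G}^*$-orbit (using the explicit classification of quasi-isolated elements in \cite{Bo05}) and then locate a height-one unipotent character of $b$ lying in the triality fixed points, relying on the rationality of unipotent characters of classical-type groups \cite[Thm~4.5.11]{GM20}. A secondary technical point, shared with case (i), is that the diagonal and graph-induced automorphism actions on Lusztig series correspond by duality to tensoring with linear characters of $\wt{G}/G$, so one must ensure the class of $\wt{s}$ is invariant under the relevant twist, which can be arranged by replacing $\wt{s}$ with $\wt{s}z$ for a suitable $z \in \bZ(\wt{\bG}^*)^F$, exactly as in the argument for Theorem \ref{thm:isolated}.
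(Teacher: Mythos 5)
Your proposal has two genuine problems.

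First, you have misread the hypothesis ``assume that $A$ only involves field automorphisms if $S=\tD_4(q)$ and $\ell=3$.'' This conditional \emph{excludes} the triality case: when $S=\tD_4(q)$ and $\ell=3$, the theorem assumes $A/S$ consists of field automorphisms only, so your ``case~(ii)'' simply does not occur under the hypotheses. It is not a case to be proved — and in fact it \emph{cannot} be handled by the methods here: the remark following Theorem~\ref{thm:qi diag+fld} in the paper explicitly records that for $\tD_4(q)$ with triality of $3$-power order, certain quasi-isolated $3$-blocks (with $C_{G^*}(s)=A_1(q)^4.2^2$, etc.) remain unresolved. Your claim that you ``would locate a height-one unipotent character of $b$ lying in the triality fixed points, relying on the rationality of unipotent characters'' is thus not merely a sketch but a claim the authors could not establish; it should give you pause that you believe this routine.

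Second, even confining attention to the $\tE_6(\eps q)$ case with $\ell=3\mid(q-\eps)$, your plan to ``invoke Lemma~\ref{lem:unipdef} to produce a unipotent character in $b$ of height one'' breaks down for precisely the blocks that arise. The second unipotent $3$-block of $\tE_6(\eps q)_\SC$ is labelled by an $e$-cuspidal pair whose relative Weyl group is $\fS_3$, which is exactly exception~(1) of Lemma~\ref{lem:unipdef} — $\fS_3$ has no irreducible character of degree exactly divisible by~$3$ — so no height-one unipotent character exists in that block. The paper instead produces a character of $G$ of height~$0$ in the non-unipotent Lusztig series $\cE(G,t)$ for a quasi-isolated $t\in G^*$ of order~$3$ with $C_{G^*}(t)=\tD_4(q)(q-1)^2.3$, and then observes that its lift $\tchi\in\cE(\tiG,\tilde t)$ has height~$1$ because $C_{\bG^*}(t)$ is disconnected. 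A similar separate argument (via \cite[Tab.~3]{KM13}, not Lemma~\ref{lem:unipdef}) is needed for the isolated block with $C_{G^*}(s)$ of type $\tA_5\tA_1$, since a direct Jordan reduction would again land in an exceptional type-$\tA$ factor. The paper's proof treats the three quasi-isolated $3$-blocks of $\tE_6(\eps q)$ (principal, second unipotent, $\tA_5\tA_1$-isolated) by three distinct arguments; your uniform ``Jordan decomposition plus Lemma~\ref{lem:unipdef}'' scheme cannot replicate this.
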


\begin{proof}
By Lemma~\ref{lem:Op'} we may assume $A=\bO^{\ell'}(A)$. So $A$ is generated
over $S:=\bF^*(A)$ by outer automorphisms of $\ell$-power order. Our conditions
on $\ell$ mean that these are field automorphisms except possibly if
$S=\tA_n(\eps q)$  or $S=\tE_6(\eps q)$ or $\tD_4(q)$ with $\ell=3$. (Note that
in the remaining cases, field automorphisms of $\ell$-power order necessarily
commute with the diagonal automorphisms.) Let $B$ be an $\ell$-block of $A$ of
non-abelian defect covering a quasi-isolated block of~$S$.

As usual, let $G=\bG^F$ be quasi-simple with $S=G/Z$ for some $Z\le\bZ(G)$.
(The group $S=\tw2\tF_4(2)'$ was considered in Proposition~\ref{prop:spor}.)
Let $B_S$ be a block of $S$ covered by $B$ and $B_G$ a block of $G$
dominating~$B_S$. Note that $B_S$ and $B_G$ determine one another uniquely.
Further, applying \cite[Thms~9.9(c) and~9.14]{N98}, it suffices
to assume that~$Z$ is an $\ell$-group and that $B_S$ is $A$-invariant.
If $A$ only involves field automorphisms, then $\mh(B)=1$ by
Propositions~\ref{prop:unip class}, \ref{prop:unip exc} and
Theorem~\ref{thm:isolated}.

So assume $A$ induces not only field automorphisms. If $S$ is a central
quotient of $\tA_n(\eps q)$, our claim is in
Proposition~\ref{prop:typeA}. Now assume $S$ is a central quotient of
$G=\tE_6(\eps q)$ with $\ell=3\mid(q-\eps)$. Let $\bG\to\tilde\bG$ be a regular
embedding and $\si$ a field automorphism of~$\tiG$ (and hence of $G$ and
$S$) of 3-power order such that $A$ is a quotient of a subgroup of
$\tiG\langle\si\rangle$. By the relevant tables
in \cite{En00,KM13} the only quasi-isolated $3$-blocks $B_G$ of $G$ to consider
are the principal block, the second unipotent block, labelled by the
$e$-cuspidal character of an $e$-split Levi of type $\tD_4$ where $e=e_3(q)$,
and the isolated block labelled by an involution $s\in G^*$ with centraliser of
type $\tA_5\tA_1$. The principal block has been handled in \cite[Thm~3.5]{MMR}.
Now assume $B_G$ is the isolated block. Since there is a unique class of
involutions in $G^*$ with centraliser $\tA_5\tA_1$, the block $B_G$ is
$\si$-invariant, and then so is $\cE(G,s)$. Let $\tilde s\in\tilde\bG^{*F}$ the
involution with image~$s$. Then by the same argument $\cE(\tiG,\tilde s)$
is $\si$-stable. Now $\cE(\tiG,\tilde s)$ contains a character $\chi$ of
height~1 (labelled by a character of height~1 in the relative Weyl group, see
\cite[Tab.~3]{KM13}), uniquely determined
by its degree, with $\bZ(\tiG)$ in its kernel, and with irreducible
restriction to $G$ (since $C_{\bG^*}(s)$ is connected). Thus $\chi$, $\chi|_G$,
and their deflations to $\tiG/\bZ(\tiG)$, $S$ respectively, are
$\si$-stable and so extend to yield a character of $\tiG\langle\si\rangle$
and thus also of $A$ of height~1.

Finally, assume $B_A$ is the non-principal unipotent 3-block of $G$ with
$q\equiv\eps\pmod3$. So, $B_G$ is labelled by the $e$-cuspidal pair $(\bL,\la)$
where $\bL$ has type $\tD_4$ and $e=(3-\eps)/2$. Let $t\in G^*$ be
quasi-isolated of order~3 with centraliser $C_{G^*}(t)=\tD_4(q)(q-1)^2.3$. Then
$t\in [G^*,G^*]$ and hence the characters in $\cE(G,t)$ have $Z\le\bZ(G)$ in
their kernel. Moreover, $t$ has a preimage $\tilde t\in\tiG^*$ of order~3.
Let $\chi$ be an irreducible constituent of $R_L^G(\hat t\la)$, with~$\hat t$
in duality with $t$. Then $\chi\in\cE(G,t)\cap\Irr(B_G)$ is of height~0 (see
also the discussion in the proof of \cite[Lemma~2.14]{KM17}). Since there is a
unique class of quasi-isolated elements in $G^*$ with the given centraliser,
the $\tiG$-orbit of $\chi$ is $\si$-invariant. As $C_{\bG^*}(t)$ is not
connected, $\tchi\in\cE(\tiG,\tilde t)$ has height~1, has $\bZ(G)$ in its
kernel and is $\si$-invariant as again there is a unique class of elements of
order~3 in $\tiG^*$ with centraliser $\tD_4(q).(q-1)^2$. So its deflation to
$\tiG/Z$ extends to $\tiG/Z.\langle\si\rangle$ and provides a character in
$B_A$ as required.
\end{proof}

\begin{rem}
In the case of $G=\tD_4(q)$ with $\ell=3$ and $A$ involving a graph or
graph-field automorphism of 3-power order, the only unipotent 3-blocks of $G$
are the principal block, dealt with in \cite[Thm~3.5]{MMR} and a block of
defect~0. By \cite[Tab.~2]{Ma17} there are five further types of quasi-isolated
3-blocks of $G$, whose irreducible characters coincide with $\cE_3(G,s)$ for
suitable 2-elements $s\in G^*$. For three of these, $s$ are involutions
whose classes are cyclically permuted by the graph automorphism of order~3 but
fixed by field automorphisms, so the corresponding blocks are not $A$-stable,
so also satisfy the conclusion of Theorem~\ref{thm:qi diag+fld}.
The two remaining blocks have $C_{\bG^*}(s)$ of type $\tA_1^4.\tA_1(q)^4.2^2$
and $C_{\bG^*}(s)=A_1(q)T.2^2$ (see e.g.~\cite[Tab.~2]{Ma17}). In both cases,
there are four rational such classes, three of which are permuted transitively
by the graph automorphism, so the corresponding blocks are not $A$-stable.
For the fourth class we have $C_{G^*}(s)=A_1(q)^4.2^2$, respectively
$C_{G^*}(s)=A_1(q)(q-\eps)^2(q+\eps).2^2$ with $q\equiv\eps\pmod4$, a case
we have not been able to resolve.
\end{rem}

\begin{thm}   \label{thm:diag+fld}
 Let $\ell\ge3$ and $A$ such that $\bF^*(A)$ is quasi-simple of Lie type in
 characteristic $p\ne\ell$. Further, assume that $A$ only involves field
 automorphisms if $S=\tD_4(q)$ or $\tE_6(\eps q)$ and $\ell=3$. Then the
 $\ell$-blocks of $A$ are not a minimal counter-example to the direction
 $\mh(B)\le\mh(D)$ of Conjecture~\ref{conj}.
\end{thm}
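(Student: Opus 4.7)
The plan is to reduce the statement to Theorem~\ref{thm:qi diag+fld} by means of an equivariant Bonnaf\'e--Dat--Rouquier Morita equivalence, arguing that a minimal counter-example cannot occur.

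Suppose for contradiction that $B$ is an $\ell$-block of a group $A$ satisfying the hypotheses, minimal (in $|A|$) among counter-examples to $\mh(B)\le\mh(D)$. By Lemma~\ref{lem:Op'} I may assume $A=\bO^{\ell'}(A)$, so that $A/S$ is an $\ell$-group, where $S=\bF^*(A)$. Write $S=G/Z$ with $G=\bG^F$ quasi-simple and $Z\le\bZ(G)$. By Fong--Reynolds (Lemma~\ref{lem:not-invariant}), I may further assume that the block $B_S$ of $S$ covered by $B$ is $A$-invariant; let $B_G$ be the block of $G$ dominating $B_S$, so that $\Irr(B_G)\subseteq\cE_\ell(G,s)$ for a semisimple $\ell'$-element $s\in G^*$.

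If $s$ is quasi-isolated in $\bG^*$, then Theorem~\ref{thm:qi diag+fld} gives $\mh(B)=1\le\mh(D)$ whenever $D$ is non-abelian, while $\mh(B)=\infty=\mh(D)$ if $D$ is abelian by the now-proven Brauer height zero conjecture; either way $B$ is not a counter-example. So I assume $s$ is \emph{not} quasi-isolated; then $C_{\bG^*}(s)$ lies in a proper $F$-stable Levi subgroup $\bL^*<\bG^*$ of~$\bG^*$, with dual Levi $\bL<\bG$.

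The Bonnaf\'e--Rouquier theorem, in its $\ell$-modular form due to Bonnaf\'e--Dat--Rouquier, yields a Morita equivalence between $B_G$ and an $\ell$-block $b_L$ of $L:=\bL^F$ (in which $s$, viewed in $\bL^*$, is now quasi-isolated). This equivalence is induced by an explicit bimodule, preserves defect groups up to isomorphism, and preserves heights of irreducible characters. To exploit this I need an equivariant refinement compatible with the $A/S$-action: by work of Ruhstorfer (and Ruhstorfer--Sp\"ath) one may arrange $\bL$ to be stable under lifts to $G$ of the relevant outer automorphisms of $\ell$-power order, and the Morita equivalence extends to one between $B$ and an $\ell$-block $\hat B$ of a suitable extension $\hat A$ of (a central quotient of) $[L,L]$; Lemma~\ref{lem:centralprod} absorbs the central torus factor. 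Then $\mh(\hat B)=\mh(B)$ and the defect groups of $\hat B$ are isomorphic to those of $B$.

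Since $\bL$ is a proper Levi, $|\hat A|<|A|$, and $\hat B$ now covers a quasi-isolated block of $\bF^*(\hat A)$. Hence either $\hat B$ is handled directly by Theorem~\ref{thm:qi diag+fld}, or (if $\bF^*(\hat A)$ has multiple quasi-simple components coming from the components of $[\bL,\bL]$) by the analogous statement applied componentwise, yielding $\mh(\hat B)\le\mh(\hat D)$ and contradicting minimality. The main obstacle is this equivariance step: one must verify that the Bonnaf\'e--Dat--Rouquier bimodule can be chosen to intertwine the $\ell$-power-order diagonal and field automorphisms appearing in $A/S$, that $\bL$ can be taken $A/S$-stable, and that the resulting group $\hat A$ still falls under the restricted class of automorphism types permitted by the hypothesis (in particular that the reduction does not produce a $\tD_4$- or $\tE_6(\eps q)$-with-$\ell=3$ situation requiring a graph or graph-field automorphism).
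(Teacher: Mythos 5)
Your overall strategy matches the paper's: reduce via the Bonnaf\'e--(Dat--)Rouquier Morita equivalence to a quasi-isolated block of a proper Levi and invoke minimality. But you flag the equivariance step as an unresolved ``obstacle'', and this is where your write-up has a genuine gap, because the paper avoids the concern you raise rather than confronting it. The key move you are missing is to first dispose of \emph{all} cases where diagonal automorphisms can appear: for $S$ a central quotient of $\tA_n(\eps q)$ everything is already covered by Proposition~\ref{prop:typeA}, and the hypotheses of the theorem (restricting $\tD_4(q)$ and $\tE_6(\eps q)$ at $\ell=3$ to field automorphisms only) eliminate the remaining possibilities of diagonal or graph-type automorphisms of $\ell$-power order. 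After this reduction $A/S$ is generated by a single field automorphism $\si$. At that point no deep equivariance theorem of Ruhstorfer type is needed: since the $G^*$-class of $s$ is $\si^*$-stable and $\bL^*$ is the \emph{unique} minimal Levi containing $C_{\bG^*}(s)$, one can take $\bL^*$, $\bL$ and $L$ to be $\si$-stable, and the Lusztig induction $R_L^G$ realising the Morita equivalence is equivariant under field automorphisms by \cite[Cor.~3.3.14]{GM20}. Without first killing the diagonal automorphisms, your appeal to an ``equivariant refinement'' is not backed up and, as you yourself note, remains an obstacle.

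A second, smaller point: you worry at the end whether the reduced group $\hat A$ ``still falls under the restricted class of automorphism types permitted by the hypothesis.'' This is not required. Minimality of a counter-example to $\mh(B)\le\mh(D)$ is taken over all finite groups, so it is enough that the Morita equivalence transports heights and defect groups to a block of a strictly smaller group $L\langle\si\rangle/Z$; whether that group satisfies the statement's hypotheses is irrelevant. The paper simply observes that $B$ and the block above $b_L$ in $L\langle\si\rangle$ share a defect group, and the height-preserving $\si$-equivariant bijection on characters trivial on $Z$ transfers $\mh$-data, contradicting minimality. Your extra condition would needlessly complicate the argument.
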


\begin{proof}
As before, by Lemma~\ref{lem:Op'} we may assume $A=\bO^{\ell'}(A)$, so $A$ is
generated over $S:=\bF^*(A)$ by outer automorphisms of $\ell$-power order.
Our conditions on $\ell$ mean that these are diagonal-field automorphisms.
Moreover, our assumptions ensure that diagonal automorphisms are only involved
when $S$ is a central quotient of $\tA_n(\eps q)$, in which case the
$\ell$-blocks with non-abelian defect are not minimal counter-examples by
Proposition~\ref{prop:typeA}.

Let $B$ be an $\ell$-block of $A$. If $B$ has abelian defect, the claim
certainly holds by Brauer's height zero conjecture. So assume $B$ has
non-abelian defect.

As $S=\tw2\tF_4(2)'$ was considered in Proposition~\ref{prop:spor}, we let
$G=\bG^F$ be quasi-simple with $S=G/Z$ for $Z\le\bZ(G)$,  $B_S$ a block of $S$
covered by $B$ and $B_G$ a block of $G$ dominating~$B_S$. Again, $B_S$ and
$B_G$ determine one another uniquely. Applying \cite[Thm~9.9(c) and~9.14]{N98},
it suffices to assume that~$Z$ is an $\ell$-group and that $B_S$ is
$A$-invariant. By Theorem~\ref{thm:qi diag+fld}, $B_S$ is not quasi-isolated. 

Recall that we have reduced to the case $A$ only involves field automorphisms.
Let $s\in G^*$
be an $\ell'$-element such that $\Irr(B)\subseteq\cE_\ell(G, s)$ and let
$\bL^*\lneq\bG^*$ be the (unique) Levi subgroup minimal with respect to the
property that $C_{\bG^*}(s)\leq \bL^*$, so that $s$ is quasi-isolated in
$\bL^\ast$. Since $B_S$ (and hence $B_G$) is $A/S$-stable, the class of $s$ is
also $A/S$-stable. Then after multiplying by a suitable inner automorphism, we
may choose $\si$ generating $A/S$ such that $C_{G^*}(s)$ is $\si^*$-stable,
where $\si^*$ is the field automorphism of $G^*$ dual to~$\si$. Then our
construction forces $\bL^\ast$ to be $\si^\ast$-stable as well, and hence $\bL$
and $L=\bL^F$ can be taken $\si$-stable. By the Bonnaf\'e--Rouquier reduction
theorem \cite[Thm~B]{BR}, $B_G$ is then Morita equivalent to a block $B_L$
of the strictly smaller group $L$, and $B_L$ and $B_G$ share a defect
group~$D_G$ by \cite[Thm~7.7]{BDR}. More specifically, this Morita equivalence
is given by Lusztig's twisted induction $R_L^G$, which preserves heights, is
equivariant under field automorphisms by \cite[Cor.~3.3.14]{GM20}, and
preserves Lusztig series.

Let $b_L$ be the block of $L/Z$ dominated by $B_L$. Note then that $B_S$ and
$b_L$ share the defect group $D_GZ/Z$.
Since $Z$ is central, the characters of $B_S$ and $b_L$ are exactly those
in~$B_G$, respectively $B_L$, that are trivial on $Z$. Since $R_L^G$ preserves
Lusztig series and all characters in the same Lusztig series lie above the same
character of $\bZ(G)$ (see \cite[Lemma~2.2]{Ma07}), we see that our bijection
also induces a height-preserving bijection from $\Irr(B_S)$ to $\Irr(b_L)$.
Hence as $A/S$ is cyclic, involving only field automorphisms, it follows 
that $B$ cannot be a minimal counter-example, noting that $B$ and the block
above $b_L$ in $L\langle\si\rangle$ share a defect group.
\end{proof}

With this and the results in the previous sections, we obtain
Theorem~\ref{thm:aut}. Further, our methods have also proved
Theorem~\ref{thm:quasi}:

\begin{cor}   \label{cor:nondefnoauts}
 Theorems~\ref{thm:quasi} and~\ref{thm:aut} from the Introduction hold.
\end{cor}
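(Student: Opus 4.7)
The plan is to assemble the pieces already established. A first step, used in both theorems, is to invoke Lemma~\ref{lem:Op'} to reduce to the case $A=O^{p'}(A)$, since passing to a $p'$-overgroup preserves both the minimal positive height and the defect group, hence preserves the property of being a minimal counter-example.

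For Theorem~\ref{thm:quasi}, with $\bF^*(A)=A=S$ quasi-simple, I would proceed by cases on the isomorphism type of $S/\bZ(S)$. Covering groups of alternating groups are handled by Proposition~\ref{prop:alt} together with \cite[Thm~2.1]{BM15}; covering groups of sporadic simple groups and of $\tw2\tF_4(2)'$ by Proposition~\ref{prop:spor}; Lie-type groups in defining characteristic by Theorem~\ref{thm:defchar}. In non-defining characteristic, quasi-isolated blocks are covered by Theorem~\ref{thm:isolated} (which gives the stronger statement $\mh(B)=1$ under those hypotheses, and in particular $\mh(B)\le\mh(D)$); for non-quasi-isolated blocks, I would run the Bonnafé--Rouquier Morita equivalence argument that appears in the proof of Theorem~\ref{thm:diag+fld}: the block $B_G$ is Morita equivalent, via $R_L^G$, to a block of a proper Levi subgroup $L=\bL^F$ sharing a defect group with $B_G$, and the Morita equivalence is height-preserving. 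Hence any putative counter-example at the quasi-simple level would force a smaller counter-example inside $L$, so $B$ cannot be minimal.

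For Theorem~\ref{thm:aut} the hypothesis $p\ge 5$ restricts outer automorphisms of $p$-power order drastically. By \cite[Thm~24.24]{MT}, graph and graph-field automorphisms have order~$2$ or~$3$, so for $p\ge 5$ the relevant outer automorphisms are only field automorphisms and, when $\bF^*(A)$ is a central quotient of $\SL_n(\eps q)$, possibly diagonal automorphisms. The alternating case uses $\Aut(\fA_n)=\fS_n$ for $n\ne 6$ together with Proposition~\ref{prop:alt}; sporadic groups and $\tw2\tF_4(2)'$ extensions are in Proposition~\ref{prop:spor}; defining characteristic is Theorem~\ref{thm:defchar}. In non-defining characteristic, type $\tA$ (where diagonal automorphisms may appear) is handled by Proposition~\ref{prop:typeA} together with Proposition~\ref{prop:unip class} and Theorem~\ref{thm:isolated}; for all remaining types, $A/S$ is generated by field automorphisms, so the hypotheses of Theorem~\ref{thm:diag+fld} are met (the $\tD_4$ and $\tE_6$ exceptions there only concern $\ell=3$, so are vacuous for $p\ge 5$). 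The one conceptual obstacle is ensuring that the Bonnafé--Rouquier reduction used inside Theorem~\ref{thm:diag+fld} is $\si$-equivariant and defect-group preserving when $A>S$, but this is precisely what \cite[Cor.~3.3.14]{GM20} and \cite[Thm~7.7]{BDR} supply in that proof, so nothing new is required here.
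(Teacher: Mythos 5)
Your proposal is correct and follows essentially the same approach as the paper's own proof: reduce to $A=O^{p'}(A)$, deal with the non-Lie-type and defining-characteristic cases by citing Section~3 and Theorem~\ref{thm:defchar}, and in non-defining characteristic combine Theorem~\ref{thm:isolated} (quasi-isolated blocks) with the Bonnaf\'e--Rouquier reduction from the proof of Theorem~\ref{thm:diag+fld} (non-quasi-isolated blocks), while for Theorem~\ref{thm:aut} the restriction $p\ge5$ makes the extra hypotheses of Theorem~\ref{thm:diag+fld} vacuous. The only stylistic difference is that the paper cites Theorem~\ref{thm:diag+fld} wholesale for $\ell\ge3$ (including $A=S$) and then explicitly isolates the remaining $\ell=2$ quasi-simple case, whereas you run the quasi-isolated/Bonnaf\'e--Rouquier split uniformly for all $\ell$; both routes are sound.
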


\begin{proof}
Thanks to Theorem~\ref{thm:diag+fld}, together with Theorem~\ref{thm:defchar}, 
and the results and references in Section \ref{sec:nonlie}, we see that it now
suffices to show that when $\ell=2$, any $2$-block $B$ of a quasi-simple group
$S$ of Lie type defined in odd characteristic $p$ is not a minimal
counter-example to $\mh(B)\leq \mh(D)$. 

Now, we may assume by the established Brauer's height zero conjecture for
quasi-simple groups \cite{KM17} that $D$ is non-abelian, and by
Theorem~\ref{thm:isolated} that the block $B$ is not quasi-isolated. From here,
the same argument as in the proof of Theorem~\ref{thm:diag+fld}, using
Bonnaf{\'e}--Rouquier's reduction theorem, yields the result.
\end{proof}

\begin{rem}
We close by noting that for a block $B$ of a finite group $G$ with defect
group $D$, it is not in general the case that the number of characters of the
smallest non-zero height for $B$ and for $N_G(D)$ would agree (while for
characters of height zero this is postulated by the Alperin--McKay conjecture).
An example is given by the principal 2-block $B$ of $G=M_{12}$, where
$N_G(D)=D$ has six characters of height~1 while $B$ only has two.
\end{rem}


\end{document}